\def\R{\mathbb{R}}
\def\C{\mathbb{C}}
\renewcommand{\d}{\text{\rm d}}
\def\wM {{\widetilde{M}}}
\newcommand{\mc}{\mathcal}
\newtheorem{theorem}{Theorem}
\newtheorem*{definition*}{Definition}
\newtheorem{proposition}[theorem]{Proposition}
\newtheorem{lemma}[theorem]{Lemma}
\DeclareFontFamily{U}{tipa}{}
\DeclareFontShape{U}{tipa}{m}{n}{<->tipa10}{}
\newcommand{\arc@char}{{\usefont{U}{tipa}{m}{n}\symbol{62}}}%
\numberwithin{equation}{section}
\newcommand{\intav}[1]{\mathchoice {\mathop{\vrule width 6pt height 3 pt depth  -2.5pt
\kern -8pt \intop}\nolimits_{\kern -6pt#1}} {\mathop{\vrule width
5pt height 3  pt depth -2.6pt \kern -6pt \intop}\nolimits_{#1}}
{\mathop{\vrule width 5pt height 3 pt depth -2.6pt \kern -6pt
\intop}\nolimits_{#1}} {\mathop{\vrule width 5pt height 3 pt depth
-2.6pt \kern -6pt \intop}\nolimits_{#1}}}
\newcommand{\intavl}[1]{\mathchoice {\mathop{\vrule width 6pt height 3 pt depth  -2.5pt
\kern -8pt \intop}\limits_{\kern -6pt#1}} {\mathop{\vrule width 5pt
height 3  pt depth -2.6pt \kern -6pt \intop}\nolimits_{#1}}
{\mathop{\vrule width 5pt height 3 pt depth -2.6pt \kern -6pt
\intop}\nolimits_{#1}} {\mathop{\vrule width 5pt height 3 pt depth
-2.6pt \kern -6pt \intop}\nolimits_{#1}}}
\title[Sunrise strategy]{Sunrise strategy for 
the \\ continuity of maximal operators}
\author[Carneiro]{Emanuel Carneiro}
\author[Gonz\'{a}lez-Riquelme]{Cristian Gonz\'{a}lez-Riquelme}
\author[Madrid]{Jos\'{e} Madrid}
\address{
ICTP - The Abdus Salam International Centre for Theoretical Physics, 
Strada Costiera, 11, I - 34151, Trieste, Italy.}
\address{IMPA - Instituto de Matem\'{a}tica Pura e Aplicada, 
Estrada Dona Castorina, 110, Jardim Bot\^{a}nico, Rio de Janeiro - RJ, 22460-320, Brazil.}
\email{carneiro@ictp.it}
\email{carneiro@impa.br}
\address{IMPA - Instituto de Matem\'{a}tica Pura e Aplicada, Estrada Dona Castorina, 110, Jardim Bot\^{a}nico, Rio de Janeiro - RJ, 22460-320, Brazil.}
\email{cristian@impa.br}
\address{Department of  Mathematics,  University  of  California,  Los  Angeles (UCLA),  Portola Plaza 520, Los  Angeles,
California, 90095, USA.}
\email{jmadrid@math.ucla.edu}      
\date{\today}                                           
\begin{document}

\subjclass[2010]{42B25, 46E35, 35K08, 26A45}
\keywords{Sunrise lemma, maximal functions, Sobolev spaces, continuity, heat flow.}
\begin{abstract} In this paper we address the $W^{1,1}$--\,continuity of several maximal operators at the gradient level. A key idea in our global strategy is the decomposition of a maximal operator, with the absence of strict local maxima in the disconnecting set, into ``lateral" maximal operators with good monotonicity and convergence properties. This construction is inspired in the classical sunrise lemma in harmonic analysis. A model case for our sunrise strategy considers the uncentered Hardy-Littlewood maximal operator $\wM$ acting on $W^{1,1}_{\rm rad}(\R^d)$, the subspace of $W^{1,1}(\R^d)$ consisting of radial functions. In dimension $d\geq 2$ it was recently established by H. Luiro that the map $f \mapsto \nabla \wM f$ is bounded from $W^{1,1}_{\rm rad}(\R^d)$ to $L^1(\R^d)$, and we show that such map is also continuous. Further applications of the sunrise strategy in connection with the $W^{1,1}$--\,continuity problem include non-tangential maximal operators on $\R^d$ acting on radial functions when $d\geq 2$ and general functions when $d=1$, and the uncentered Hardy-Littlewood maximal operator on the sphere $\mathbb{S}^d$ acting on polar functions when $d\geq 2$ and general functions when $d=1$.
\end{abstract}

\maketitle 

\section{Introduction} 
\subsection{Background} For $f \in L^1_{\rm loc}(\R^d)$ we define its centered Hardy-Littlewood maximal function 
\begin{equation}\label{20200616:17:38}
Mf(x) := \sup_{r >0} \frac{1}{|B_r(x)|}\int_{B_r(x)} |f(y)|\,\d y = \sup_{r >0}\  \intav{B_r(x)} |f(y)|\,\d y\,,
\end{equation}
where $B_r(x) \subset \R^d$ is the open ball centered at $x$ with radius $r$, and $|B_r(x)|$ denotes its $d$-dimensional Lebesgue measure. The crossed integral symbol, as it appears on the right-hand side of \eqref{20200616:17:38}, will always mean an average over the domain of integration in this paper. The uncentered Hardy-Littlewood maximal function $\wM f$ is defined analogously to \eqref{20200616:17:38}, now taking the supremum over open balls that simply contain the point $x$ but that are not necessarily centered at $x$. Maximal operators like \eqref{20200616:17:38} are fundamental objects in harmonic analysis and partial differential equations, being useful tools in establishing a variety of pointwise convergence results.

\smallskip

The classical theorem of Hardy, Littlewood and Wiener states that $M: L^1(\R^d) \to L^{1,\infty}(\R^d)$ and $M: L^p(\R^d) \to L^p(\R^d)$, for $1 < p \leq \infty$, are bounded operators. Being a sublinear operator, the boundedness in $L^p(\R^d)$ ($1 < p \leq \infty$) plainly implies that $M: L^p(\R^d) \to L^p(\R^d)$ is also a continuous operator. The beautiful work of J. Kinnunen \cite{Ki} in 1997, a landmark in the regularity theory of maximal operators, establishes that $M: W^{1,p}(\R^d) \to W^{1,p}(\R^d)$ is bounded for $1 < p \leq \infty$; here $W^{1,p}(\R^d)$ is the first order Sobolev space with exponent $p$. The continuity of the map $M: W^{1,p}(\R^d) \to W^{1,p}(\R^d)$ ($1 < p \leq \infty$) is a non-trivial issue, since sublinearity is not in principle available at the gradient level. This question was settled, in the affirmative, only a decade later, in the elegant work of Luiro \cite{Lu1}. All the statements above hold for the uncentered version $\wM$ as well.

\smallskip

One of the striking features of the regularity theory of maximal operators is the appearance of positive boundedness results at the gradient level despite the absence of corresponding results at the function level. This is the sort of situation that may occur at the endpoint $p=1$. It is believed, for instance, that the total variation of $M f$ should control the total variation of $f$. This was formally posed in the work of Haj\l asz and Onninen \cite{HO} in 2004, in the following form: if $f \in W^{1,1}(\R^d)$, do we have that $Mf$ is weakly differentiable and 
$$\|\nabla M f\|_{L^1(\R^d)} \lesssim_d \, \|\nabla f\|_{L^1(\R^d)} \ ?$$
One can formulate the same question for $\wM$. This question remains unsolved in the general case, but there has been interesting partial progress, all in the affirmative. In dimension $d=1$ the question in the uncentered case was settled by Tanaka \cite{Ta} and Aldaz and P\'{e}rez L\'{a}zaro \cite{AP}, while the very subtle centered case was later settled by Kurka \cite{Ku}. In higher dimensions, Luiro \cite{Lu2} solved the problem in the uncentered case for functions $f \in W^{1,1}_{\rm rad}(\R^d)$, i.e. the subspace of $W^{1,1}(\R^d)$ consisting of radial functions. There are also a couple of promising new results by J. Weigt, solving the total variation version of this question for characteristic functions of sets of finite perimeter \cite{We1}, and its analogue for the dyadic maximal operator \cite{We2}. Related works in this topic include \cite{CFS, CGR, CS, HM, KL, LXK, PPSS, Ra, Saa}.

\smallskip

Once the boundedness is established, a natural question that arises is if the map $f \mapsto \nabla M f$ (or $f \mapsto \nabla \wM f$) from $W^{1,1}(\R^d)$ to $L^1(\R^d)$ is also continuous. Note the additional layer of difficulty coming from the fact that $f \mapsto M f$ (or $f \mapsto \wM f$) is not bounded in $L^1(\R^d)$. This endpoint continuity question was only settled, in the affirmative, in the uncentered case in dimension $d=1$ by Carneiro, Madrid and Pierce \cite[Theorem 1]{CMP}, bringing new oscillation-control mechanisms to overcome the additional obstacles inherent to the problem.

\subsection{Sunrise strategy: a model case} In this paper we aim to provide the next instalment in this theory. Our purpose here is to develop a strategy to approach the $W^{1,1}$--\,continuity problem for a certain class of maximal operators of general interest. Our first result, a model case for our global strategy, complements the recent boundedness result of Luiro \cite{Lu2}.

\begin{theorem}\label{Thm1}
The map $f \mapsto \nabla \wM f$ is continuous from $W^{1,1}_{\rm rad}(\R^d)$ to $L^1(\R^d)$ for $d \geq 2$.
\end{theorem}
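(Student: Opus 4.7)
My plan is to establish the continuity via a Brezis--Lieb dichotomy: if $f_n \to f$ in $W^{1,1}_{\rm rad}(\R^d)$, then pointwise a.e.\ convergence $\nabla \wM f_n \to \nabla \wM f$ combined with norm convergence $\|\nabla \wM f_n\|_{L^1(\R^d)} \to \|\nabla \wM f\|_{L^1(\R^d)}$ yields the required $L^1$-convergence. Since $f$ is radial so is $\wM f$, and writing $\wM f(x) = \gs(|x|)$ with $\gs : [0,\infty) \to \R$, the norm becomes the weighted one-dimensional integral $\omega_{d-1} \int_0^\infty |\gs'(r)|\, r^{d-1}\,\d r$. Thus the problem reduces to a weighted analysis on the half-line.

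The core construction---the sunrise decomposition---is to write $\wM f = \max(\wM^- f, \wM^+ f)$, where the two lateral operators are obtained by restricting the defining supremum to balls whose center $c$ satisfies $|c| \leq |x|$, respectively $|c| \geq |x|$. On the disconnecting set $D := \{r > 0 : \gs(r) > |f(r)|\}$, which is open in $(0,\infty)$, I would first verify the absence of strict local maxima of $\gs$, and then show that on each connected component of $D$ the dominating lateral operator is monotone---the ``sunrise property'', a radial analogue of the Aldaz--P\'{e}rez L\'{a}zaro monotonicity for the one-dimensional uncentered maximal function. This monotonicity allows one to evaluate the contribution $\int_D |\gs'(r)|\,r^{d-1}\,\d r$ by integration by parts, in terms of boundary values of $|f|$ on $\partial D$ plus controlled weighted integrals on each component. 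Outside $D$ one has $\gs = |f|$ a.e., hence $\gs' = |f|'$ a.e.\ on $D^c$, so this piece of the norm depends continuously on $f \in W^{1,1}_{\rm rad}$.

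For the a.e.\ convergence of $\nabla \wM f_n$, I plan to invoke Luiro's pointwise formula expressing the gradient of the maximal function in terms of an optimal ball at $x$, together with a.e.\ convergence of $f_n$ to $f$ and convergence of optimal radii---both of which follow (up to subsequence) from the $W^{1,1}_{\rm rad}$ convergence and standard compactness arguments. The main obstacle I anticipate is the bookkeeping required to pass the sunrise-based formula for $\|\nabla \wM f_n\|_{L^1}$ to the limit, since the component structures of the disconnecting sets $D_n$ may change discontinuously with $n$: components may split, merge, appear, or vanish. My strategy to circumvent this is to re-encode the formula globally through the lateral operators $\wM^\pm f$, which are defined throughout $(0,\infty)$ and inherit the monotonicity guaranteed by the sunrise property; these should behave continuously under convergence of the data and thus provide a stable scaffolding for the norm identity, even when the component structure itself is unstable. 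Additional care will be required to handle the weight $r^{d-1}$---which disrupts the symmetry of the flat one-dimensional argument---and the behavior near $r = 0$, where singular contributions must be ruled out using Luiro's $W^{1,1}$-boundedness for the radial uncentered maximal operator as an a priori bound.
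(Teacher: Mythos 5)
Your overall architecture---reduction to a weighted problem on $(0,\infty)$, decomposition into two lateral operators with sunrise monotonicity on the disconnecting set, Brezis--Lieb, control of singular behavior near the origin, and a bookkeeping scheme robust against instability of the component structure---mirrors the paper's plan closely. The gap is at the key definition. You propose the lateral operators $\wM^{\pm}f$ by restricting the supremum to balls with center $c$ satisfying $|c|\geq|x|$, resp.\ $|c|\leq|x|$. This is exactly the naive decomposition that the paper flags (end of \S1.2) as the major obstacle: there is no obvious reason these operators are monotone on their disconnecting sets. If $\overline{B_s(c)}$ is an optimal ball for $\wM^{+}f$ at $x_1$ with $r_1=|x_1|\leq|c|$, a nearby point at radius $r_2>r_1$ need not satisfy $|c|\geq r_2$, so that ball is not even admissible at $r_2$, and the usual sunrise argument for non-decrease breaks down. (Note also that in $d=1$ your $\wM^{+}$ is strictly larger than the classical $M_R$, which only uses intervals with left endpoint at $x$; the sunrise monotonicity is proved for $M_R$, not for the center-restricted family.) Without monotonicity, the integration-by-parts formula for $\int_D|\gs'(r)|\,r^{d-1}\,\d r$ and the subsequent norm identity do not follow.

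The paper's Section~4 resolves this with a genuinely different construction. In each component $(a_i,b_i)$ of $D(f)\cap(\rho,\infty)$, the absence of strict local maxima (Proposition~\ref{No_local_maxima}) gives an arc $[\tau_i^-,\tau_i^+]$ of minima of $\wM f$, and one \emph{defines} $\wM_R f(r):=\max\{\max_{r\leq t\leq\tau_i^-}f(t),\,\wM f(\tau_i^-)\}$ on the left-descending part $(a_i,\tau_i^-)$ while keeping $\wM_R f=\wM f$ elsewhere. These $\wM_R,\wM_L$ are not maximal operators and not sublinear---they depend on $f$ only through $\wM f$---which makes the pointwise convergence $\wM_R f_j\to\wM_R f$ and the a.e.\ convergence of derivatives (Propositions~\ref{Prop_pointwise_conv_lateral}, \ref{Prop_pointwise_conv_lateral_derivative}) a delicate case analysis rather than a consequence of sublinearity. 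But they do satisfy the monotonicity (Proposition~\ref{Monotonicity_lem}) that your argument needs and that your candidates do not obviously possess. Your proposal correctly anticipates the shape of the proof, but it assumes as available precisely the decomposition whose absence motivates the paper's sunrise construction.
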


Despite the innocence of the statement in Theorem \ref{Thm1}, one should not underestimate the subtlety of the problem, as it will become evident as the proof unfolds and we find ourselves in a beautiful maze of possibilities. It is worth mentioning a few words on the difficulties that one faces when trying to prove this theorem, in direct comparison to the core papers in the literature that deal with similar continuity issues. First, the original proof of Luiro \cite{Lu1} to show the continuity of $M$ (or $\wM$) in $W^{1,p}(\R^d)$ ($1 < p \leq \infty$) relies decisively on the boundedness of $M$ in $L^p(\R^d)$, which is not available in our situation. This was already an issue in the work of Carneiro, Madrid and Pierce \cite[Theorem 1]{CMP} to prove the continuity of $f \mapsto  \big(\wM f\big)'$ from $W^{1,1}(\R)$ to $L^1(\R)$, and a new path was developed. A crucial element in the proof of \cite[Theorem 1]{CMP} was the ability to decompose $\wM$ as a maximum of two operators, namely, 
\begin{equation}\label{20200616_18:28}
\wM f(x) = \max\big\{ M_Rf(x), M_Lf(x)\big\} \ \ {\rm for \ all } \ \ x \in \R,
\end{equation}
where $M_R$ and $M_L$ are the one-sided maximal operators, to the right and left, respectively. The monotonicity properties of these one-sided operators in the connecting and disconnecting sets played a very important role \cite[\S 5.4.1]{CMP}. In our situation of Theorem \ref{Thm1}, when dealing with radial functions on $\R^d$, there is no obvious way to decompose $\wM$ into two ``lateral" operators with similar monotonicity properties, and this is a major obstacle.

\smallskip

There is a parallel wave of very interesting results for the fractional Hardy-Littlewood maximal operator. For $0 < \beta < d$ we define the centered version as
\begin{equation}\label{20200616_12:55}
M_{\beta} f(x) = \sup_{r >0} \frac{1}{|B_r(x)|^{1 - \frac{\beta}{d}}}\int_{B_r(x)} |f(y)|\,\d y,
\end{equation}
whereas the uncentered version $\wM_{\beta}$ is defined analogously, with balls containing $x$ but not necessarily centered at $x$. In sympathy with the classical $L^p(\R^d) \to L^q(\R^d)$ bounds, it was proved by Kinnunen and Saksman \cite{KiSa} that $\wM_{\beta}, M_{\beta}:W^{1,p}(\R^d) \to W^{1,q}(\R^d)$ are bounded if $1 < p < \infty$, $0 < \beta < d/p$ and $q = dp / (d - \beta p)$. The continuity at this level was considered by Luiro in \cite{Lu3}. One has then the corresponding endpoint question (see \cite[Question B]{CMa}): is the operator $f \mapsto \nabla \wM_{\beta} f$ (or $f \mapsto \nabla M_{\beta} f$) bounded from $W^{1,1}(\R^d)$ to $L^{d/(d-\beta)}(\R^d)$? When $1 \leq \beta <d $ this question has a positive answer in general, as remarked in \cite{CMa} and the hard case is when $0 < \beta < 1$. The latter was answered in the affirmative in dimension $d=1$, for $\wM_{\beta}$ in \cite[Theorem 1]{CMa} and $M_{\beta}$ in \cite[Theorem 1.1]{BM1}, and in dimension $d >1$ for $f \in W^{1,1}_{\rm rad}(\R^d)$, also in both cases: for $\wM_{\beta}$ in \cite{LM} and for $M_{\beta}$ in \cite[Theorem 1.2]{BM1}. In this fractional endpoint case, whenever the boundedness holds, the $W^{1,1}$--\,continuity also holds. This was proved by Madrid in \cite{M2} (for $d=1$ and $\wM_{\beta}$) and Beltran and Madrid in \cite{BM, BM1} (in the other cases). Note here the presence of a certain smoothing effect, in the sense that the fractional maximal function \eqref{20200616_12:55} disregards balls of very small radii, and this plays a relevant role in such continuity proofs. The arguments in \cite{BM, BM1, M2} do not fully survive a passage to the limit $\beta \to 0^+$, and hence are also not amenable to treat the case of our Theorem \ref{Thm1}. 

\smallskip 

On the other hand, in \cite[Theorems 3 and 4]{CMP} one has some {\it negative} continuity results for $M_{\beta}$ and $\wM_{\beta}$ in ${\rm BV}(\R)$, despite having the corresponding positive boundedness result for $\wM_{\beta}$ \cite[Theorem 1]{CMa} (whereas the corresponding boundedness from ${\rm BV}(\R)$ to ${\rm BV}_q(\R)$ in the centered case is still an open problem; see \cite{CMa} for the precise formulation). A more classical example of an operator that is bounded in $W^{1,p}(\R^d)$ for $1 \leq p < \infty$, but is not continuous when $d\geq2$, is the symmetric decreasing rearrangement, as observed in the celebrated work of Almgren and Lieb \cite{AL}. This suggests that one should not, in principle, bet all chips on the validity of a continuity statement as in Theorem \ref{Thm1}.

\smallskip

Our approach will naturally draw some inspiration from these core continuity works \cite{BM, BM1, CMP, Lu1, Lu3, M2}, being perhaps a little more in line with the strategy of the first and third authors with L. Pierce in \cite{CMP}. In fact, the method developed here is more general and can be used to give an alternative proof of  \cite[Theorem 1]{CMP}, which is the one-dimensional case. The proof of Theorem \ref{Thm1} is carefully developed in Sections \ref{Sec_Preli} to \ref{Sect_Proof}, where each section addresses an independent aspect of the overall strategy. In Section \ref{Sec_Preli} we provide the preliminaries about maximal operators and radial Sobolev functions, and treat some basic regularity and convergence issues in this setup. In Section \ref{Sec_control} we establish a control of the convergence in a neighborhood of the origin, where potential singularities may appear, thus making it possible to concentrate our efforts in the complement of such neighborhood. Section \ref{Sunrise_sec} develops what is really the main insight of our study: a suitable decomposition in replacement of \eqref{20200616_18:28}, inspired in the classical sunrise lemma in harmonic analysis. Finally, Section \ref{Sect_Proof} brings the proof itself, in which we put together all the pieces in our board, and conclude by carefully analyzing a dichotomy that naturally arises.  

\smallskip

Once the work in Sections \ref{Sec_Preli} to \ref{Sect_Proof} is complete, and we are able to fully see the strategy working in the model case of Theorem \ref{Thm1}, we take a moment in Section \ref{Sec_strategy}
to reflect on what really are the abstract core elements that make the method work. In fact, the reach of our sunrise strategy goes way beyond the situation of Theorem \ref{Thm1}, and these abstract guidelines pave the way for further applications that we now describe.

\subsection{Further applications} \label{Sec_Further_app_Intro}
\subsubsection{Hardy-Littlewood maximal operator on the sphere} \label{Section_polar}Let  $\mathbb{S}^{d} \subset \R^{d+1}$ be the unit sphere and let $d(\zeta,\omega)$ denote the geodesic distance between two points $\zeta,\omega \in \mathbb{S}^{d}$. Let $\mathcal{B}_r(\zeta) \subset \mathbb{S}^{d}$ be the open geodesic ball of center $\zeta \in \mathbb{S}^{d}$ and radius $r >0$, that is $\mathcal{B}_r(\zeta) = \{ \omega \in \mathbb{S}^{d} \ : \ d(\zeta,\omega) < r\}$. For $f \in L^1(\mathbb{S}^{d})$ we define the uncentered Hardy-Littlewood maximal function $\widetilde{\mathcal{M}}f$ by
$$\widetilde{\mathcal{M}}f(\xi) = \sup_{\{\mathcal{B}_r(\zeta) \ : \ \xi \in \mathcal{B}_r(\zeta)\}} \frac{1}{\sigma(\mathcal{B}_r(\zeta))}\int_{\mathcal{B}_r(\zeta)} |f(\omega)|\,\d \sigma(\omega),$$
where $\sigma = \sigma_d$ denotes the usual surface measure on the sphere $\mathbb{S}^{d}$. The centered version ${\mathcal{M}}$ would be defined with centered geodesic balls. Fix ${\bf e} = (1, 0,0,\ldots,0) \in \R^{d+1}$ to be the north pole. We say that a function $f: \mathbb{S}^{d} \to \C$ is {\it polar} if for every $\xi, \zeta \in \mathbb{S}^{d}$ with $ \xi \cdot {\bf e}  =  \zeta \cdot {\bf e} $ we have $f(\xi) = f(\zeta)$. This is the analogue, in the spherical setting, of a radial function in the Euclidean setting. Let $W^{1,1}_{\rm pol}(\mathbb{S}^d)$ be the subspace of  $W^{1,1}(\mathbb{S}^d)$ consisting of polar functions. 

\smallskip

For $d =1$ and $f \in W^{1,1}(\mathbb{S}^1)$ (not necessarily polar), and for $d \geq 2$ and $f \in W^{1,1}_{\rm pol}(\mathbb{S}^d)$, we have that $\widetilde{\mathcal{M}}f$ is weakly differentiable and 
\begin{equation}\label{20200720_12:56}
\|\nabla \widetilde{\mathcal{M}}f\|_{L^1(\mathbb{S}^{d})} \lesssim_d \|\nabla f\|_{L^1(\mathbb{S}^{d})}.
\end{equation}
The case $d=1$ follows by an adaptation of the ideas of Tanaka \cite{Ta} and Aldaz and P\'{e}rez L\'{a}zaro \cite{AP} to the periodic setting (in fact, in dimension $d=1$ the inequality holds with constant $C=1$, i.e. the total variation does not increase). The case $d \geq 2$ is subtler and was established in \cite[Theorem 2]{CGR}.  Complementing \eqref{20200720_12:56} we establish the following.

\begin{theorem}\label{Thm2}
The map $f \mapsto  \nabla \widetilde{\mathcal{M}}f$ is continuous from $W^{1,1}(\mathbb{S}^1)$ to $L^1(\mathbb{S}^1)$ and from $W^{1,1}_{\rm pol}(\mathbb{S}^d)$ to $L^1(\mathbb{S}^d)$ for $d\geq 2$.
\end{theorem}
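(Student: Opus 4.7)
The plan is to apply the abstract sunrise strategy laid out in Section \ref{Sec_strategy} to both regimes appearing in the statement. For $d=1$, the problem on $\mathbb{S}^1$ is the $2\pi$-periodic analogue of \cite[Theorem 1]{CMP}, and one already has the natural decomposition $\widetilde{\mathcal{M}} f = \max\{\mathcal{M}_R f, \mathcal{M}_L f\}$ via one-sided maximal operators taken over arcs with right or left endpoint at the base point. For $d \geq 2$, polar functions on $\mathbb{S}^d$ are determined by a one-dimensional profile $g(\theta)$ on $[0,\pi]$ with spherical weight $\sin^{d-1}\theta$, and the poles $\pm {\bf e}$ will play the role that the origin plays in the radial Euclidean setting of Theorem \ref{Thm1}.

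I would begin by establishing the spherical analogues of the preliminaries in Section \ref{Sec_Preli}: a Luiro-type formula for $\nabla \widetilde{\mathcal{M}} f$ in terms of (nearly) extremal geodesic balls, pointwise-a.e.\ convergence, along subsequences, of such extremal radii $r_n(\xi) \to r(\xi)$ for any $f_n \to f$ in $W^{1,1}$, and the ensuing reduction of continuity to an upper bound of the form $\limsup_n \|\nabla \widetilde{\mathcal{M}} f_n\|_{L^1(\mathbb{S}^d)} \leq \|\nabla \widetilde{\mathcal{M}} f\|_{L^1(\mathbb{S}^d)}$, which when combined with \eqref{20200720_12:56} and the Brezis--Lieb type lemma yields the conclusion. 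For $d \geq 2$, I would then implement the analogue of Section \ref{Sec_control} to absorb the contribution in small geodesic caps around $\pm {\bf e}$: the $\sin^{d-1}\theta$ weight is mildly favorable here, and local control by $\|\nabla f\|_{L^1}$ on thin polar bands should let us restrict attention to the complement of a small neighborhood of the poles. This step is unnecessary for $d=1$ on $\mathbb{S}^1$, since there are no singular points.

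The decisive step is the spherical version of the sunrise decomposition of Section \ref{Sunrise_sec}. On the polar-angle interval $(0,\pi)$, and on $\mathbb{S}^1 \cong \R/2\pi\Z$ after cutting at a reference point, there is a natural linear order, and one defines lateral operators $\mathcal{M}_R, \mathcal{M}_L$ by restricting the sup in $\widetilde{\mathcal{M}}$ to geodesic balls whose extremal endpoint lies to the right, respectively to the left, of the base point. In the absence of strict local maxima of $\widetilde{\mathcal{M}} f$ in the disconnecting set, these lateral operators should inherit the good monotonicity and convergence properties of Section \ref{Sunrise_sec}, so that the dichotomy argument of Section \ref{Sect_Proof} can be transported essentially verbatim to conclude.

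The main obstacle, as I see it, is not the algebraic scaffolding but rather the geometric subtleties proper to $\mathbb{S}^d$: the bounded diameter $\pi$ allows extremal geodesic balls to reach the antipode (and in the polar case to become the whole sphere), the cyclic identification of $\mathbb{S}^1$ requires care in defining ``right'' and ``left'' globally on arcs that may wrap around, and the non-constant spherical Jacobian forces a re-derivation of the pointwise monotonicity and convergence-of-radii lemmas that feed the sunrise decomposition. Once these geometric items are established in direct analogy with Sections \ref{Sec_Preli}--\ref{Sunrise_sec}, the proof of Theorem \ref{Thm2} should follow the same overall architecture as the proof of Theorem \ref{Thm1}, uniformly across the two regimes $d=1$ and $d \geq 2$.
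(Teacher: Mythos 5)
Your overall plan is correct in spirit: reduce to a one-dimensional weighted Sobolev space, control the contribution near the poles, and run the abstract sunrise strategy (P1)--(P5). But your description of the key step is not what the paper does, and the discrepancy is exactly where the paper's new idea lies. You propose to define lateral operators ``by restricting the sup in $\widetilde{\mathcal{M}}$ to geodesic balls whose extremal endpoint lies to the right, respectively to the left, of the base point.'' This is the classical one-sided decomposition of \eqref{20200616_18:28}, which does give $\wM f = \max\{M_R f, M_L f\}$ in dimension one on $\R$ (and, after cutting, on $\mathbb{S}^1$), but it does \emph{not} generalize to the polar case $d\geq 2$. On $\mathbb{S}^d$ the extremizing objects are genuine $d$-dimensional geodesic balls; a ball whose polar-angle shadow straddles the base angle $\theta$ cannot be assigned cleanly to a ``right'' or ``left'' class, and no identity of the form $\widetilde{\mathcal{M}}f = \max\{\mathcal{M}_R f,\mathcal{M}_L f\}$ for such one-sided restrictions is available. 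This is precisely the obstruction flagged in the introduction for radial functions in $\R^d$, and it is the reason Section \ref{Sunrise_sec} introduces a different construction.

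The paper's sunrise operators $\wM_R f,\wM_L f$ are \emph{not} restricted maximal operators at all: they are built \emph{indirectly} from $\wM f$ using the structure of the disconnecting intervals $(a_i,b_i)$, the interval $[\tau_i^-,\tau_i^+]$ of minima of $\wM f$ therein (which exists by the no-local-maxima property (P1)), and the auxiliary functions $W_R^i f,W_L^i f$ in \eqref{20200711_14:12}. The paper explicitly warns, immediately after the definition, that even in dimension one $\wM_R f\neq M_R f$ in general. Consequently, the monotonicity properties of Proposition \ref{Monotonicity_lem} and the delicate pointwise convergence in Propositions \ref{Prop_pointwise_conv_lateral}--\ref{Prop_pointwise_conv_lateral_derivative} are proved case-by-case from that sunrise definition, not inherited from one-sidedness. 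Without the sunrise definition, the part of your argument that asserts these operators ``should inherit the good monotonicity and convergence properties of Section \ref{Sunrise_sec}'' has no foundation for $d\geq 2$, so the crucial Lemma \ref{reduction to lateral} reduction and the dichotomy of Section \ref{Sect_Proof} would have nothing to apply to. (For $d=1$ on $\mathbb{S}^1$, your one-sided decomposition after cutting could plausibly be made to work as a periodic adaptation of \cite[Theorem 1]{CMP}; the paper instead uses the same sunrise construction over the whole circle, choosing an orientation and using that the connecting set is non-empty, so that the two regimes are handled uniformly.) One more small omission: the paper's spherical analogue of pointwise and uniform convergence does not come from a ``vanishing at infinity'' argument but from the weak-type bound for $\widetilde{\mathcal{M}}$ on $\mathbb{S}^d$ to control $\widetilde{\mathcal{M}}g$ at some interior angle $\theta_\eta$, as in \eqref{20200720_13:26}--\eqref{20200720_14:05}; you should make this explicit rather than lean on the Euclidean decay heuristic.
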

The proof of this result is given in \S\ref{Section_polar}.

\subsubsection{Non-tangential Hardy-Littlewood maximal operator} For $\alpha \geq 0$ and $f \in L^1_{\rm loc}(\R)$ we define the non-tangential Hardy-Littlewood maximal operator $M^{\alpha}$ by 
\begin{equation}\label{20200721_13:39}
M^{\alpha}f(x) = \sup_{|x-y| \leq \alpha r} \frac{1}{2r} \int_{y-r}^{y+r} |f(t)|\, \d t.
\end{equation}
With our previous notation, note that when $\alpha = 0$ we have $M^0 = M$ (the centered Hardy-Littlewood maximal operator) and when $\alpha =1$ we have $M^1 = \widetilde{M}$ (the uncentered one). In \cite{Ra}, J. P. Ramos established a beautiful regularity result for such operators: for $\alpha \geq \frac{1}{3}$ and $f: \R \to \R$ of bounded variation, one has 
\begin{equation}\label{20200720_13:11}
{\rm Var}(M^{\alpha}f) \leq {\rm Var}(f),
\end{equation}
where ${\rm Var}(f)$ denotes the total variation of the function $f$. The interesting feature of \eqref{20200720_13:11} is the variation contractivity property (i.e. the constant $C=1$ on the right-hand side of the inequality). The case $\alpha = 1$ had been previously established by Aldaz and J. P\'{e}rez L\'{a}zaro in \cite{AP}. For $0 \leq \alpha < \frac{1}{3}$ inequality \eqref{20200720_13:11} holds with a constant $C$ that is no larger that $240004$ due to the work of Kurka \cite{Ku} (and it is currently unknown if one can bring down this constant $C$ to $1$). The mechanism that implies the contractivity in \eqref{20200720_13:11} is the fact that $M^{\alpha}f$ has no local maxima in the disconnecting set (say, with $f$ slightly smoother, and then one approximates). The threshold $\alpha =\frac{1}{3}$ is geometrically relevant for this absence of local maxima, and we will review later how it comes into play. From \eqref{20200720_13:11} one can show that when $\alpha \geq \frac{1}{3}$ and $f \in W^{1,1}(\R)$ then $M^{\alpha}f$ is weakly differentiable and 
\begin{equation*}
\|(M^{\alpha}f)'\|_{L^1(\R)} \leq \|f'\|_{L^1(\R)}.
\end{equation*}

\smallskip

We now consider an extension of this operator to several variables. Let $\mathcal{Q}$ be the family of all closed cubes in $\R^d$ (with any possible center and any possible orientation, not necessarily with sides parallel to the original axes). If $Q \in \mathcal{Q}$ we let $\alpha Q$ be the cube that is the dilation of $Q$ by a factor $\alpha$ with the same center. For $f \in L^1_{\rm loc}(\R^d)$ we now define
\begin{equation}\label{20200721_13:40}
M^{\alpha}f(x) = \sup_{x \in \alpha Q} \ \intav{Q} |f(y)|\,\d y.
\end{equation}
Note that in dimension $d =1$ definitions \eqref{20200721_13:39} and \eqref{20200721_13:40} agree. We establish here the following result.

\begin{theorem}\label{Thm_non_tang_HL}
Let $\alpha \geq \frac{1}{3}$ and $M^{\alpha}$ be defined by \eqref{20200721_13:40}.
\begin{enumerate}
\item[(i)] If $d =1$ the map $f \mapsto (M^{\alpha}f)'$ is continuous from $W^{1,1}(\R)$ to $L^1(\R)$.
\smallskip
\item[(ii)] If $d\geq 2$ and $f \in W^{1,1}_{\rm rad}(\R^d)$ then $M^{\alpha}f$ is weakly differentiable. Moreover, the map $f \mapsto \nabla M^{\alpha}f$ is bounded and continuous from $W^{1,1}_{\rm rad}(\R^d)$ to $L^1(\R^d)$.
\end{enumerate}
\end{theorem}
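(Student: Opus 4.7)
The plan is to apply the abstract sunrise framework extracted in Section \ref{Sec_strategy}, since the key structural property that enables the method --- absence of strict local maxima of $M^{\alpha}f$ in its disconnecting set --- is precisely what the threshold $\alpha \geq \frac{1}{3}$ is designed to guarantee, as observed by Ramos \cite{Ra}. Once this property is secured, both the boundedness and the continuity assertions should reduce to the template developed for Theorem \ref{Thm1}, modulo some nontrivial geometric work in the cube setting.

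For part (i), the natural lateral decomposition in dimension one is $M^{\alpha}f = \max\{M^{\alpha}_R f, M^{\alpha}_L f\}$, where $M^{\alpha}_R f(x)$ is obtained by restricting the supremum in \eqref{20200721_13:39} to intervals $(y-r,y+r)$ with $y \geq x$ (and $M^{\alpha}_L f$ analogously). These one-sided operators should satisfy the monotonicity and convergence hypotheses demanded by the abstract scheme: $M^{\alpha}_R f$ is nondecreasing on each component of the open set $\{M^{\alpha}f = M^{\alpha}_R f > M^{\alpha}_L f\}$, and symmetrically for $M^{\alpha}_L$. With this in hand, part (i) should run parallel to the proof of Theorem \ref{Thm1}, and in fact a good deal more easily, since there is no radial singularity at the origin to tame; this also recovers the alternative proof of \cite[Theorem 1]{CMP} mentioned in the introduction.

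For part (ii), one must first establish boundedness of $f \mapsto \nabla M^{\alpha}f$, which has not been written down at this generality. The plan is to follow Luiro's radial strategy from \cite{Lu2}: since $M^{\alpha}f$ is itself radial (cubes in \eqref{20200721_13:40} come with arbitrary orientation, so rotations of extremizers are still extremizers), it suffices to estimate the radial derivative, and the no-local-maximum property controls this derivative on each component of the disconnecting set by the boundary values of cube averages, which in turn are controlled by $\|\nabla f\|_{L^1(\R^d)}$ after a suitable change of variables. For continuity, one introduces inward and outward lateral operators $M^{\alpha}_{\rm in}, M^{\alpha}_{\rm out}$, depending on whether the center of the extremizing cube lies radially inside or outside the sphere of radius $|x|$, mimicking the one-sided operators of part (i). One then imports the control near the origin developed in Section \ref{Sec_control} and invokes the sunrise mechanism of Sections \ref{Sunrise_sec}--\ref{Sec_strategy}.

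The main obstacle, as I see it, is the geometric bookkeeping for cubes in $\R^d$ with $d\geq 2$. Because the family \eqref{20200721_13:40} includes cubes of arbitrary orientation, the set of extremizers is considerably richer than for balls, and one must argue that, up to negligible contributions, the extremizing cubes for a radial $f$ at the radial level $r=|x|$ can be selected with orientation compatible with the radial direction, so that the labels ``inward'' and ``outward'' in the lateral decomposition are well defined. This reduction is also what makes the no-local-maximum property of Ramos genuinely usable in the radial multivariable setting; once it is in place, the abstract sunrise machinery should apply essentially as a black box.
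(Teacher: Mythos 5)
Your high-level recognition of what drives the proof --- the $\alpha\ge\frac13$ threshold secures absence of local maxima, (P1) through (P5) must be verified, and then the machinery of Theorem~\ref{Thm1} applies --- is correct. But several of the concrete steps you outline deviate from what the paper actually does in ways that would not work, or would require substantial new arguments that you have not supplied.

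\textbf{The lateral decomposition.} You propose to define lateral operators by restricting the supremum (in dimension one, by center $y\ge x$ versus $y\le x$; in higher dimensions, via ``inward'' and ``outward'' extremizing cubes). This is precisely the kind of natural decomposition that the introduction warns \emph{does not} have the needed monotonicity properties in the multidimensional radial setting (``there is no obvious way to decompose $\wM$ into two `lateral' operators with similar monotonicity properties, and this is a major obstacle''). The whole point of the sunrise construction in Section~\ref{Sunrise_sec} is to sidestep this: the operators $\wM_R,\wM_L$ are manufactured from $\wM f$ itself, by locating the intervals $[\tau_i^-,\tau_i^+]$ of minima of $\wM f$ inside each disconnecting component and running the sunrise lemma on those sides. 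The monotonicity in Proposition~\ref{Monotonicity_lem} comes from this design and from (P1); it is not a property of a supremum over a restricted family of cubes. For $M^\alpha$ the paper simply runs the same sunrise construction of Section~\ref{Sunrise_sec} verbatim (over $\R$ for $d=1$, over $(\rho,\infty)$ for $d\ge2$), with no ``inward/outward'' classification of cubes. Your worry about selecting extremizing cubes with orientation compatible with the radial direction is therefore a red herring; the sunrise scheme never needs to look inside the extremizers in that way.

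\textbf{(P1) for cubes in $\R^d$.} You treat the no-local-maximum property as something imported from Ramos. Ramos's result is one-dimensional (intervals in $\R$). In the radial multidimensional case with rotated cubes you need a new argument, and the paper provides one: Proposition~\ref{Prop_new_arg_cubes} uses a dyadic genealogy $Q_0\supset Q_1\supset\cdots\supset Q_k$, together with the geometric fact that for $\alpha\ge\frac13$ one has $\alpha Q_i\cap\alpha Q_{i+1}\neq\emptyset$, to contradict the assumed strict local maximum. This is genuinely new geometric work (it even yields a self-contained alternative proof of Ramos's inequality in dimension one, as the paper notes).

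\textbf{(P3) is missing.} Flatness in the connecting set --- that $f'=0$ a.e.\ on $\{M^\alpha f = f\}$ --- is not automatic for $M^\alpha$ the way it was for $\wM$. The paper proves it as Proposition~\ref{Prop_discon_}: given $f'(r_0)=c\ne 0$, one constructs an explicit cube $Q_h$ centered at $(r_0\pm\alpha h,0,\dots,0)$ and checks via a computation (the inequality $\eqref{20200724_11:16}$) that its average beats $f(r_0)$ once $\varepsilon<2\alpha c/(\alpha^2+1)$. Without this verification the sunrise machinery does not close, since the integration-by-parts identities in Section~\ref{Sect_Proof} rely on the connecting-set contributions being controlled by $f'$.

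\textbf{Boundedness.} Your sketch (``follow Luiro's radial strategy'') is in the right spirit, but the paper is more precise: the three-step argument in \S\ref{Bound_NTHL} first establishes local Lipschitzness of $M^\alpha f$ in $\mathcal{D}^\alpha(f)$, then follows the mechanism of \cite[Theorem~1, \S2.1]{CGR} to obtain the radial derivative bound, then transfers weak differentiability via \cite[\S5.4]{CS}. The essential inputs are the pointwise domination $M^\alpha f\lesssim_{d,\alpha}\wM f$, the bound \eqref{20200702_12:24}, and Proposition~\ref{Prop_new_arg_cubes}. Your proposal does not identify the domination by $\wM f$, which is also the key to (P2), (P4), and (P5) in the paper's continuity proof.
\end{document}
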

The proof of this result is given in \S\ref{Non_tang_HL}. The boundedness in Theorem \ref{Thm_non_tang_HL} (ii) is also a novelty in the theory. We give a self-contained argument that, {\it en passant}, provides an alternative approach  to \cite{Ra} in order to prove \eqref{20200720_13:11}; see Proposition \ref{Prop_new_arg_cubes} for details.

\subsubsection{Non-tangential heat flow maximal operator} For $t>0$ and $x \in \R^d$ let
$$\varphi_t(x) = \frac{1}{(4\pi t)^{d/2}} \, e^{-|x|^2/4t}$$
be the heat kernel. For $\alpha \geq 0$, consider the following maximal operator
\begin{equation}\label{20200727_11:41}
M_{\varphi}^{\alpha}f(x) = \sup_{t >0 \, ; \, |y-x|\leq \alpha \sqrt{t}} \,(|f| * \varphi_t)(y).
\end{equation}
If we write
$$u(x,t) := (|f| * \varphi_t)(x)$$
then we know that $u$ verifies the heat equation $u_t - \Delta u = 0$ in $\R^d \times (0,\infty)$ with $\lim_{t\to 0^+} u(x,t) = |f(x)|$ for a.e. $x \in \R^d$ (provided $f$ has some minimal regularity, say $f \in L^p(\R^d)$ for any $1 \leq p \leq \infty$). In this sense, when $\alpha =0$, $M_{\varphi}^{0}f(x)$ is just the $\sup$ of $u(x,t)$ over the vertical fiber over $x$ (the heat flow maximal operator) and, when $\alpha >0$, $M_{\varphi}^{\alpha}f(x)$ is a sup of $u(y,t)$ within a parabolic region with lower vertex in $x$ (the non-tangential heat flow maximal operator). The regularity of maximal operators associated to partial differential equations was studied in \cite{BOS, CFS, CGR, CS}. In particular, if $f \in W^{1,1}(\R)$ or $f \in W_{\rm rad}^{1,1}(\R^d)$, if $d \geq 2$, then $M_{\varphi}^0f$ is weakly differentiable and 
$$\|\nabla M_{\varphi}^0 f\|_{L^1(\mathbb{R}^{d})} \lesssim_d \|\nabla f\|_{L^1(\mathbb{R}^{d})}.$$
This is proved in dimension $d=1$ in \cite[Theorem 1]{CS} (in fact with constant $C=1$, and we have the variation contractivity property), and in dimension $d\geq 2$ in \cite[Theorem 1]{CGR}. A key idea in the proof of this inequality is the fact that $M_\varphi^0 f$ is a subharmonic function in the disconnecting set (say, if $f$ continuous and lies in some $L^p(\R^d)$ for $1 \leq p < \infty$), and in particular there are no strict local maxima in such set. Here we consider the non-tangential case and prove the following.

\begin{theorem}\label{Thm_non_tang_heat_flow}
Let $\alpha >0 $ and $M^{\alpha}_\varphi$ defined by \eqref{20200727_11:41}. The map $f \mapsto \nabla M^{\alpha}_\varphi f$ is bounded and continuous from $W^{1,1}(\R)$ to $L^1(\R)$ and from $W^{1,1}_{\rm rad}(\R^d)$ to $L^1(\R^d)$ for $d \geq 2$.
\end{theorem}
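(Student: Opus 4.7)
The plan is to deploy the abstract sunrise strategy distilled in Section~\ref{Sec_strategy}, now applied to the heat-flow setting. The three structural ingredients we must verify are: (a) $M^{\alpha}_{\varphi}f$ has no strict local maxima in the disconnecting set $\{x : M^{\alpha}_{\varphi}f(x) > |f(x)|\}$; (b) the operator admits a lateral decomposition with good monotonicity and convergence behaviour; and (c) convergence can be controlled near the origin (only relevant in the radial case $d\geq 2$). Together with the smoothing effect of the heat semigroup at each fixed $t>0$, ingredients (a)--(c) feed into the machinery of Sections~\ref{Sunrise_sec}--\ref{Sect_Proof} to yield both the boundedness of $f\mapsto\nabla M^{\alpha}_{\varphi}f$ (which in the $d\geq 2$ radial case is new) and the continuity conclusion.

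For (a), suppose $x_{0}$ lies in the disconnecting set and let $(y_{0},t_{0})$, with $t_{0}>0$ and $|y_{0}-x_{0}|\leq\alpha\sqrt{t_{0}}$, attain (or nearly attain) the supremum $M^{\alpha}_{\varphi}f(x_{0}) = u(y_{0},t_{0})$, where $u(y,t):=(|f|*\varphi_{t})(y)$. If the constraint is slack, $(y_{0},t_{0})$ remains admissible for all $x$ near $x_{0}$ and $x_{0}$ is not a strict local maximum. In the tight case $|y_{0}-x_{0}|=\alpha\sqrt{t_{0}}$, we replace $t_{0}$ by $t_{0}+\epsi$: since $\alpha>0$, the admissible ball strictly enlarges, permitting small perturbations of $x_{0}$ in the direction of $y_{0}$ while retaining admissibility, and smoothness of $u$ on $\R^{d}\times(0,\infty)$ yields $u(y_{0},t_{0}+\epsi)=u(y_{0},t_{0})+O(\epsi)$. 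A symmetric perturbation handles the opposite side, ruling out a strict maximum. The limiting cases are treated separately: $t_{n}\to 0^{+}$ forces $u(y_{n},t_{n})\to |f(x_{0})|$, contradicting membership in the disconnecting set; and $t_{n}\to\infty$ forces $u(y_{n},t_{n})\to 0$ by $L^{1}$-decay of the heat flow.

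For (b) in dimension $d=1$, we set
\[
M^{\alpha,R}_{\varphi}f(x) = \sup_{\substack{t>0\\ 0\leq y-x\leq \alpha\sqrt{t}}} u(y,t), \qquad M^{\alpha,L}_{\varphi}f(x) = \sup_{\substack{t>0\\ 0\leq x-y\leq \alpha\sqrt{t}}} u(y,t),
\]
so that $M^{\alpha}_{\varphi}f=\max\{M^{\alpha,R}_{\varphi}f, M^{\alpha,L}_{\varphi}f\}$, mirroring the lateral decomposition \eqref{20200616_18:28} of \cite{CMP}. For $d\geq 2$ and $f$ radial, rotational invariance of $\varphi_{t}$ gives that $M^{\alpha}_{\varphi}f$ is radial; writing it as a function of $r=|x|$ reduces the problem to a one-dimensional question on $(0,\infty)$ with weight $r^{d-1}\,\d r$, where lateral operators are defined with respect to the radial variable. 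The required joint continuity of $u$ and its derivatives as a function of $f\in W^{1,1}$ and $(y,t)\in\R^{d}\times(0,\infty)$, uniform on compact sets bounded away from $t=0$, follows from the smoothing properties of the heat semigroup and powers the convergence step in the sunrise machinery.

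Ingredient (c) is an adaptation of Section~\ref{Sec_control}, simplified by the fact that the heat kernel is smooth at every positive time, so the contribution near $r=0$ is controlled via the $L^{1}$-smallness of $f_{n}-f$ together with a uniform bound at small radii. With (a)--(c) in hand, the boundedness follows from the sunrise-based decomposition of $\nabla M^{\alpha}_{\varphi}f$ into boundary-jump contributions of the lateral operators, each pointwise dominated by $|f'|$ (in the $1$D or radial sense) on the disconnecting set, while the continuity follows from the dichotomy analysis of Section~\ref{Sect_Proof} applied to $f_{n}\to f$. The main obstacle I anticipate is the precise verification of the lateral monotonicity in (b): unlike the Hardy--Littlewood balls, the admissible parabolic set $\{|y-x|\leq\alpha\sqrt{t}\}$ grows with $t$, so the one-sided nesting must be understood as an envelope across scales, complicating the direct comparison of $M^{\alpha,R}_{\varphi}f(x)$ at neighboring points $x$.
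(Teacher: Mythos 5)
Your outline correctly identifies several of the structural ingredients, but there are two substantive gaps that would prevent the proof from going through as written.

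The central problem is in your ingredient (b). The lateral operators you propose,
\[
M^{\alpha,R}_{\varphi}f(x) = \sup_{t>0,\ 0\leq y-x\leq \alpha\sqrt{t}} u(y,t),\qquad
M^{\alpha,L}_{\varphi}f(x) = \sup_{t>0,\ 0\leq x-y\leq \alpha\sqrt{t}} u(y,t),
\]
are one-sided maximal operators in the spirit of $M_R$, $M_L$ from \eqref{20200616_18:28}, \emph{not} the sunrise construction of Section~\ref{Sunrise_sec}. The paper explicitly points out that, even in $d=1$, the classical one-sided maximal operators do not coincide with the sunrise lateral operators $\wM_R$, $\wM_L$ and do not possess the required monotonicity on disconnecting and connecting sets (Proposition~\ref{Monotonicity_lem}); that is precisely why Section~\ref{Sunrise_sec} builds the lateral functions from the interior minima $\tau_i^\pm$ of $\frak{M}f$ on each disconnecting interval, rather than from the operator itself. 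Your parenthetical worry at the end (``the one-sided nesting must be understood as an envelope across scales'') is exactly the right instinct: the nesting does fail, and your decomposition does not give you the one-sided monotonicity you need. What is missing is the realization that one should apply the \emph{same} sunrise construction of Section~\ref{Sunrise_sec} (applied to the function $\frak{M}f = M^{\alpha}_{\varphi}f$ on $(\rho,\infty)$ or on $\R$), which is purely one-dimensional and operator-agnostic once (P1)--(P3) are verified, rather than trying to build new operator-level lateral sups.

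The second and equally serious omission is property (P3), flatness in the connecting set: for the sunrise lateral operators to be monotone on the connecting set one must know that $f'(r_0)\neq 0$ forces $r_0\in D^{\alpha}_{\varphi}(f)$. This is the content of Proposition~\ref{Det_Heat_Flat}, and it is far from automatic. The paper's proof is a delicate quantitative argument: one places the heat-kernel center at $z_0 = (r_0+\alpha\sqrt{t},0,\dots,0)$, truncates to a scale $N\sqrt{t}$ with $N=(t\varepsilon)^{-1/8}$, uses $|y|\geq y_1$ (or a reverse estimate) together with Taylor expansion to compare radial values with linear ones, and balances the error terms to conclude that $f*\varphi_t(z_0) > f(x_0)$ for $t$ and $\varepsilon$ small. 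Moreover (P3) is genuinely operator-dependent: the concluding remarks of the paper show that the analogous flatness property \emph{fails} for the Poisson maximal operator for any $\alpha\geq 0$, so it cannot be waved through as a soft consequence of smoothing. Your proposal never touches (P3), and the strategy cannot close without it.

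On (a), your perturbation argument is close in spirit to the paper's but has a gap in the tight case $|y_0-x_0|=\alpha\sqrt{t_0}$. Replacing $t_0$ by $t_0+\epsi$ only yields $M^{\alpha}_{\varphi}f(x)\geq u(y_0,t_0+\epsi)=M^{\alpha}_{\varphi}f(x_0)+O(\epsi)$ at nearby $x$, and the unsigned $O(\epsi)$ does not contradict a strict local maximum. The paper instead observes that every $y\in\overline{B_{\alpha\sqrt{t_0}}(z_0)}$ satisfies $M^{\alpha}_{\varphi}f(y)\geq M^{\alpha}_{\varphi}f(x_0)$, deduces equality throughout the ball (else the boundary of $[s_0,t_0]$ is hit), and reduces to the case $\alpha=0$: $|z_0|$ would then be a strict local max of $M^{0}_{\varphi}f$ in its disconnecting set, contradicting the subharmonicity of $M^{0}_{\varphi}f$ established in \cite[Lemma 8]{CS}. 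That extra step is what handles the flat-top scenario your perturbation argument leaves open. Finally, you attribute the boundedness to ``boundary-jump contributions of the lateral operators, each pointwise dominated by $|f'|$,'' but the boundedness in the paper does not come from the sunrise decomposition at all: it follows the three-step scheme of \cite[Theorem 1]{CGR} (local Lipschitzness in the disconnecting set, integration by parts controlled by the absence of strict local maxima together with the pointwise bound $M^{\alpha}_{\varphi}f\lesssim_{d,\alpha}\wM f$, and a weak-derivative patching argument), and the sunrise construction is reserved for the continuity part.
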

The proof of this result is given in \S\ref{Proof_heat_flow}. The boundedness part will follow from the circle of ideas in \cite{CGR}, and the main novelty here is the continuity part that will follow from our sunrise strategy. The continuity in the centered case $\alpha = 0$ is not exactly currently accessible with our methods, and we comment a bit on the difficulties for this and other operators of convolution type (e.g. with the Poisson kernel) in \S \ref{Conc_rem}.

\enlargethispage{0.5\baselineskip}

\subsection{A word on notation} We write $A \lesssim_d B$ or $A = O_d(B)$ if $A \leq C B$ for a certain constant $C>0 $ that may depend on the dimension $d$. We say that $A  \simeq_d B$ if $A \lesssim_d B$ and $B \lesssim_d A$. If there are other parameters of dependence, they will also be indicated. Variables like $x,y,z$ will generally be reserved for $\R^d$, while variables like $r,s,t,u,a,b,\ell, \tau, \rho,\eta, \theta, \delta, \varepsilon$ will generally be reserved for $\R$. The surface area of the sphere $\mathbb{S}^{d-1} \subset \R^d$ is denoted by $\omega_{d-1}$. The open ball $B_r(0)$ will be simply called $B_r$. We assume that our functions are real-valued (or $\pm \infty)$. The characteristic function of a set $E$ is $\chi_E$.

\section{Preliminaries: regularity and convergence}\label{Sec_Preli}

\subsection{Basic regularity} \label{Sec_basic_reg}

Let us first make some generic considerations about radial functions in $\R^d$ and weak derivatives. Let $f:\R^d \to \R \cup \{\pm \infty\}$ be a radial function. With a (hopefully) harmless abuse of notation, throughout the text we write $f(x)$ when we referring to this function in $\R^d$, and $f(r)$ when referring to its radial restriction in $(0,\infty)$, where $r = |x|$. 
\smallskip

A radial function $f(x)$ is weakly differentiable in $\R^d \setminus \{0\}$ if and only if its radial restriction $f(r)$ is weakly differentiable in $(0,\infty)$. In this case, the weak gradient $\nabla f$ of $f(x)$ and the weak derivative $f'$ of $f(r)$ are related by $\nabla f(x) = f'(|x|)\frac{x}{|x|}$ (see \cite[Lemma 4]{CGR} for details). Hence $f(x) \in W^{1,1}_{\rm rad}(\R^d)$ if and only if $f(r) \in W^{1,1}((0,\infty), r^{d-1}\d r)$, and 
\begin{equation}\label{20200701_11:44}
\int_{\R^d} |\nabla f(x)|\,\d x  = \omega_{d-1} \int_0^{\infty} |f'(r)|\,r^{d-1}\,\d r <\infty.
\end{equation}
In particular, after a possible redefinition on a set of measure zero, one can take $f(r)$ continuous in $(0,\infty)$; in fact, absolutely continuous in each interval $[\delta, \infty) \subset (0,\infty)$, and hence differentiable a.e. in $(0,\infty)$. This is equivalent to saying that $f(x)$ is continuous in $\R^d\setminus\{0\}$ and differentiable a.e. in $\R^d\setminus\{0\}$. {\it It is henceforth agreed that we will always work under such regularity assumptions}. Note that this is essentially the best regularity one can expect, since at the origin our function $f \in W^{1,1}_{\rm rad}(\R^d)$ may have a singularity like $|x|^{\alpha}$ with $-d+1 < \alpha < 0$.

\smallskip

If $f \in W^{1,1}_{\rm rad}(\R^d)$ is continuous in $\R^d\setminus \{0\}$, it is not hard to show that $\wM f$ is also continuous in $\R^d\setminus \{0\}$ (and, of course, radial). From \cite{Lu2} we know that $\wM f$ is weakly differentiable in $\R^d$ and 
\begin{equation}\label{20200702_12:24}
\|\nabla \wM f\|_{L^1(\R^d)} \lesssim_d \, \|\nabla f\|_{L^1(\R^d)}.
\end{equation}
As in \eqref{20200701_11:44}, it follows that $\wM f(r)$ is absolutely continuous in each interval $[\delta, \infty) \subset (0,\infty)$, and hence differentiable a.e. in $(0,\infty)$. Observe that both $f$ and $\wM f$ vanish at infinity (recall that $\wM f \in L^{1,\infty}(\R^d)$). In fact, a bit more can be said. Since
$$ \wM f(r) = - \int_r^{\infty} \big(\wM f\big)'\!(t)\, \d t,$$ 
we have
\begin{align}\label{20200706_10:01}
\begin{split}
(d-1) \int_0^{\infty} \wM f(r)\, r^{d-2}\, \d r & = (d-1) \int_0^{\infty}\left( \int_r^{\infty} - \big(\wM f\big)'\!(t)\, \d t\right) r^{d-2}\, \d r  \\
& \leq (d-1) \int_0^{\infty} \left(\int_r^{\infty}  \big|\big(\wM f\big)'\!(t)\big|\, \d t\right) r^{d-2}\, \d r  \\
& =  (d-1) \int_0^{\infty} \int_0^{t}  r^{d-2} \, \big|\big(\wM f\big)'\!(t)\big|\, \d r \,\d t \\
& = \int_0^{\infty}  \big|\big(\wM f\big)'\!(t)\big|\, t^{d-1} \,\d t < \infty.
\end{split}
\end{align}
The latter is finite from \eqref{20200702_12:24}. An analogous computation holds with $|f(r)|$ replacing $\wM f(r)$. Hence $r \mapsto |f(r)|\,r^{d-1}$ and $r \mapsto \wM f(r)\,r^{d-1}$ have integrable derivatives in $(0,\infty)$, and by the fundamental theorem of calculus the limits $\lim_{r \to \infty} |f(r)|\,r^{d-1}$ and $\lim_{r \to \infty} \wM f(r)\,r^{d-1}$ must exist. If these limits were not zero, one would plainly contradict \eqref{20200706_10:01} and therefore
\begin{equation}\label{20200706_10:24}
\lim_{r \to \infty} |f(r)|\,r^{d-1} = \lim_{r \to \infty} \wM f(r)\,r^{d-1} = 0.
\end{equation}
Another application of the fundamental theorem of calculus shows that the limits $\lim_{r \to 0^+} |f(r)|\,r^{d-1}$ and $ \lim_{r \to 0^+} \wM f(r)\,r^{d-1}$ must also exist. If these were not zero, one would contradict the fact that $f$ and $\wM f$ belong to $L^{d/(d-1)}(\R^d)$ (the former by Sobolev embedding, and the latter by the boundedness of $\wM$ in $L^{d/(d-1)}(\R^d)$). Hence
\begin{equation}\label{20200708_11:57}
\lim_{r \to 0^+} |f(r)|\,r^{d-1} = \lim_{r \to 0^+} \wM f(r)\,r^{d-1} = 0.
\end{equation}

\subsection{Splitting and non-negative functions} \label{Split_Sec}The following result will be very useful in our strategy. We state it here in a more general version, having in mind the additional applications given in the forthcoming Section \ref{Further_app}.

\begin{lemma}[Divide and conquer]\label{reduction to lateral}
Let $I \subset \R$ be an open interval and let $\mu$ be a non-negative measure on $I$ such that $\mu$ and the Lebesgue measure are mutually absolutely continuous. Let $\mc{X}$ be the space of functions $\psi:I \to \R$ satisfying the following conditions: 
\begin{enumerate}
\item[(i)] $\psi$ is absolutely continuous in each compact interval of $I$;
\item[(ii)] $\psi' \in L^1(I, \d\mu)$. 
\end{enumerate}
Let $h$ and $g$ be two functions in $\mc{X}$ and let $\{h_{j}\}_{j\geq 1}$ and $\{g_{j}\}_{j\geq 1}$ be two sequences in $\mc{X}$ such that 
\begin{enumerate}
\item[(a)] $h_j(x) \to h(x)$ and $g_j(x) \to g(x)$ as $j \to \infty$, for all $x \in I$;
\item[(b)] $\|h'_{j}-h'\|_{L^1(I, \d\mu)}\to 0$ and $\|g'_{j}-g'\|_{L^1(I, \d\mu)}\to 0$ as $j\to\infty$.
\end{enumerate}
Define $f_{j}:=\max\{g_{j},h_{j}\}$ for each $j \geq 1$ and $f:=\max\{g,h\}$. Then  $f \in \mc{X}$, $\{f_{j}\}_{j\geq 1} \subset \mc{X}$, and 
$$
\|f'_{j}-f'\|_{L^1(I, \d\mu)}\to 0 \ \ \ {\rm as} \ \ \ j\to\infty.
$$
\end{lemma}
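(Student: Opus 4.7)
The plan is to reduce the lemma to a statement about absolute values via the elementary identity $\max\{a,b\} = \tfrac{1}{2}(a + b + |a - b|)$. First I would check that $f, f_j \in \mc{X}$: since the map $t \mapsto |t|$ is $1$-Lipschitz, $|g - h|$ is absolutely continuous on each compact subinterval of $I$ whenever $g$ and $h$ are, and so is $f = \tfrac{1}{2}(g + h + |g - h|)$; the same holds for each $f_j$. For the derivative, the classical fact that an absolutely continuous function has zero derivative a.e.\ on any of its level sets yields, at a.e.\ point of $I$,
\begin{equation*}
f' \,=\, g'\,\chi_{\{g > h\}} + h'\,\chi_{\{h > g\}} + g'\,\chi_{\{g = h\}},
\end{equation*}
with $g' = h'$ a.e.\ on the coincidence set $\{g = h\}$. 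In particular $|f'| \leq |g'| + |h'|$ a.e., so $f' \in L^1(I, d\mu)$; the mutual absolute continuity of $\mu$ and Lebesgue measure ensures that ``a.e.'' is unambiguous throughout.

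Next, writing $u_j := g_j - h_j$ and $u := g - h$, the same identity gives
\begin{equation*}
f_j' - f' \,=\, \tfrac{1}{2}\bigl((g_j' - g') + (h_j' - h') + (|u_j|' - |u|')\bigr)
\end{equation*}
a.e.\ in $I$. The first two contributions converge to $0$ in $L^1(I, d\mu)$ by hypothesis, so the heart of the lemma is the assertion $\||u_j|' - |u|'\|_{L^1(I, d\mu)} \to 0$, where one knows only that $u_j \to u$ pointwise in $I$ and $u_j' \to u'$ in $L^1(I, d\mu)$.

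For this, I would invoke the chain-rule identity $|\psi|'(x) = \sgn(\psi(x))\,\psi'(x)$, valid for a.e.\ $x \in I$ and any $\psi \in \mc{X}$ (again using that $\psi' = 0$ a.e.\ on $\{\psi = 0\}$, with the convention $\sgn(0) = 0$). The triangle inequality then yields, a.e.,
\begin{equation*}
\bigl||u_j|' - |u|'\bigr| \,\leq\, |u_j' - u'| \,+\, \bigl|\sgn(u_j) - \sgn(u)\bigr|\cdot |u'|.
\end{equation*}
The $L^1(I, d\mu)$-norm of the first term vanishes by hypothesis. For the second, pointwise convergence $u_j \to u$ forces $\sgn(u_j) \to \sgn(u)$ a.e.\ on $\{u \neq 0\}$, while on $\{u = 0\}$ one has $u' = 0$ a.e. Hence the integrand tends to $0$ a.e.\ and is dominated by $2|u'| \in L^1(I, d\mu)$, and dominated convergence closes the argument.

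I expect the only delicate point to be the rigorous use of the a.e.\ identity $\psi' = 0$ on $\{\psi = 0\}$ for $\psi$ absolutely continuous on compact subintervals. This classical fact is what simultaneously justifies the membership $f \in \mc{X}$, the formula $|\psi|' = \sgn(\psi)\,\psi'$, and the handling of the set $\{u = 0\}$ in the final dominated-convergence step; it is precisely what makes differentiating through the non-smooth operation $\max\{\cdot,\cdot\}$ cause no additional loss.
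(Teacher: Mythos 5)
Your proof is correct. The paper itself does not spell out an argument here; it simply cites \cite[Lemma 11]{CMP} and notes that only minor modifications are needed (to accommodate the measure $\d\mu$ in place of Lebesgue measure). Your route---reducing $\max\{g,h\}$ to $\tfrac12(g+h+|g-h|)$, invoking the a.e.\ chain rule $|\psi|' = \sgn(\psi)\,\psi'$, splitting $|u_j|' - |u|' = \sgn(u_j)(u_j'-u') + (\sgn(u_j)-\sgn(u))\,u'$, and closing via dominated convergence with dominating function $2|u'|\in L^1(I,\d\mu)$---is a clean and self-contained way to carry this out. The argument in \cite{CMP} is organized slightly differently (working directly with the decomposition of $I$ into $\{g>h\}$, $\{g<h\}$, $\{g=h\}$ and tracking how the indicator sets $\{g_j \gtrless h_j\}$ converge), but the substance is the same: the only possible obstruction is the coincidence set $\{g=h\}$, and in both arguments it is rendered harmless by the classical fact that a locally absolutely continuous function has vanishing derivative a.e.\ on each of its level sets, so that $g'=h'$ a.e.\ there (equivalently, $u'=0$ a.e.\ on $\{u=0\}$ in your notation). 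Your algebraic reduction via $|\cdot|$ compresses the case analysis into one dominated-convergence step and is arguably the tidier presentation; the direct indicator-set approach makes the role of $\{g=h\}$ more explicitly visible. One small point worth recording explicitly when you write this up: mutual absolute continuity of $\mu$ and Lebesgue measure is what lets you pass freely between ``Lebesgue-a.e.'' (where the chain rule and the level-set fact live) and ``$\mu$-a.e.'' (where the $L^1(I,\d\mu)$ convergence lives), and it is also what guarantees $\psi'\in L^1(I,\d\mu)$ implies $\psi'$ is finite a.e.\ in the Lebesgue sense; you flag this but it deserves a sentence of its own.
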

\begin{proof} This is essentially \cite[Lemma 11]{CMP}, with  minor modifications in the proof.
\end{proof}
\noindent{\sc Remark:} For the proof of Theorem \ref{Thm1} we shall use Lemma \ref{reduction to lateral} with $I = (0,\infty)$ and $\d\mu(r) = r^{d-1}\d r$. A basic modification of Lemma \ref{reduction to lateral} allows us to also consider the situation where $I = \mathbb{S}^1$ and $\mu$ is the arclength measure. This shall be used in Section \ref{Further_app}.

\smallskip

We are then able to perform a basic reduction.

\begin{proposition}[Reduction to non-negative functions]\label{pass_modulus}
Let $f\in W^{1,1}_{\rm rad}(\R^d)$ and $\{f_{j}\}_{j \geq 1}\subset W^{1,1}_{\rm rad}(\R^d)$ be such that $\|f_{j}-f\|_{W^{1,1}(\R^d)}\to0$ as $j\to\infty$. Then $\||f_{j}|-|f|\|_{W^{1,1}(\R^d)}\to0$ as $j\to\infty$.
\end{proposition}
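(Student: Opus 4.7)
The plan is to verify the two pieces of the $W^{1,1}$--\,norm separately, the $L^1$ part being immediate and the gradient part following from the Stampacchia chain rule together with a subsequence argument.

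For the $L^1$ part, the pointwise inequality $\bigl||f_j(x)|-|f(x)|\bigr|\leq |f_j(x)-f(x)|$ gives $\||f_j|-|f|\|_{L^1(\R^d)}\leq \|f_j-f\|_{L^1(\R^d)}\to 0$, so only the convergence of the weak gradients remains.

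For the gradient part, I invoke the standard fact that for any $h\in W^{1,1}(\R^d)$ the function $|h|$ lies in $W^{1,1}(\R^d)$ with $\nabla|h|=\sgn(h)\,\nabla h$ a.e., where $\sgn$ is defined to be $0$ at the origin; equivalently, $\nabla h=0$ a.e.\ on $\{h=0\}$. Applying this to $h=f$ and $h=f_j$ and adding and subtracting $\sgn(f_j)\,\nabla f$, I write
\[
\nabla|f_j|-\nabla|f|=\sgn(f_j)(\nabla f_j-\nabla f)+\bigl(\sgn(f_j)-\sgn(f)\bigr)\nabla f,
\]
so that
\[
\|\nabla|f_j|-\nabla|f|\|_{L^1(\R^d)}\leq \|\nabla f_j-\nabla f\|_{L^1(\R^d)}+\int_{\R^d}\bigl|\sgn(f_j)-\sgn(f)\bigr|\,|\nabla f|\,\d x.
\]
The first term tends to $0$ by hypothesis, and the main task is to handle the second.

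I would deal with the second term by a standard subsequence-of-subsequence argument. Given any subsequence of $\{f_j\}$, since $f_j\to f$ in $L^1(\R^d)$ I can extract a further subsequence $\{f_{j_k}\}$ with $f_{j_k}(x)\to f(x)$ for a.e.\ $x\in\R^d$. Then $\sgn(f_{j_k}(x))\to \sgn(f(x))$ for a.e.\ $x\in\{f\neq 0\}$, while on $\{f=0\}$ the factor $\nabla f$ vanishes a.e.\ by Stampacchia. Hence the integrand $|\sgn(f_{j_k})-\sgn(f)|\,|\nabla f|$ tends to $0$ pointwise a.e.\ and is dominated by the integrable majorant $2|\nabla f|$, so the dominated convergence theorem yields $\int_{\R^d}|\sgn(f_{j_k})-\sgn(f)|\,|\nabla f|\,\d x\to 0$. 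Since every subsequence admits such a further subsequence converging to $0$, the full sequence converges to $0$, which completes the proof. The only conceptual point here (and the closest thing to an obstacle) is the Stampacchia identity $\nabla f=0$ a.e.\ on $\{f=0\}$, but this is a classical result for Sobolev functions and requires nothing special for the radial subclass.
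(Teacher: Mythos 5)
Your proof is correct, and it takes a genuinely different route from the paper. The paper treats the gradient convergence by observing that $|f|=\max\{f,-f\}$, establishing pointwise convergence of $f_j$ to $f$ on $(0,\infty)$ via the fundamental theorem of calculus (using the radial structure), and then invoking the ``divide and conquer'' Lemma~\ref{reduction to lateral} --- the same lemma that is the workhorse behind the sunrise decomposition later on --- to pass the convergence of derivatives through the max. You instead appeal directly to the Stampacchia chain rule $\nabla|h|=\sgn(h)\nabla h$ (with $\nabla h=0$ a.e.\ on $\{h=0\}$), split the difference into a term controlled by $\|\nabla f_j - \nabla f\|_{L^1}$ plus a term $\int|\sgn(f_j)-\sgn(f)|\,|\nabla f|$, and dispose of the latter by extracting an a.e.\ convergent subsequence from $L^1$-convergence and applying dominated convergence with the majorant $2|\nabla f|$. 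Your argument is more elementary and in fact more general --- it makes no use of the radial structure at all, so it applies verbatim to arbitrary $W^{1,1}(\R^d)$ functions --- whereas the paper's version is tailored to reuse machinery (Lemma~\ref{reduction to lateral} and the radial pointwise estimate) that is already needed for the main theorem. Both are valid; the paper's choice is one of economy within its own framework rather than of necessity.
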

\begin{proof}
Since $\big||f_{j}|-|f|\big| \leq |f_j -f|$ pointwise, it follows that $\||f_{j}|-|f|\|_{L^{1}(\R^d)}\to0$ as $j \to \infty$. By the fundamental theorem of calculus, for each $r \geq \delta$, 
\begin{equation}\label{20200721_10:13}
|f(r) - f_j(r)| = \left|\int_r^{\infty} (f' - f_j')(t) \,\d t \right|\lesssim_{\delta} \int_{0}^{\infty} \big|(f' - f_j')(t)\big|\, t^{d-1} \,\d t \ \to \ 0,
\end{equation}
as $j \to \infty$. Noting that $|f| = \max\{f, -f\}$, the fact that $\|\nabla|f_{j}|-\nabla|f|\|_{L^{1}(\R^d)} = \omega_{d-1} \||f_{j}|'-|f|'\|_{L^1((0,\infty), \,r^{d-1}\d r)}\to 0$ follows directly from Lemma \ref{reduction to lateral}.
\end{proof}

Since the maximal operator only sees the absolute value of a function, in light on Proposition \ref{pass_modulus} {\it we can assume for the rest of the proof of Theorem \ref{Thm1} that all the functions considered are non-negative}.

\subsection{Connecting and disconnecting sets, and local maxima} \label{Sec2.3_CD} Let $f \in W^{1,1}_{\rm rad}(\R^d)$ be continuous in $\R^d\setminus \{0\}$ and non-negative. Define the $d$-dimensional disconnecting set by
\begin{equation*}
\mathcal{D}(f) = \big\{x \in \R^d\setminus \{0\}\ : \ \wM f(x) > f(x)\big\},
\end{equation*}
and its corresponding one-dimensional radial version
\begin{equation*}
D(f) = \{ |x|\ : \ x \in \mathcal{D}(f)\}.
\end{equation*}
Analogously, we define the connecting set 
\begin{equation*}
\mathcal{C}(f) = \big\{x \in \R^d\setminus \{0\}\ : \ \wM f(x) = f(x)\big\},
\end{equation*}
and its one-dimensional radial version
\begin{equation*}
C(f) = \{ |x|\ : \ x \in \mathcal{C}(f)\}.
\end{equation*}
Note that the sets $\mathcal{D}(f) \subset \R^d\setminus \{0\}$ and $D(f) \subset (0, \infty)$ are open. Note also that if $r \in C(f)$ is a point of differentiability of $f$, then we must have $f'(r) = 0$; otherwise one could find a small ball over which the average beats $f(r)$, and $r$ would belong to $D(f)$ instead. We now recall a basic result of the theory, that will be crucial for our sunrise construction later in Section \ref{Sunrise_sec}.

\begin{proposition}[Absence of local maxima]\label{No_local_maxima}
Let $f \in W^{1,1}_{\rm rad}(\R^d)$. The function $\wM f(r)$ does not have a strict local maximum in $D(f)$.
\end{proposition}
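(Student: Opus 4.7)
The plan is to argue by contradiction: suppose $r_0 \in D(f)$ is a strict local maximum of $\wM f(r)$, and fix a point $x_0 \in \R^d$ with $|x_0| = r_0$. I aim to produce a point $x_1 \in \R^d$ arbitrarily close to $x_0$ with $|x_1| \neq r_0$ and $\wM f(x_1) \geq \wM f(x_0)$; the radial symmetry of $\wM f$ then gives $\wM f(|x_1|) \geq \wM f(r_0)$ for $|x_1|$ arbitrarily close to but distinct from $r_0$, contradicting the strict local maximum. The obstacle to anticipate is that mere lower semicontinuity of $\wM f$ (which would follow easily from any near-maximizing ball) is not enough to rule out a strict local maximum; the argument therefore hinges on a genuinely optimal ball that attains the supremum.

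The first substantive step is to show that the supremum defining $\wM f(x_0)$ is attained by an open ball. Since $r_0 \in D(f)$ we have $\wM f(x_0) > f(x_0)$; combined with the continuity of $f$ at $x_0$ (valid because $r_0 > 0$) and the hypothesis $f \in L^1(\R^d)$, in any maximizing sequence of open balls $B_n = B_{\rho_n}(y_n) \ni x_0$ the radii $\rho_n$ must stay bounded away from $0$ (otherwise the averages tend to $f(x_0) < \wM f(x_0)$) and bounded above (otherwise the averages tend to $0$); since $|y_n - x_0| < \rho_n$, the centers $y_n$ are bounded as well. Extracting a convergent subsequence and applying dominated convergence, I obtain an open ball $B^* = B_{\rho^*}(y^*)$ with $x_0 \in \overline{B^*}$ and $\intav{B^*} |f(y)|\,\d y = \wM f(x_0) = \wM f(r_0)$.

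The second step is the geometric construction of $x_1$. If $x_0$ lies in the open ball $B^*$, then a small open neighborhood of $x_0$ sits inside $B^*$, and any point $x_1$ in this neighborhood with $|x_1| \neq r_0$ satisfies $\wM f(x_1) \geq \intav{B^*} |f| = \wM f(r_0)$. The delicate case is $x_0 \in \partial B^*$; writing $w := x_0 - y^*$, so that $|w| = \rho^*$, a point $x_1 = x_0 + \varepsilon v$ (with $\varepsilon > 0$ small and $v$ a unit vector) lies in the interior of $B^*$ precisely when $v \cdot w < 0$, and satisfies $|x_1| \neq r_0$ (to leading order in $\varepsilon$) precisely when $v \cdot x_0 \neq 0$. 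A direction $v$ fulfilling both conditions exists in every configuration: when $w$ and $x_0$ are linearly independent, the open half-space $\{v \cdot w < 0\}$ is $d$-dimensional and hence cannot be contained in the hyperplane $\{v \cdot x_0 = 0\}$; when $w \parallel x_0$ (the tangential case), $v \cdot w$ is a nonzero scalar multiple of $v \cdot x_0$, so $v \cdot w < 0$ already forces $v \cdot x_0 \neq 0$. In either sub-case, $x_1 \in B^*$ yields $\wM f(|x_1|) = \wM f(x_1) \geq \wM f(r_0)$, producing the desired contradiction.
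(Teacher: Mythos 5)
Your step 1 (existence of an optimal ball $B^*$ of positive radius attaining the supremum) and the geometry in step 2 (producing $x_1 \in B^*$ with $|x_1| \neq r_0$ arbitrarily close to $x_0$) are both correct, but the concluding inference fails, and the gap is genuine: you are arguing against a weaker notion of ``strict local maximum'' than the one the paper actually needs and uses.

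The paper spells out in the first line of its proof what ``strict local maximum'' means here: a point $r_0 \in D(f)$ for which there exist $s_0 < r_0 < t_0$ with $[s_0,t_0] \subset D(f)$, $\wM f(r) \leq \wM f(r_0)$ for all $r \in [s_0,t_0]$, and $\wM f(s_0), \wM f(t_0) < \wM f(r_0)$. This permits $\wM f$ to equal its maximal value on a whole subinterval around $r_0$ (a plateau); it only insists that $\wM f$ drop strictly below that value before the endpoints. Your construction yields $x_1$ with $\wM f(|x_1|) \geq \wM f(r_0)$ and $|x_1| \neq r_0$ near $r_0$; under the paper's definition this merely forces $\wM f(|x_1|) = \wM f(r_0)$ (since $\wM f \leq \wM f(r_0)$ throughout $[s_0,t_0]$), which is perfectly consistent with a plateau and yields no contradiction. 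You have ruled out only the ordinary pointwise-strict local maximum, which is a strictly weaker statement. The plateau-excluding version is exactly what Section 4 relies on: it is what guarantees that the minimizing set of $\wM f$ on each disconnecting interval $[a_i,b_i]$ is a single interval $[\tau_i^-,\tau_i^+]$ and that $\wM f$ is monotone on either side of it; a plateau-type interior maximum would destroy that structure.

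The paper closes the argument differently, and your optimal ball already contains everything needed. Every $y \in \overline{B^*}$ satisfies $\wM f(y) \geq \intav{\,\overline{B^*}}\, f = \wM f(x_0)$, so no $y \in \overline{B^*}$ can have $|y| = s_0$ or $|y| = t_0$ (where $\wM f$ is strictly smaller); since $\{|y| : y \in \overline{B^*}\}$ is a connected set containing $r_0$, it lies in $(s_0,t_0) \subset D(f)$. Thus $\wM f(y) > f(y)$ and $\wM f(y) \leq \wM f(x_0)$ for every $y \in \overline{B^*}$, and averaging gives
\begin{equation*}
\wM f(x_0) \,=\, \intav{\,\overline{B^*}}\, f \;<\; \intav{\,\overline{B^*}}\, \wM f \;\leq\; \wM f(x_0),
\end{equation*}
a contradiction that survives even when $\wM f$ is flat near $r_0$. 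Compared to your step 2, this route avoids the boundary geometry at $\partial B^*$ entirely and, more importantly, actually excludes the plateau case, which your argument does not.
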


\begin{proof}
By a strict local maximum we mean a point $r_0 \in D(f)$ for which there exist $s_0$ and $t_0$ with $s_0 < r_0 < t_0$, $[s_0,t_0] \subset D(f)$, such that $\wM f(r) \leq \wM f(r_0)$ for all $r \in [s_0,t_0]$ and $\wM f(s_0), \wM f(t_0) < \wM f(r_0)$. Let $x_0 \in \R^d$ be such that $|x_0| = r_0$, and consider a closed ball $\overline{B}$ such that $x_0 \in \overline{B}$ and $\wM f(x_0) = \intav{\,\overline{B}} f$ (observe that such a ball exists and has a strictly positive radius since $x_0 \in \mc{D}(f))$. From the above we see that $\{|y| \ : \ y \in \overline{B}\} \subset (s_0, t_0)$. Since $[s_0, t_0] \subset D(f)$ we obtain
\begin{equation*}
\wM f(x_0) = \intav{\,\overline{B}} f \ < \ \intav{\,\overline{B}} \wM f \leq \wM f(x_0)\,,
\end{equation*}
a contradiction.
\end{proof}

\subsection{Pointwise convergence} \label{Point_conv_sec}For $x \in \R^d \setminus \{0\}$, let us define ${\mc B}(f;x)$ as the set of closed balls $\overline{B}$ that realize the supremum in the definition of the maximal function at the point $x$, that is 
\begin{equation}\label{20200619_10:30}
{\mc B}(f;x) = \left\{\overline{B}\,;   \ x \in \overline{B} \ : \   \wM f(x) = \, \intav{\, \overline{B}} \, f(y)\,\d y \right\}.
\end{equation}
Note that we include possibility that $\overline{B} = \{x\}$ (we may think of radius zero here), with the understanding that $\intav{\{x\}}  f(y)\,\d y := f(x)$. Therefore, we note that ${\mc B}(f;x)$ is always non-empty. The next proposition qualitatively describes the derivative of the maximal function.

\begin{proposition}[The derivative of the maximal function]\label{Prop_derivative_inside}
Let $f \in W^{1,1}_{\rm rad}(\mathbb{R}^{d})$ be a non-negative function and let $x \in \R^d \setminus \{0\}$ be a point of differentiability of $\wM f$. Then, for any ball $\overline{B} \in {\mc B}(f;x)$ of strictly positive radius, we have
$$\nabla \wM f(x) = \, \intav{\,\overline{B}} \nabla f(y)\,\d y.$$
\end{proposition}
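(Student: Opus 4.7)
The plan is to follow the translation-of-optimal-ball argument originally due to Luiro \cite{Lu1}. Fix any $\overline{B} = \overline{B_r(c)} \in \mathcal{B}(f;x)$ with $r > 0$. For $h \in \R^d$ sufficiently small, the translated ball $\overline{B}+h = \overline{B_r(c+h)}$ still contains $x+h$, since $|(x+h)-(c+h)| = |x-c| \leq r$. Hence, by the very definition of $\wM f$,
\[
\wM f(x+h) \ \geq \ \intav{\, \overline{B}+h} f(y)\,\d y \ = \ \intav{\, \overline{B}} f(z+h)\,\d z\,.
\]
Subtracting $\wM f(x) = \intav{\, \overline{B}} f(z)\,\d z$ yields the one-sided control
\[
\wM f(x+h) - \wM f(x) \ \geq \ \intav{\, \overline{B}} \bigl[f(z+h) - f(z)\bigr]\,\d z\,.
\]

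The next step is to compute the directional derivative at $h=0$ of the right-hand side. Since $f \in W^{1,1}(\R^d)$, a standard density argument shows that $h \mapsto \int_{\overline{B}+h} f$ is differentiable at $h=0$ with gradient $\int_{\overline{B}} \nabla f(z)\,\d z$: the claim is immediate for $f \in C_c^\infty$ by differentiation under the integral, and passes to the $W^{1,1}$ limit using that $\overline{B}$ is compact and $\nabla f \in L^1(\R^d)$. Consequently, taking $h = tv$ with $v \in \R^d$ a fixed unit vector and $t \to 0^+$, I would obtain
\[
\lim_{t \to 0^+}\, \frac{1}{t}\intav{\, \overline{B}} \bigl[f(z+tv) - f(z)\bigr]\,\d z \ = \ \left(\intav{\, \overline{B}} \nabla f(y)\,\d y\right)\cdot v\,.
\]

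Combining this with the differentiability of $\wM f$ at $x$, namely $\wM f(x+h) - \wM f(x) = \nabla \wM f(x)\cdot h + o(|h|)$, dividing by $t$ and letting $t \to 0^+$ gives
\[
\nabla \wM f(x)\cdot v \ \geq \ \left(\intav{\, \overline{B}} \nabla f(y)\,\d y\right)\cdot v
\]
for every unit vector $v \in \R^d$. Replacing $v$ by $-v$ produces the reverse inequality, so equality holds in every direction and the proposition follows. The only delicate ingredient is the differentiability of the translated average, but this is routine for $W^{1,1}$ functions; in particular, the possibility that $\overline{B}$ contains the origin (where $f$ may carry a singularity) is harmless, since $\nabla f$ is globally integrable and $\overline{B}$ is a fixed bounded set.
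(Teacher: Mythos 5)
Your proof is correct, and it reproduces the translation-of-optimal-ball argument underlying Luiro's \cite[Lemma 2.2]{Lu2}, which is exactly what the paper invokes (the paper's proof is just a citation to that lemma). The one technical point you flag --- $L^1$-differentiability of $h \mapsto f(\cdot+h)$ at the scale $o(|h|)$ for $f \in W^{1,1}$, obtained by the translation estimate $\|u(\cdot+h)-u\|_{L^1}\leq |h|\,\|\nabla u\|_{L^1}$ and density of $C_c^\infty$ --- is indeed the only delicate step, and you handle it correctly.
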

\begin{proof}
This is contained in \cite[Lemma 2.2]{Lu2}. 
\end{proof}
This leads us to our considerations on pointwise convergence issues.

\begin{proposition}[Pointwise convergence for $\wM$]\label{Prop_pointwise_conv} Let $f\in W^{1,1}_{\rm rad}(\R^d)$ and $\{f_{j}\}_{j \geq 1}\subset W^{1,1}_{\rm rad}(\R^d)$ be such that $\|f_{j}-f\|_{W^{1,1}(\R^d)}\to0$ as $j\to\infty$. The following statements hold.

\begin{enumerate}
\item[(i)] For each $\delta >0$, we have $f_j(r) \to f(r)$ and $\wM f_j(r) \to \wM f(r)$ uniformly in the set $\{r \geq \delta\}$ as $j \to \infty$.

\smallskip

\item[(ii)] If $x \in \R^d \setminus \{0\}$,  $\overline{B_{s_j}(z_j)} \in  {\mc B}(f_j;x)$\footnote{Recall that we allow for the possibility $\overline{B_{0}(x)} = \{x\}.$} and $(s,z) \in [0,\infty) \times \R^d$ is an accumulation point of the sequence $\{(s_j, z_j)\}_{j \geq 1}$, then $\overline{B_{s}(z)} \in  {\mc B}(f;x)$.

\smallskip

\item[(iii)] For almost all $r \in D(f)$ we have $\big(\wM f_j\big)'(r) \to \big(\wM f\big)'(r)$ as $j \to \infty$.
\end{enumerate}
\end{proposition}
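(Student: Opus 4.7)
My plan is to handle the three parts sequentially, each using its predecessors.

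\emph{For part (i)}, the uniform convergence $f_j \to f$ on $\{r \geq \delta\}$ is immediate from \eqref{20200721_10:13}, whose right-hand side is independent of $r$ and tends to $0$. For the maximal functions, the sublinear bound $|\wM f_j - \wM f| \leq \wM(f_j - f)$ reduces matters to estimating $\wM g(x)$ with $g = f_j - f$ for $|x|\geq \delta$. I split the supremum in the definition into balls $B_r(y) \ni x$ with $r \leq \delta/4$ (entirely contained in $\{|z| \geq 3\delta/4\}$, so the average is controlled by the $L^\infty$-bound on $g$ from the first part) and balls with $r > \delta/4$ (of volume $\gtrsim_d \delta^d$, so the average is $O_{d,\delta}(\|g\|_{L^1})$). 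Both contributions vanish uniformly in $x$.

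\emph{For part (ii)}, let $(s_{j_k}, z_{j_k}) \to (s, z)$ along a subsequence. Passing to the limit in $|x - z_{j_k}| \leq s_{j_k}$ gives $x \in \overline{B_s(z)}$. If $s > 0$, then $|B_{s_{j_k}}(z_{j_k})|$ is bounded below, and writing $\chi_{B_{s_{j_k}}(z_{j_k})} f_{j_k} - \chi_{B_s(z)} f = \chi_{B_{s_{j_k}}(z_{j_k})}(f_{j_k} - f) + (\chi_{B_{s_{j_k}}(z_{j_k})} - \chi_{B_s(z)}) f$, the first term is controlled by $\|f_{j_k} - f\|_{L^1} \to 0$ and the second by dominated convergence, yielding $\intav{B_{s_{j_k}}(z_{j_k})} f_{j_k} \to \intav{B_s(z)} f$. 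If $s = 0$, then necessarily $z = x$, and continuity of $f$ on $\R^d \setminus \{0\}$ together with the uniform convergence from (i) on a small neighborhood of $x$ gives $\intav{B_{s_{j_k}}(z_{j_k})} f_{j_k} \to f(x)$. In either case the limit matches $\wM f(x)$ thanks to part (i), so $\overline{B_s(z)} \in {\mc B}(f;x)$.

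\emph{For part (iii)}, I restrict to the full-measure subset of $D(f)$ on which $\wM f$ and every $\wM f_j$ are differentiable. Fix such an $r$, set $|x| = r$, and pick $\overline{B_{s_j}(z_j)} \in {\mc B}(f_j; x)$. The sequence $\{(s_j, z_j)\}$ is bounded: $s_j \to \infty$ would force the averages $\intav{B_{s_j}(z_j)} f_j$ to vanish, contradicting $\wM f_j(x) \to \wM f(x) > 0$, and then $|z_j| \leq |x| + s_j$ is bounded too. For any convergent subsequence $(s_{j_k}, z_{j_k}) \to (s, z)$, part (ii) places $\overline{B_s(z)}$ in ${\mc B}(f; x)$, and crucially $s > 0$ because $r \in D(f)$ forces $\wM f(x) > f(x)$, ruling out the trivial ball. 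Proposition \ref{Prop_derivative_inside} then expresses both radial derivatives as averages of $\nabla f_{j_k}$ and $\nabla f$ over the respective balls; decomposing $\nabla f_{j_k} = \nabla f + (\nabla f_{j_k} - \nabla f)$, the perturbation average is $O(\|\nabla f_{j_k}-\nabla f\|_{L^1}/|B_{s_{j_k}}(z_{j_k})|) \to 0$, while the main term converges to $\intav{B_s(z)} \nabla f$ by dominated convergence. This yields $(\wM f_{j_k})'(r) \to (\wM f)'(r)$ along the subsequence, and since every subsequence admits a further subsequence with this same limit, the full sequence converges. The central delicacy is ensuring $s > 0$ in the limit so that Proposition \ref{Prop_derivative_inside} is applicable, and this is exactly what the hypothesis $r \in D(f)$ secures.
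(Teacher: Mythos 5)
Your proof is correct, and parts (ii) and (iii) essentially follow the paper's outline, with welcome extra detail: the paper dispatches (ii) in one sentence and leaves the last step of (iii) (the decomposition $\nabla f_{j_k} = \nabla f + (\nabla f_{j_k}-\nabla f)$ together with dominated convergence) implicit. The one place you genuinely diverge is the uniform convergence $\wM f_j \to \wM f$ in part (i). The paper writes
\[
\big|\wM f_j(r) - \wM f(r)\big| \leq \wM(f-f_j)(r) = -\int_r^\infty \big(\wM(f-f_j)\big)'(t)\,\d t \lesssim_{\delta,d} \int_0^\infty \big|\big(\wM(f-f_j)\big)'(t)\big|\,t^{d-1}\,\d t
\]
and then invokes Luiro's gradient bound \eqref{20200702_12:24} to control this by $\|\nabla(f-f_j)\|_{L^1(\R^d)}$. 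You instead split the defining supremum of $\wM(f_j-f)(x)$ for $|x|\geq\delta$ according to the radius of the competing ball: balls of radius $\leq \delta/4$ stay uniformly away from the origin, where the $L^\infty$ smallness of $f_j-f$ (already established via \eqref{20200721_10:13}) controls the average; balls of radius $>\delta/4$ have volume $\gtrsim_d \delta^d$, so their average is $\lesssim_{d,\delta}\|f_j-f\|_{L^1}$. This is slightly more elementary, as it avoids \eqref{20200702_12:24} entirely, at the cost of a case split; the paper's route is shorter given that the gradient bound is already on the table. (Minor: your constant $3\delta/4$ should read $\delta/2$, since a ball of radius $r\leq\delta/4$ containing $x$ lies in $B_{2r}(x)\subset\{|z|>\delta/2\}$; this does not affect the argument.) Both approaches are valid.
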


\begin{proof} Part (i). The uniform convergence $f_j(r) \to f(r)$ as $ j \to \infty$ follows from \eqref{20200721_10:13}. Using the sublinearity of $\wM$ we also have
\begin{align}\label{20200724_11:39}
\begin{split}
\big| \wM f(r) -  \wM f_j(r)\big| \leq \wM (f - f_j)(r) &= - \int_r^{\infty} \big(\wM(f - f_j)\big)'(t) \,\d t \lesssim_{\delta} \int_0^{\infty} \big|\big(\wM(f - f_j)\big)'(t)\big|\,t^{d-1} \,\d t\\
& \lesssim_{\delta, d} \int_{0}^{\infty} \big|(f' - f_j')(t)\big|\, t^{d-1} \,\d t \ \to \ 0
\end{split}
\end{align}
as $j \to \infty$. Note the use of \eqref{20200702_12:24} in the last passage above.

\smallskip

\noindent Part (ii). This follows by using part (i). One may divide in the cases $s >0$ and $s=0$.

\smallskip

\noindent Part (iii). Assume that $D(f) \subset (0,\infty)$ has positive measure, otherwise we are done (in particular we may assume that $f \not\equiv 0$). Let $E(f) \subset (0,\infty)$ (resp. $E(f_j) \subset (0,\infty)$) be the set of measure zero where $\wM f(r)$ (resp. $\wM f_j(r)$) is not differentiable. Let us prove the statement for any $r \in D(f) \setminus \big(E(f) \cup \big(\cup_{j=1}^{\infty} E(f_j))\big)$.

\smallskip

Let $x \in \R^d \setminus \{0\}$ be such that $|x| = r$. Then $\wM f$ and all $\big\{\wM f_j\big\}_{j\geq 1}$ are differentiable at $x$. From part (i) we find that $x \in \mathcal{D}(f_j)$ for $j \geq j_0$. Using parts (i) and (ii), and the fact that $\{\|f_j\|_{L^1(\R^d)}\}_{j\geq 1}$ is bounded we find that there exist $\varepsilon >0$ , $N >0$ and $j_1 \geq j_0$ such that if $\overline{B_{s_j}(z_j)} \in  {\mc B}(f_j;x)$ for $j \geq j_1$ then $\varepsilon \leq s_j \leq N$. The result now follows from part (ii) and Proposition \ref{Prop_derivative_inside}.
\end{proof}

\section{Control near the origin}\label{Sec_control}

In this section we develop the first part of our overall strategy of the proof of Theorem \ref{Thm1}, by establishing a control of the convergence near the origin. This is inspired in an argument of \cite{GR}.

\begin{proposition}[Control near the origin] \label{Prop_control_origin}Let $f\in W^{1,1}_{\rm rad}(\R^d)$ and $\{f_{j}\}_{j \geq 1}\subset W^{1,1}_{\rm rad}(\R^d)$ be such that $\|f_{j}-f\|_{W^{1,1}(\R^d)}\to0$ as $j\to\infty$. Then for every $\varepsilon >0$ there exists $\eta = \eta(\varepsilon) > 0$ such that 
\begin{equation*}
\int_{B_{\eta}} \big| \nabla \wM f\big| < \varepsilon \ \ \ {\rm and} \ \ \ \int_{B_{\eta}} \big| \nabla \wM f_j\big| < \varepsilon
\end{equation*}
for all $j \geq j_1(\varepsilon, \eta)$.
\end{proposition}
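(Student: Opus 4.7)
First, the bound $\int_{B_\eta} |\nabla \wM f| < \varepsilon$ for a single $f$ comes for free from Luiro's estimate \eqref{20200702_12:24}, which places $\nabla \wM f \in L^1(\R^d)$, so by absolute continuity of the Lebesgue integral there is an $\eta$ making the integral less than $\varepsilon/2$. The substantive content of the proposition is the uniform bound over the sequence $\{f_j\}$, since the norms $\|\nabla \wM f_j\|_{L^1(\R^d)}$ are merely uniformly bounded and not a priori equi-integrable near the origin.

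My plan rests on two auxiliary uniform decay estimates that I would extract directly from the $W^{1,1}$ convergence via sublinearity. Setting $h_j := |f_j - f|$, so $\|\nabla h_j\|_{L^1(\R^d)} \to 0$, the sublinearity of $\wM$ gives $|\wM f_j - \wM f| \leq \wM h_j$ pointwise. Combining the Sobolev-type control developed in \S\ref{Sec_basic_reg} (which produced \eqref{20200706_10:24}--\eqref{20200708_11:57}) with Luiro's bound \eqref{20200702_12:24} applied to $h_j$ yields
\[
\sup_{r>0} \wM h_j(r)\, r^{d-1} \;\leq\; \int_0^{\infty} |(\wM h_j)'(s)|\,s^{d-1}\,\d s \;\lesssim_d\; \|\nabla h_j\|_{L^1(\R^d)} \;\to\; 0,
\]
which together with \eqref{20200708_11:57} forces $\wM f_j(r)\,r^{d-1}$ to be uniformly small for $r$ near $0$ and $j$ large. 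The same sublinearity together with the moment bound \eqref{20200706_10:01} applied to $h_j$ yields analogously that $\int_0^{\eta} \wM f_j(r)\,r^{d-2}\,\d r$ is uniformly small once $\eta$ is small and $j$ is large (the tail $\int_0^\eta \wM f(r)\, r^{d-2}\,\d r$ is itself small by \eqref{20200706_10:01} applied to $f$).

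With these two primitive estimates in hand, I would convert them into the desired gradient bound using the structure of the disconnecting set. Since $(\wM f_j)'(r) = 0$ for a.e.\ $r \in C(f_j)$ (as observed in \S\ref{Sec2.3_CD}), the integral $\int_0^\eta |(\wM f_j)'(r)|\,r^{d-1}\,\d r$ is supported on the open set $D(f_j) \cap (0, \eta)$. Decomposing this as $\bigsqcup_k (a_k, b_k)$ and invoking the absence of strict local maxima (Proposition \ref{No_local_maxima}) to get a unimodal (V-shaped) profile of $\wM f_j$ on each component, an integration by parts on each monotone piece yields
\[
\int_{a_k}^{b_k} |(\wM f_j)'(r)|\,r^{d-1}\,\d r \;\leq\; \wM f_j(a_k)\,a_k^{d-1} + \wM f_j(b_k)\,b_k^{d-1} + (d-1)\!\int_{a_k}^{b_k} \wM f_j(r)\,r^{d-2}\,\d r.
\]

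The hard part I anticipate is the summation in $k$: each individual boundary term $\wM f_j(a_k)\, a_k^{d-1} = f_j(a_k)\,a_k^{d-1}$ is uniformly small by the first primitive estimate, but there may be infinitely many components. The delicate task is to organize these boundary contributions---telescopically, using that $f_j$ is locally constant on the $C(f_j)$-gaps (since its weak derivative vanishes a.e.\ there, as noted in \S\ref{Sec2.3_CD})---so that $\sum_k [\wM f_j(a_k)\,a_k^{d-1} + \wM f_j(b_k)\,b_k^{d-1}]$ collapses into an expression controlled by the uniform pointwise smallness of $f_j(r)\,r^{d-1}$ together with the already-small quantity $\int_0^\eta \wM f_j(r)\,r^{d-2}\,\d r$. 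Once this combinatorial packing is achieved, summing the component estimates over $k$ and invoking the two primitive bounds delivers $\int_{B_\eta} |\nabla \wM f_j| < \varepsilon$ for $\eta$ small and $j \geq j_1$.
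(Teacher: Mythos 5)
Your two primitive estimates are correct, and the integration-by-parts computation on a single component is valid (it is essentially the same manipulation the paper carries out in a different context in \eqref{20200728_10:53}). But the point you flag as ``the hard part'' is not a technicality to be deferred: it is a genuine gap, and I do not see how the telescoping you anticipate could work. The boundary sum $\sum_k \bigl[f_j(a_k)\,a_k^{d-1} + f_j(b_k)\,b_k^{d-1}\bigr]$ is a sum of non-negative quantities evaluated at countably many points. Your first primitive estimate makes each summand small, but there is no upper bound on the number of components of $D(f_j)\cap(0,\eta)$, and the components of an open set need not alternate with intervals of $C(f_j)$ on which $f_j$ is genuinely constant --- the connecting set can have empty interior (think of a Cantor-type $C(f_j)$). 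So ``$f_j$ locally constant on the $C(f_j)$-gaps'' does not supply a telescoping identity for the boundary terms, which are values of $f_j(r)r^{d-1}$, not differences. What you would actually need to control is a weighted total variation of $\wM f_j$ near the origin, and the two primitive bounds you derive (a pointwise bound on $\wM f_j(r)r^{d-1}$ and an $L^1$ bound on $\wM f_j(r)r^{d-2}$) do not dominate such a variation: a function can be uniformly small and have small mass while still oscillating arbitrarily much. Without an additional idea this route stalls.

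The paper's actual argument sidesteps the connecting/disconnecting decomposition entirely. It proves the clean local inequality \eqref{20200708_12:16},
\[
\int_{B_{\eta}} \big| \nabla \wM g\big| \ \lesssim_d \ \int_{B_{\ell \eta}} \big| \nabla  g\big| + \frac{1}{\ell^d} \int_{\R^d} \big| \nabla  g\big| + g(\ell \eta) ( \ell \eta)^{d-1},
\]
valid for any non-negative $g\in W^{1,1}_{\rm rad}(\R^d)$, by Luiro's dichotomy on the radius of the optimal ball: on the set $\mathcal{A}\subset B_\eta$ where the best ball has radius $\gtrsim\ell\eta$, Proposition~\ref{Prop_derivative_inside} and scaling give $|\nabla\wM g|\lesssim \|\nabla g\|_{L^1}/(\ell\eta)^d$ pointwise, yielding the middle term; on $B_\eta\setminus\mathcal{A}$ every optimal ball lies inside $B_{\ell\eta}$, so $\wM g$ agrees there with $\wM g_\beta$ for a truncation $g_\beta$ of $g$ supported in $\overline{B_{\ell\eta+\beta}}$, and the global bound \eqref{20200702_12:24} applied to $g_\beta$ produces the remaining two terms after $\beta\to0$. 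This localizes the estimate without any summation over components, and the uniformity in $j$ then follows at once from $\|\nabla f_j-\nabla f\|_{L^1}\to0$, \eqref{20200708_11:57}, and pointwise convergence $f_j(\ell\eta)\to f(\ell\eta)$. I would encourage you to study this ball-size dichotomy: it is a more robust mechanism than the one-dimensional sunrise decomposition and is precisely what gives the needed equi-integrability near the origin.
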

\begin{proof} If $f =0$ the result follows directly from \eqref{20200702_12:24}. So let us assume that $f \not\equiv 0$. Recall that we may assume that all our functions are non-negative. For a generic $g \in W^{1,1}_{\rm rad}(\R^d)$ non-negative we claim that for any $\eta>0$ and $\ell >2$ we have
\begin{equation}\label{20200708_12:16}
\int_{B_{\eta}} \big| \nabla \wM g\big| \lesssim_d \int_{B_{\ell \eta}} \big| \nabla  g\big| + \frac{1}{\ell^d} \int_{\R^d} \big| \nabla  g\big| + g(\ell \eta) ( \ell \eta)^{d-1}.
\end{equation} 
The conclusion of Proposition \ref{Prop_control_origin} plainly follows from this claim by taking $\ell$ large, $\eta$ small (with the product $\ell \eta$ still small), and using \eqref{20200708_11:57} and the fact that $f_j(\ell\eta)$ converges pointwise to $f(\ell\eta)$ given by Proposition \ref{Prop_pointwise_conv} (i).

\smallskip

Let us then prove the claim \eqref{20200708_12:16}. For each $x \in \R^d \setminus\{0\}$ let $r_x$ be the maximal radius of a closed ball in ${\mc B}(g;x)$. Define the set 
$$\mathcal{A}:=\Big\{x\in B_{\eta} \setminus \{0\} \ : \ r_x\ge \frac{\ell \eta}{4}\Big\}.$$
Using Proposition \ref{Prop_derivative_inside} we find that 
\begin{equation}\label{20200708_13:50}
\int_{\mathcal{A}} \big| \nabla \wM g\big| \lesssim_d \int_{B_{\eta}} \frac{\big\|\nabla  g \big\|_{L^1(\R^d)}
}{ (\ell \eta)^d}  \lesssim_d \frac{\big\|\nabla g \big\|_{L^1(\R^d)}}{ \ell^d}.
\end{equation}
We now take care of the integral over $B_{\eta} \setminus \mathcal{A}$. For every $\beta >0$ define a function $g_{\beta} \in W^{1,1}_{\rm rad}(\R^d)$ by 
\begin{equation*}
g_{\beta}(r) = \left\{
\begin{array}{lcl}
g(r) & {\rm for} & 0< r < \ell \eta;\\
\frac{-g(\ell \eta)}{\beta}r+\frac{(\ell \eta+\beta)g(\ell\eta)}{\beta}  & {\rm for} & \ell \eta\leq r \leq \ell \eta + \beta;\\
0 & {\rm for} & \ell \eta + \beta< r.
\end{array}
\right.
\end{equation*}
Assume for a moment that $\ell \eta$ is a point of differentiability of $g(r)$. Then, for $\beta$ small enough, we have that $g_{\beta} \leq g$, and hence $\wM g_{\beta} \leq \wM g$. If $x \in B_{\eta} \setminus \mathcal{A}$, then $r_x < \ell \eta/4$ and any ball $\overline{B} \in {\mc B}(g;x)$ will be entirely contained in $\overline{B_{\eta +\frac{\ell \eta}{2}}} \subset \overline{B_{\ell \eta}}$. This implies that $\wM g(x) \leq \wM g_{\beta}(x)$ for such $x$, and hence $\wM g_{\beta} = \wM g$ in the set $B_{\eta} \setminus \mathcal{A}$ (note also that this set is open by Proposition \ref{Prop_pointwise_conv} (ii)). Using \eqref{20200702_12:24} we then find
\begin{align*}
\int_{B_{\eta} \setminus \mathcal{A}} \big| \nabla \wM g\big| &= \int_{B_{\eta} \setminus \mathcal{A}} \big| \nabla \wM g_{\beta}\big| \leq \int_{\R^d} \big| \nabla \wM g_{\beta}\big| \lesssim_d \int_{\R^d} |\nabla g_{\beta}| \\
& = \int_{B_{\ell \eta}} \big| \nabla  g\big| + \omega_{d-1} \frac{g(\ell \eta)}{\beta} \int_{\ell \eta}^{\ell \eta+\beta} t^{d-1}\,\d t.
\end{align*}
Sending $\beta \to 0$ we obtain 
\begin{equation}\label{20200708_13:51}
\int_{B_{\eta} \setminus \mathcal{A}} \big| \nabla \wM g\big| \lesssim_d \int_{B_{\ell \eta}} \big| \nabla  g\big| + \omega_{d-1} \,g(\ell \eta) \,( \ell \eta)^{d-1}.
\end{equation}
By adding \eqref{20200708_13:50} and \eqref{20200708_13:51} we arrive at \eqref{20200708_12:16}. For any fixed $\eta >0$, the right-hand side of \eqref{20200708_12:16} is continuous in $\ell$, and hence the inequality holds also if $\ell \eta$ is not a point of differentiability of $g(r)$.
\end{proof}

\section{The sunrise construction}\label{Sunrise_sec}

The purpose of this section is to present a decomposition that will play the role of \eqref{20200616_18:28} in our multidimensional radial case, and understand its basic properties.

\subsection{Definition} Let $f \in W^{1,1}_{\rm rad}(\R^d)$ be continuous in $\R^d\setminus \{0\}$ and non-negative. From \eqref{20200706_10:24} we henceforth denote $f(+\infty) = \wM f(+\infty) := 0$. For technical reasons that will become clearer later (e.g. see Proposition \ref{Prop_pointwise_conv_lateral} below), it will be convenient to avoid a neighborhood of the origin in our discussion, and we let $\rho >0$ be a fixed parameter throughout this section. {\it It should be clear from the start that all the new constructions in this section depend on such parameter $\rho >0$,} and we shall excuse ourselves from an explicit mention to it in some of the passages and definitions below in order to simplify the notation.

\smallskip

We start by decomposing the open set $D(f) \cap (\rho,\infty)$ into a countable union of open intervals 
\begin{equation}\label{20200720_14:54}
D(f) \cap (\rho, \infty) = \bigcup_{i=1}^{\infty} \big(a_i(f;\rho), b_i(f;\rho)\big).
\end{equation}
When the dependence on $f$ and $\rho$ is clear, we shall simply write $(a_i,b_i)$ instead of $\big(a_i(f;\rho), b_i(f;\rho)\big)$. Let $(a_i,b_i)$ be a generic interval of this decomposition. Proposition \ref{No_local_maxima}  guarantees the existence of $\tau_i^- =  \tau_i^-(f;\rho)$ and $\tau_i^+ =  \tau_i^+(f;\rho)$ such that $a_i \leq  \tau_i^- \leq  \tau_i^+ \leq b_i$ and 
\begin{equation*}
[ \tau_i^-, \tau_i^+ ] = \big\{ r \in [a_i,b_i] \ : \ \wM f(r) = \min\big\{ \wM f(s) \ ; \  s \in [a_i,b_i]\big\}\big\}.
\end{equation*}
That is, $[ \tau_i^-, \tau_i^+ ]$ is the interval of points of minima of $\wM f$ in $[a_i,b_i]$. Note that possibilities like $\tau_i^- = \tau_i^+$, $\tau_i^- =a_i = \rho$ or $\tau_i^+ = b_i =+\infty$ are all duly accounted for. From Proposition \ref{No_local_maxima} we know that $\wM f(r)$ is non-increasing in $[a_i, \tau_i^-]$ and non-decreasing in $[\tau_i^+, b_i]$.

\smallskip

Inspired by the classical construction of the sunrise lemma in harmonic analysis we now consider the following functions. For $r \in (a_i, \tau_i^-)$ (this interval may be empty) define
\begin{equation}\label{20200711_14:12}
W^i_R f(r) = \max\left\{ \max_{r \leq t \leq \tau_i^-}f(t) \ , \ \wM f(\tau_i^-)\right\},
\end{equation}
and for $r \in (\tau_i^+, b_i)$ (this interval may be empty) define
\begin{equation*}
W^i_L f(r) = \max\left\{ \max_{\tau_i^+ \leq t \leq r}f(t) \ , \ \wM f(\tau_i^+)\right\}.
\end{equation*}
We are now in position to define our analogues of the lateral maximal functions in \eqref{20200616_18:28}. For each $r \in (\rho,\infty)$ we define the functions $\wM_R f = \wM_R(f;\rho)$ and $\wM_L f = \wM_L(f;\rho)$ at the point $r$ by
\begin{equation*}
\wM_R f(r)= \left\{
\begin{array}{lcl}
\wM f(r) & {\rm if} & r \in C(f) \ {\rm or} \ r \in [\tau_i^-, b_i) \ {\rm for \ some}\ i \geq 1.\\
W^i_R f(r)& {\rm if} & r \in (a_i, \tau_i^-)\ {\rm for \ some}\ i \geq 1;
\end{array}
\right. 
\end{equation*}
and
\begin{equation*}
\wM_L f (r)= \left\{
\begin{array}{lcl}
\wM f(r) & {\rm if} & r \in C(f) \ {\rm or} \ r \in (a_i, \tau_i^+] \ {\rm for \ some}\ i \geq 1;\\
W^i_L f(r)& {\rm if} & r \in (\tau_i^+, b_i)\ {\rm for \ some}\ i \geq 1.
\end{array}
\right. 
\end{equation*}

\smallskip

\noindent {\sc Remark:} Note that we are not defining these functions in the interval $(0,\rho]$.

\begin{figure}
  \centering
  \includegraphics[height=7.5cm]{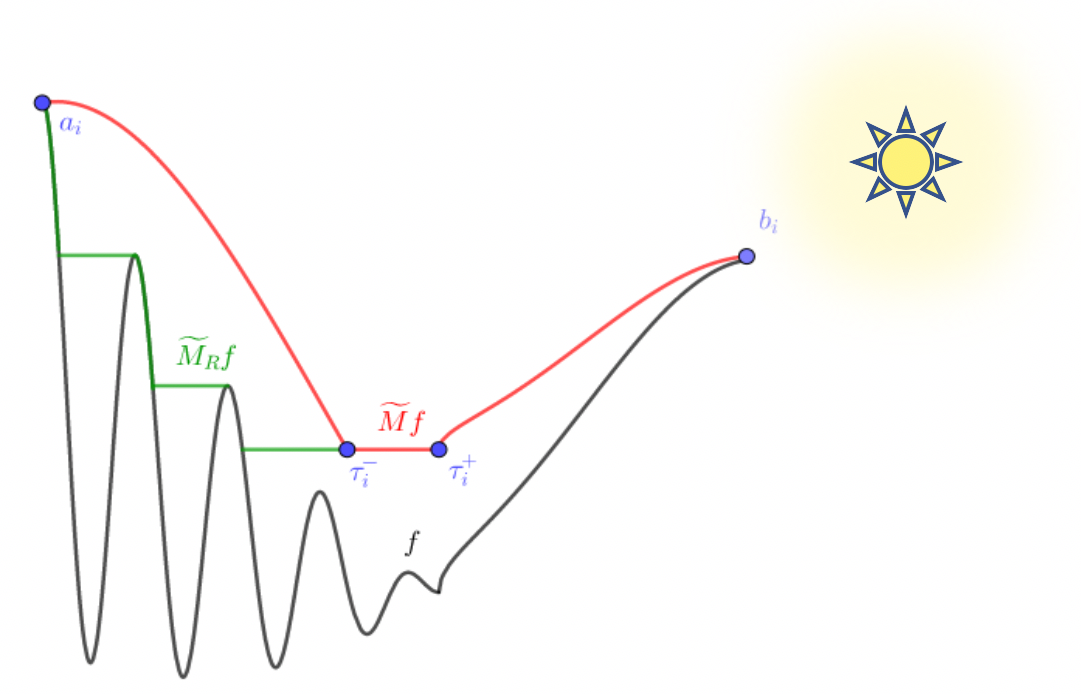} 
    \caption{The sunrise lateral maximal function $\wM_R f$ in a disconnecting interval $(a_i,b_i)$.}
\label{fig:Sunrise_pic}
\end{figure}

\smallskip

Before moving on to discuss the basic properties of these new functions, let us point out two important facts. First, in dimension $d=1$ it is not necessarily true that $\wM_Rf = M_Rf$ and $\wM_L f = M_Lf$ in the interval $(\rho, \infty)$, where $M_R$ and $M_L$ are the classical one-sided maximal operators, to the right and left, respectively (consider, for instance, $f$ being two sharp bumps to the right of $\rho$). Second, note that $\wM_Rf$ and $\wM_Lf$ are generated from $f$ indirectly, i.e. passing through $\wM f$, and it is not in principle true that the operators $f \mapsto \wM_Rf$ and $f \mapsto \wM_Lf$ are sublinear. This is a source of technical difficulty in the proof, especially in the upcoming Proposition \ref{Prop_pointwise_conv_lateral}, that will be carefully handled.

\subsection{Basic properties} \label{Sec4_basic}From the definition, for all $r \in (\rho,\infty)$ one plainly sees that  
\begin{equation}\label{20200707_13:14}
f(r) \leq \wM_Rf(r) \leq \wM f(r) \ \ {\rm and} \ \ f(r) \leq \wM_Lf(r) \leq \wM f(r),\end{equation}
and
\begin{equation}\label{20201713_09:38}
\wM f(r) = \max\Big\{\wM_Rf(r)\, , \, \wM_Lf(r)\Big\}.
\end{equation}
Also, for any $\rho < r<s < \infty$, one can show that 
\begin{equation*}
\left| \wM_R f(r) - \wM_R f(s)\right| \leq \int_r^s |f'(t)|\,\d t + \int_r^s \big|(\wM f)'(t)\big|\,\d t \,,
\end{equation*}
and the same holds for $\wM_L f$. For this one may consider the different cases when $r$ and $s$ belong to $C(f)$ or $D(f)$. This plainly implies that $\wM_R f$ and $\wM_L f$ are absolutely continuous in $(\rho,\infty)$. In particular, $\wM_R f$ and $\wM_L f$ are differentiable a.e. in $(\rho,\infty)$.

\smallskip

As before, let us define the disconnecting set $D_R(f) = D_R(f;\rho)$ and the connecting set $C_R(f) = C_R(f;\rho)$ by
\begin{equation}\label{20200713_09:46}
D_R(f) = \big\{r \in (\rho,\infty) \, : \, \wM_Rf(r) > f(r)\big\} \ \ {\rm and} \ \ C_R(f) = \big\{r \in (\rho,\infty) \, : \, \wM_R f(r) = f(r)\big\},
\end{equation}
and, analogously, we define $D_L(f) = D_L(f;\rho)$ and $C_L(f) = C_L(f;\rho)$ by
$$D_L(f) = \big\{r \in (\rho,\infty) \ : \ \wM_L f(r) > f(r)\big\}  \ \ {\rm and} \ \ C_L(f) = \big\{r \in (\rho,\infty) \ : \ \wM_L(f)(r) = f(r)\big\}.$$
We now prove a fundamental property of our construction.
\begin{proposition}[Monotonicity]\label{Monotonicity_lem}
The following monotonicity properties hold:
$$\big(\wM_Rf\big)'(r) \geq 0 \ \ {\rm a.e. \ in}\  D_R(f)\ \ \ {\rm and} \ \ \ \big(\wM_R f\big)'(r) \leq 0 \ \ {\rm a.e. \ in} \ C_R(f),$$
and 
\begin{equation*}\big(\wM_L f\big)'(r) \leq 0 \ \ {\rm a.e. \ in}\  D_L(f)\ \ \ {\rm and} \ \ \ \big(\wM_L f\big)'(r) \geq 0 \ \ {\rm a.e. \ in} \ C_L(f).
\end{equation*}
\end{proposition}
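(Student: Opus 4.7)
The plan is to prove the four monotonicity claims by decomposing $(\rho,\infty)$ according to the structure used to define $\wM_R f$ and $\wM_L f$, and analyzing each piece separately. I describe the argument for $\wM_R f$; the statements for $\wM_L f$ follow by an entirely symmetric argument, with the roles of the two sides of the minimum interval $[\tau_i^-,\tau_i^+]$ exchanged and the fact that $\wM f$ is non-increasing on $[a_i,\tau_i^-]$ playing the role analogous to the monotonicity on $[\tau_i^+, b_i]$.

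I would first dispose of the easy contributions. On the connecting set $C(f)$ one has $\wM_R f = \wM f = f$ by definition, so $C(f) \subset C_R(f)$, and the remark from Section \ref{Sec2.3_CD} that $f'(r) = 0$ a.e.\ on $C(f)$ yields $(\wM_R f)'(r) = 0$ a.e.\ there. On each interval $(a_i,b_i)$ in the decomposition \eqref{20200720_14:54}, the portion $[\tau_i^-,b_i)$ lies in $D_R(f)$, since $\wM_R f = \wM f > f$ there, and the absence-of-local-maxima property of Proposition \ref{No_local_maxima}, combined with the definition of $[\tau_i^-,\tau_i^+]$ as the interval of minima, forces $\wM f$ to be constant on $[\tau_i^-,\tau_i^+]$ and non-decreasing on $[\tau_i^+, b_i]$; thus $(\wM_R f)'(r) \geq 0$ a.e.\ on $[\tau_i^-, b_i)$, as required.

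The substantive case is the subinterval $(a_i,\tau_i^-)$, where $\wM_R f = W^i_R f = \max\{F, \wM f(\tau_i^-)\}$ with the right-sided running maximum $F(r) := \max_{r \leq t \leq \tau_i^-} f(t)$. The key structural observation, which I would isolate as a preliminary lemma, is that on the open set $\{r : f(r) < F(r)\}$ the function $F$ is locally constant (continuity of $f$ prevents a small shift of the left endpoint from changing the maximum), whereas on $\{r : f(r) = F(r)\}$, at every point of differentiability of $f$, one has $f'(r) \leq 0$ since $r$ realizes the maximum of $f$ over $[r,\tau_i^-]$. In particular $F$ is continuous, non-increasing, and $F' \leq 0$ a.e.

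Armed with this, I finish by a case split at a.e.\ differentiability point $r$ of $\wM_R f$ in $(a_i,\tau_i^-)$. If $r \in D_R(f)$, then $W^i_R f(r) > f(r)$, and either $F(r) > f(r)$ (so $F$, and hence $W^i_R f$, is locally constant at $r$) or $F(r) = f(r) < \wM f(\tau_i^-)$ (so continuity of $F$ makes $W^i_R f$ equal to the constant $\wM f(\tau_i^-)$ in a neighborhood of $r$); in either subcase $(\wM_R f)'(r) = 0$, which is $\geq 0$. If $r \in C_R(f)$, then $W^i_R f(r) = f(r)$ forces $F(r) = f(r) \geq \wM f(\tau_i^-)$; when the inequality is strict, $W^i_R f = F$ holds near $r$ and the structural observation yields $(\wM_R f)'(r) = f'(r) \leq 0$, while the level set $\{r : f(r) = \wM f(\tau_i^-)\}$ is absorbed using the standard fact that an absolutely continuous function has zero derivative a.e.\ on any of its level sets. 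The main delicate step I anticipate is the bookkeeping around this "overlap" where $F$ meets the constant floor $\wM f(\tau_i^-)$: the structural lemma for $F$ has to be applied with some care to guarantee that the $C_R(f)$ inequality is not violated on a set of positive measure there.
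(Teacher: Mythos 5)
Your proof is correct and follows essentially the same route as the paper: decompose $(\rho,\infty)$ along the sunrise intervals, use the absence of local maxima to handle $[\tau_i^-, b_i)$, and analyze $(a_i,\tau_i^-)$ via the non-increasing running maximum $W^i_R f$. Your case split on $(a_i,\tau_i^-)$ is a slightly more granular version of the paper's argument, which simply records that $W^i_R f$ is non-increasing on $(a_i,\tau_i^-)$, so $\big(\wM_R f\big)' \leq 0$ a.e.\ there, and observes directly that $\wM_R f$ is locally constant on $D_R(f) \cap D^-(f)$, giving $\big(\wM_R f\big)' = 0$ there.
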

\begin{proof} We consider $\wM_Rf$. The proof for $\wM_Lf$ is essentially analogous. Let us consider the disjoint decomposition
\begin{equation}\label{20200707_13:07}
D(f) \cap (\rho, \infty) = D^-(f) \cup D^0(f) \cup D^+(f)\,,
\end{equation}
where $D^-(f) = D^-(f;\rho)$, $D^0(f) = D^0(f;\rho)$ and $D^+(f) = D^+(f;\rho)$ are defined by
\begin{equation}\label{20200707_13:08}
D^-(f) = \bigcup_{i=1}^{\infty} (a_i, \tau_i^-) \ ; \ D^0(f) = \bigcup_{i=1}^{\infty} \Big([\tau_i^-, \tau_i^+] \cap D(f) \cap (\rho, \infty) \Big) \ \ {\rm and} \ \ D^+(f) = \bigcup_{i=1}^{\infty} (\tau_i^+, b_i).
\end{equation}
Note that $\big( D^0(f) \cup D^+(f)\big) \subset D_R(f) \subset D(f)$ and hence
\begin{equation}\label{20200711_23:20}
D_R(f) = D^0(f) \cup D^+(f) \cup \big(D_R(f) \cap D^-(f)\big).
\end{equation}
Also,
$$C_R(f) = \big(C(f) \cap (\rho, \infty)\big) \cup \big(C_R(f) \cap D^-(f)\big).$$
We claim that the derivative of $\wM_R f$ in $(\rho, \infty)$ is given by
\begin{equation}\label{20200707_11:57}
\big(\wM_R f\big)'(r) = \left\{
\begin{array}{ll}
 \big(\wM f\big)'(r)\geq 0 & {\rm for \ a.e.} \ r \in D^+(f);\\
 \big(\wM f\big)'(r) = 0 & {\rm for \ a.e.} \ r \in D^0(f);\\
 0 & {\rm for \ all} \ r \in D_R(f) \cap D^-(f);\\
  f'(r) = 0& {\rm for \ a.e.} \ r \in C(f) \cap (\rho, \infty);\\
 f'(r) \leq 0 & {\rm for \ a.e.} \ r \in C_R(f) \cap D^-(f).
\end{array}
\right.
\end{equation}

Let us look at the disconnecting set first. Since $\wM_Rf(r) = \wM f(r)$ is non-decreasing in each $(\tau_i^+, b_i)$, we find that $\big(\wM_Rf\big)'(r) = \big(\wM f\big)'(r)\geq 0$ a.e. in $D^+(f)$. In each point $r \in (\tau_i^-, \tau_i^+)$ (if this set is non-empty) we have $\wM_Rf = \wM f$ being constant in a neighborhood of $r$, and hence $\big(\wM f\big)'(r) = 0$. If $r \in D_R(f) \cap D^-(f)$, then $\wM_Rf(r)$ is also constant in a neighborhood of $r$, and we have $\big(\wM_Rf\big)'(r) = 0$.

\smallskip

As for the connecting set, if $r \in C(f) \cap (\rho,\infty)$ is a point of differentiability of $\wM_Rf$, $\wM f$ and $f$, and is not an isolated point of $C(f) \cap (\rho,\infty)$ (note that this is still a.e. in $C(f) \cap (\rho,\infty)$), we observe that $\big(\wM_R f\big)'(r) = \big(\wM f\big)'(r) = f'(r)= 0$; see the discussion in \S \ref{Sec2.3_CD}. We are left with analyzing $C_R(f) \cap D^-(f)$. Note that $W^i_Rf$ is non-increasing in $\big(a_i, \tau_i^-\big)$, which means that $\big(\wM_R f\big)'(r) = \big(W^i_Rf\big)'(r) \leq 0$ a.e. in $(a_i, \tau_i^-)$ for each $i \geq 1$, and hence for a.e. $r \in C_R(f) \cap D^-(f)$. Then, if $r \in C_R(f) \cap D^-(f)$ is a point of differentiability of $\wM_Rf$ and $f$, and is not an isolated point of $C_R(f) \cap D^-(f)$ (which is still a.e. in $C_R(f) \cap D^-(f)$) we have $\big(\wM_R f\big)'(r) = f'(r) \leq 0$. 
\end{proof}
\noindent {\sc Remark}: From the description \eqref{20200707_11:57} note that $\big(\wM_R f\big)' \in  L^1((\rho,\infty), r^{d-1}\d r)$, and so does $\big(\wM_L f\big)'$.

\subsection{Pointwise convergence} We now move to a crucial and delicate result in our strategy, the analogue of Proposition \ref{Prop_pointwise_conv} for the lateral operators $\wM_R$ and $\wM_L$. Note how the use of the sublinearity of $\wM$ allows for a relatively simple proof of Proposition \ref{Prop_pointwise_conv} (i). Unfortunately, sublinearity is a tool we do not possess here, and we must handle the situation differently. Our approach will be more of a {\it tour-de-force} one, in which we carefully study the many different building blocks and possibilities of the sunrise construction. We will split the content now into two propositions, as the proofs will be more elaborate. Recall that we assume that all functions considered here are non-negative.

\begin{proposition}[Pointwise convergence for $\wM_R$ and $\wM_L$]\label{Prop_pointwise_conv_lateral} Let $f\in W^{1,1}_{\rm rad}(\R^d)$ and $\{f_{j}\}_{j \geq 1}\subset W^{1,1}_{\rm rad}(\R^d)$ be such that $\|f_{j}-f\|_{W^{1,1}(\R^d)}\to0$ as $j\to\infty$. Then, for each $r \in (\rho,\infty)$, we have $\wM_R f_j(r) \to \wM_R f(r)$ and $\wM_L f_j(r) \to \wM_L f(r)$ as $j \to \infty$.
\end{proposition}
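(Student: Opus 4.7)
I focus on the convergence $\wM_R f_j(r)\to\wM_R f(r)$, since the $\wM_L$ statement is entirely symmetric. Fix $r\in(\rho,\infty)$. If $r\in C(f)$, then $\wM_R f(r)=\wM f(r)=f(r)$, and the pointwise sandwich $f_j(r)\le \wM_R f_j(r)\le \wM f_j(r)$ from \eqref{20200707_13:14}, combined with Proposition \ref{Prop_pointwise_conv}(i), closes this case immediately. I therefore assume $r\in(a_i,b_i)\subset D(f)$. Since $\wM f(r)-f(r)>0$ strictly, Proposition \ref{Prop_pointwise_conv}(i) gives $r\in D(f_j)$ for $j\ge j_0$, located in a unique component $(a^j,b^j)$ with parameters $\tau^{j,\pm}$ and $m^j=\wM f_j(\tau^{j,-})$. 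The basic workhorse is a stability lemma: any compact $[s,t]\subset(a_i,b_i)$ containing $r$ lies in $(a^j,b^j)$ for $j$ large, since $\inf_{[s,t]}(\wM f-f)>0$ by compactness and uniform convergence upgrades this to $\wM f_j-f_j>0$ on $[s,t]$, whence connectedness gives $[s,t]\subset(a^j,b^j)$.

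\textbf{Subcase 2a ($r\in[\tau_i^-,b_i)$, $\wM_R f(r)=\wM f(r)$).} The upper bound is automatic. For the lower bound, if $r>\tau_i^+$ I would pick, via the IVT for the continuous $\wM f$, a point $s\in(\tau_i^+,r)$ with $\wM f(s)<\wM f(r)$; the stability lemma puts $[s,r]\subset(a^j,b^j)$ and uniform convergence gives $\wM f_j(s)<\wM f_j(r)$, ruling out $r\le\tau^{j,-}$ by the non-increasing behaviour of $\wM f_j$ on $[a^j,\tau^{j,-}]$. Hence $\wM_R f_j(r)=\wM f_j(r)\to\wM f(r)$. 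When $r\in[\tau_i^-,\tau_i^+]$ (so $\wM f(r)=m_i$), I split on whether $r>\tau^{j,-}$ (trivial) or $r\le\tau^{j,-}$; in the latter I pass to subsequences $\tau^{j,-}\to\tau^*$, use $m^j\le\wM f_j(\tau_i^-)\to m_i$ to pin $\tau^*\in[\tau_i^-,\tau_i^+]$, hence $m^j\to m_i$ and $\max_{[r,\tau^{j,-}]}f_j\to\max_{[r,\tau^*]}f\le m_i$.

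\textbf{Subcase 2b ($r\in(a_i,\tau_i^-)$, $\wM_R f(r)=\max\{\max_{[r,\tau_i^-]}f,m_i\}$).} By the leftmost-minimum characterisation of $\tau_i^-$, $\wM f(r)>m_i$ strictly. I first show $r<\tau^{j,-}$ for $j$ large: otherwise a subsequence has $r\ge\tau^{j,-}$, and the case $r>\tau^{j,+}$ is incompatible with the non-decreasing behaviour of $\wM f_j$ on $[\tau^{j,+},b^j]$ once one applies the stability lemma to $[r,\tau_i^-]$ together with $\wM f_j(\tau_i^-)\to m_i<\wM f(r)$; then $r\in[\tau^{j,-},\tau^{j,+}]$ forces $\wM f_j(r)=m^j\le\wM f_j(\tau_i^-)\to m_i$, contradicting $\wM f_j(r)\to\wM f(r)>m_i$. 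With $r<\tau^{j,-}$ established, $\wM_R f_j(r)=\max\{\max_{[r,\tau^{j,-}]}f_j,m^j\}$, and the conclusion reduces to $m^j\to m_i$ and $\max_{[r,\tau^{j,-}]}f_j\to\max_{[r,\tau_i^-]}f$.

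\textbf{Main obstacle.} The hardest point is the \emph{merging scenario}: $(a^j,b^j)$ may strictly contain $(a_i,b_i)$ because two neighbouring components of $D(f_j)$ fuse across a short $C(f)$-bridge on which $\wM f_j-f_j\to 0$. Then $\tau^{j,-}$ may drift past $\tau_i^-$, even past $b_i$, and the targeted identification of the maxima is not immediate. The rescue is a transfer of monotonicity: since $\wM f_j$ is non-increasing on $[a^j,\tau^{j,-}]$, uniform convergence forces $\wM f$ to be non-increasing on the limit interval $[a_i,\tau^*]$; combined with the non-decreasing behaviour of $\wM f$ on $[\tau_i^+,b_i]$ this pins the edge configuration $\tau_i^-=\tau_i^+=b_i$ and $m_i=f(b_i)$, after which a direct computation using $f=\wM f$ on the bridge matches $\max\{\max_{[r,\tau^*]}f,m^*\}=\max_{[r,b_i]}f=\wM_R f(r)$. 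The careful enumeration of every possible limit location of $\tau^{j,-}$ --- inside $[a_i,b_i]$, on the $C(f)$-bridge, or within an adjacent component of $D(f)$ --- is the tour-de-force the authors allude to before the statement.
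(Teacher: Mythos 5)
Your case skeleton is the same as the paper's: trivial on $C(f)$, then split $D(f)\cap(\rho,\infty)$ into $D^+,D^0,D^-$, and recognise that the only real difficulty is when the $j$-th left minimum $\tau^{j,-}$ escapes $[a_i,b_i]$ to the right because neighbouring components of $D(f_j)$ merge. Where you differ is in the mechanism: the paper carries out direct $\varepsilon$-estimates in each of its subcases (notably using $\wM f(b_i)=f(b_i)$ when $\tau^{j,-}>b_i$), while you pass to subsequences $\tau^{j,-}\to\tau^*$ and argue by classifying the possible locations of $\tau^*$, using a ``transfer of monotonicity'' of $\wM f_j$ to $\wM f$. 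The subsequence device is a legitimate alternative organisation and does buy a cleaner statement of the stability lemma.

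However, there are genuine gaps. (1) In Subcase 2a with $r\in[\tau_i^-,\tau_i^+]$ and $r\le\tau^{j,-}$, you assert that $m^j\le\wM f_j(\tau_i^-)\to m_i$ pins $\tau^*\in[\tau_i^-,\tau_i^+]$; this is false when the merging scenario occurs, since then $\tau^*>b_i$ and $\wM f(\tau^*)$ can be strictly less than $m_i$ (so $m^j\not\to m_i$). The convergence of $\wM_R f_j(r)$ still holds, but only because $\max_{[r,\tau^*]}f$ attains $m_i$ at $b_i$, exactly as in Subcase 2b; you cannot separate 2a from the merging discussion the way the paragraph is written. (2) In the ``Main obstacle'' paragraph, the transfer of monotonicity forces $\tau_i^+=b_i$ and hence $m_i=f(b_i)$, but it does \emph{not} force $\tau_i^-=\tau_i^+=b_i$ as you state: $\wM f$ may be constant (equal to $m_i$) on a nondegenerate $[\tau_i^-,b_i]$ while still being non-increasing on $[a_i,\tau^*]$. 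Your final identity $\max\{\max_{[r,\tau^*]}f,m^*\}=\max_{[r,b_i]}f=\wM_R f(r)$ survives this correction (since $\max_{[\tau_i^-,b_i]}f=m_i$), but as written the intermediate claim overshoots. (3) Most importantly, the enumeration of where $\tau^*$ can fall --- inside $[a_i,b_i]$, on the bridge, or inside an adjacent disconnecting component --- is only gestured at; you yourself defer it as ``the tour-de-force the authors allude to.'' That enumeration, together with the verification that the target maximum is matched in each branch, is precisely the content the paper supplies in Subcases 3.2 and 4.2.2 and cannot be omitted. Until that enumeration is done explicitly, the proof is incomplete.
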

\begin{proof}
Let us prove the statement for $\wM_R$. The proof for $\wM_L$ is essentially analogous. Recall the decomposition given by \eqref{20200707_13:07} - \eqref{20200707_13:08}. Given $\varepsilon >0$, from Proposition \ref{Prop_pointwise_conv} (i) there exists $j_0 = j_0(\varepsilon)$ such that 
\begin{equation}\label{20200712_10:09}
|f_j(t) - f(t)| \leq \varepsilon \ \  {\rm and} \ \  \big|\wM f_j(t) - \wM f(t)\big| \leq \varepsilon
\end{equation} 
for all $j \geq j_0$ and all $t \in (\rho, \infty)$. Any mention of $j_0(\varepsilon)$ below refers to this uniform convergence. We divide our analysis into the following exhaustive list of cases.

\medskip

\noindent {\it Case} 1: $r \in C(f)$. In this case $\wM_Rf(r) = \wM f(r) = f(r)$. From Proposition \ref{Prop_pointwise_conv} (i) we know that $\wM f_j(r) \to \wM f(r)$ and that $f_j(r) \to f(r)$ as $j \to \infty$. The desired result follows from \eqref{20200707_13:14}.

\medskip

\noindent {\it Case} 2: $r \in D^+(f)$. In this case $r \in (\tau_i^+, b_i)$ for some $i \geq 1$, and we know that $\wM_Rf(r) = \wM f(r) > \max\big\{f(r), \wM f\big(\tau_i^+\big)\big\}$. Let $s$ be such that $\tau_i^+ < s < r$ and $\wM f(s) < \wM f(r)$. Then $[s,r] \subset D^+(f)$ and by Proposition \ref{Prop_pointwise_conv} (i) we have that $[s,r] \subset D(f_j)$ and $\wM f_j(s) < \wM f_j(r)$ for $j \geq j_1$. This plainly implies that $r \in D^+(f_j)$ and hence $\wM_Rf_j(r) = \wM f_j(r)$ for $j \geq j_1$. The result follows from another application of Proposition \ref{Prop_pointwise_conv} (i).

\medskip

\noindent {\it Case} 3: $r \in D^0(f)$. In this case $r \in [\tau_i^-, \tau_i^+] \cap D(f) \cap (\rho,\infty)$ for some $i \geq 1$ and we have $\wM_Rf(r) = \wM f(r) >f(r)$. In particular note that $f \not\equiv0$. Hence, we cannot have $b_i = +\infty$ since this would plainly imply $\tau_i^- = \tau_i^+ = b_i = +\infty$, contradicting our situation. We then have two subcases to consider:

\medskip

\noindent {\it Subcase} 3.1: $\tau_i^+ < b_i < +\infty$. Given $\varepsilon >0$ sufficiently small, let $\tau_i^+ <u < b_i$ be such that 
$$\wM f(r) \geq \max\big\{f(t) \ : \ t \in [r,u]\big\} + 3\varepsilon.$$
Then $[r,u] \subset D(f)$ and by Proposition \ref{Prop_pointwise_conv} (i) we have that $[r,u] \subset D(f_j)$ and $\wM f_j(u) > \wM f_j(r)$ for $j \geq j_1 \geq j_0(\varepsilon)$. We now observe two possibilities for each $j \geq j_1$: 
\begin{enumerate}
\item[(i)] if $r \in D^0(f_j) \cup D^+(f_j)$ we have that $\wM_Rf_j(r) = \wM f_j(r)$ and hence
$$\big|\wM_R f_j(r) - \wM_R f(r)\big| = \big|\wM f_j(r) - \wM f(r)\big| \leq \varepsilon.$$
\item[(ii)] if $r \in D^-(f_j)$ then, by the considerations above, the corresponding left minimum $\tau_{i_j}^-(f_j)= \tau_{i_j}^-(f_j;\rho)$ of $\wM f_j$ in the disconnecting open interval of $D(f_j) \cap (\rho,\infty)$ that contains $r$ is such that $r < \tau_{i_j}^-(f_j) < u$ and $\wM_Rf_j(r) = W^{i_j}_R f_j(r) =\wM f_j(\tau_{i_j}^-(f_j))$. In this situation we have
\begin{equation*}
\wM f(r) + \varepsilon \geq \wM f_j(r) \geq \wM f_j(\tau_{i_j}^-(f_j)) \geq \wM f(\tau_{i_j}^-(f_j)) - \varepsilon \geq \wM f(r)  - \varepsilon,
\end{equation*}
which implies that 
\begin{align*}
\big|\wM_R f_j(r) - \wM_R f(r)\big| & = \big|\wM f_j(\tau_{i_j}^-(f_j)) - \wM f(r)\big| \leq \varepsilon.
\end{align*}
\end{enumerate}

\medskip

\noindent {\it Subcase} 3.2: $\tau_i^+ = b_i < +\infty$. Let $\varepsilon >0$ be given. From Proposition \ref{Prop_pointwise_conv} (i) we have that $r \in D(f_j)$ for $j \geq j_1 \geq j_0(\varepsilon)$. We now observe three possibilities for each $j \geq j_1$:
\begin{enumerate}
\item[(i)] if $r \in D^0(f_j) \cup D^+(f_j)$ we have that $\wM_Rf_j(r) = \wM f_j(r)$ and hence
$$\big|\wM_R f_j(r) - \wM_R f(r)\big| = \big|\wM f_j(r) - \wM f(r)\big| \leq \varepsilon.$$
\item[(ii)] if $r \in D^-(f_j)$ and the corresponding left minimum $\tau_{i_j}^-(f_j)= \tau_{i_j}^-(f_j;\rho)$ is such that $r < \tau_{i_j}^-(f_j) \leq b_i$ we have 
$$\wM f(r) + \varepsilon \geq \wM f_j(r) \geq \wM_Rf_j(r) \geq \wM f_j(\tau_{i_j}^-(f_j)) \geq \wM f(\tau_{i_j}^-(f_j)) - \varepsilon = \wM f(r) - \varepsilon,$$
from which we conclude that 
\begin{equation*}
\big|\wM_R f_j(r) - \wM_R f(r)\big|  = \big|\wM_R f_j(r) - \wM f(r)\big| \leq \varepsilon.
\end{equation*}
\item[(iii)] if $r \in D^-(f_j)$ and the corresponding left minimum $\tau_{i_j}^-(f_j)= \tau_{i_j}^-(f_j;\rho)$ is such that $b_i < \tau_{i_j}^-(f_j)$ we have (recall that $\wM f(b_i) = f(b_i)$ in this situation)
$$\wM f(r) + \varepsilon \geq \wM f_j(r) \geq \wM_Rf_j(r) \geq f_j(b_i) \geq f(b_i) -\varepsilon  = \wM f(b_i) - \varepsilon = \wM f(r) - \varepsilon,$$
and again we conclude that 
\begin{equation*}
\big|\wM_R f_j(r) - \wM_R f(r)\big|  = \big|\wM_R f_j(r) - \wM f(r)\big| \leq \varepsilon.
\end{equation*}
\end{enumerate}

\medskip

\noindent {\it Case} 4: $r \in D^-(f)$. In this case $r \in (a_i, \tau_i^-)$ for some $i \geq 1$ and we have $\wM f(r) > \wM_Rf(r) = W^i_R f(r)$ defined in \eqref{20200711_14:12}. In particular $f \not\equiv 0$. We consider the following subcases:

\medskip

\noindent {\it Subcase} 4.1: $W^i_R f(r) = \max_{r \leq t \leq \tau_i^- }f(t) \geq \wM f(\tau_i^-)$. Given $\varepsilon >0$ sufficiently small, let $s$ be such that $r \leq s < \tau_i^-$ and $0 \leq W^i_R f(r) - f(s) \leq \varepsilon$. Let $u$ and $v$ be such that $r \leq  s < u < v < \tau_i^-$ and 
$$\min\{ \wM f(r), \wM f(\tau_i^-) + \varepsilon\} > \wM f(u) > \wM f(v)> \wM f(\tau_i^-).$$
Then $[r,v] \subset D^-(f)$ and by Proposition \ref{Prop_pointwise_conv} (i) we have that $[r,v] \subset D(f_j)$ and $\wM f_j(r) > \wM f_j(u) > \wM f_j(v)$ for $j \geq j_1 \geq j_0(\varepsilon)$. This implies that $r \in D^{-}(f_j)$ for $j \geq j_1$, and we again let $\tau_{i_j}^-(f_j)= \tau_{i_j}^-(f_j;\rho)$ be the corresponding left minimum. Observe that $u < \tau_{i_j}^-(f_j)$. From this we get 
\begin{align}\label{20200711_15:27}
\wM f_j(\tau_{i_j}^-(f_j)) < \wM f_j(u) \leq \wM f(u) + \varepsilon \leq \wM f(\tau_i^-) + 2\varepsilon \leq W^i_R f(r)  + 2\varepsilon,
\end{align}
and using \eqref{20200711_15:27} we also get
\begin{align}\label{20200711_23:04}
\begin{split}
 \max_{r \leq t \leq\tau_{i_j}^-(f_j)}f_j(t) &\leq  \max\left\{\max_{r \leq t \leq u}f_j(t)\,,  \max_{u \leq t \leq \tau_{i_j}^-(f_j) }\wM f_j(t) \right\}  \\
 & \leq \max\left\{ \max_{r \leq t \leq u}f(t) + \varepsilon\,, \wM f_j(u) \right\} \leq W^i_R f(r)  + 2\varepsilon.
 \end{split}
 \end{align}
From \eqref{20200711_15:27} and \eqref{20200711_23:04} we have, for $j \geq j_1$, 
\begin{align*}
W^i_R f(r) + 2\varepsilon \geq   W^{i_j}_R f_j(r) \geq f_j(s) \geq f(s) - \varepsilon \geq W^i_R f(r) - 2\varepsilon\,,
\end{align*}
which implies 
\begin{align*}
\big|\wM_R f_j(r) - \wM_R f(r)\big|  = \big|W^{i_j}_R f_j(r) - W^i_R f(r) \big| \leq 2\varepsilon.
\end{align*}

\medskip

\noindent {\it Subcase} 4.2: $W^i_R f(r) = \wM f(\tau_i^-) > \max_{r \leq t \leq \tau_i^- }f(t)$. Note that $b_i < +\infty$, otherwise we would have $\tau_i^- = \tau_i^+ = b_i = +\infty$, contradicting our situation. We analyze here the two possibilities:

\medskip

\noindent  \S 4.2.1: $\tau_i^+ < b_i < +\infty$. Given $\varepsilon >0$ sufficiently small, let $u$ be such that $r < \tau_i^- \leq \tau_i^+ < u < b_i$ and 
$$\wM f(\tau_i^-) \geq \max\big\{f(t) \ : \ t \in [r,u]\big\} + 3\varepsilon.$$
Then $[r,u] \subset D(f)$ and by Proposition \ref{Prop_pointwise_conv} (i) we have that $[r,u] \subset D(f_j)$, $\wM f_j(r) > \wM f_j(\tau_i^-)$ and $\wM f_j(\tau_i^-) < \wM f_j(u)$ for $j \geq j_1 \geq j_0(\varepsilon)$. This implies that $r \in D^{-}(f_j)$ for $j \geq j_1$ and the corresponding left minimum $\tau_{i_j}^-(f_j)= \tau_{i_j}^-(f_j;\rho)$ is such that $r < \tau_{i_j}^-(f_j) < u$. In this scenario, note that $\wM_Rf_j(r) = W^{i_j}_R f_j(r) =\wM f_j(\tau_{i_j}^-(f_j))$ and, for $j \geq j_1$, 
\begin{align*}
\wM f(\tau_{i}^-) +\varepsilon  \geq \wM f_j(\tau_{i}^-)  \geq  \wM f_j(\tau_{i_j}^-(f_j)) \geq \wM f(\tau_{i_j}^-(f_j)) - \varepsilon \geq \wM f(\tau_i^-) - \varepsilon,
\end{align*}
which implies
\begin{align*}
\big|\wM_R f_j(r) - \wM_R f(r)\big|  = \big|\wM f_j(\tau_{i_j}^-(f_j)) - \wM f(\tau_{i}^-)\big| \leq \varepsilon.
\end{align*}

\medskip

\noindent  \S 4.2.2: $\tau_i^+ = b_i < +\infty$. Given $\varepsilon >0$ sufficiently small, let $u$ and $v$ be such that $r < u < v < \tau_i^-$ and 
$$\min\{ \wM f(r), \wM f(\tau_i^-) + \varepsilon\} > \wM f(u) > \wM f(v)> \wM f(\tau_i^-).$$
Then $[r,v] \subset D^-(f)$ and by Proposition \ref{Prop_pointwise_conv} (i) we have that $[r,v] \subset D(f_j)$ and $\wM f_j(r) > \wM f_j(u) > \wM f_j(v)$ for $j \geq j_1 \geq j_0(\varepsilon)$. This implies that $r \in D^{-}(f_j)$ for $j \geq j_1$, and we again let $\tau_{i_j}^-(f_j)= \tau_{i_j}^-(f_j;\rho)$ be the corresponding left minimum. Observe that $u < \tau_{i_j}^-(f_j)$ and hence
\begin{align}\label{20200711_23:07}
\wM f_j(\tau_{i_j}^-(f_j)) < \wM f_j(u) \leq \wM f(u) + \varepsilon \leq \wM f(\tau_i^-) + 2\varepsilon.
\end{align}
Using \eqref{20200711_23:07} we get
\begin{align}\label{20200711_23:08}
\begin{split}
 \max_{r \leq t \leq\tau_{i_j}^-(f_j)}f_j(t) &\leq  \max\left\{\max_{r \leq t \leq u}f_j(t)\,,  \max_{u \leq t \leq \tau_{i_j}^-(f_j) }\wM f_j(t) \right\} \\ &\leq \max\left\{ \wM f(\tau_i^-) + \varepsilon\,, \wM f_j(u) \right\} 
  \leq\wM f(\tau_i^-) + 2\varepsilon.
  \end{split}
 \end{align}
From \eqref{20200711_23:07} and \eqref{20200711_23:08} we conclude that 
\begin{equation}\label{20200711_23:01}
W^{i_j}_R f_j(r) \leq \wM f(\tau_i^-) + 2\varepsilon.
\end{equation}
For the other inequality we proceed as follows. If $\tau_{i_j}^-(f_j) \leq b_i$ we have
$$W^{i_j}_R f_j(r) \geq \wM f_j(\tau_{i_j}^-(f_j))\geq \wM f(\tau_{i_j}^-(f_j)) - \varepsilon \geq \wM f(\tau_i^-) - \varepsilon.$$
If $\tau_{i_j}^-(f_j) > b_i$ we have  (recall that $\wM f(b_i) = f(b_i)$ in this situation)
$$W^{i_j}_R f_j(r) \geq  f_j(b_i) \geq f(b_i) - \varepsilon = \wM f(b_i) - \varepsilon \geq \wM f(\tau_i^-) - \varepsilon.$$
In either case we conclude that
\begin{equation}\label{20200711_23:00}
W^{i_j}_R f_j(r) \geq \wM f(\tau_i^-) - \varepsilon.
\end{equation}
Finally, from \eqref{20200711_23:01} and \eqref{20200711_23:00} we reach the desired conclusion
\begin{align*}
\big|\wM_R f_j(r) - \wM_R f(r)\big|  = \big|W^{i_j}_R f_j(r) - \wM f(\tau_{i}^-)\big| \leq 2\varepsilon.
\end{align*}
This completes the proof.
\end{proof}

\begin{proposition}[Pointwise convergence for the derivatives of $\wM_R$ and $\wM_L$]\label{Prop_pointwise_conv_lateral_derivative} Let $f\in W^{1,1}_{\rm rad}(\R^d)$ and $\{f_{j}\}_{j \geq 1}\subset W^{1,1}_{\rm rad}(\R^d)$ be such that $\|f_{j}-f\|_{W^{1,1}(\R^d)}\to0$ as $j\to\infty$. Then, for almost all $r \in D_R(f)$, we have $\big(\wM_R f_j\big)'(r) \to \big(\wM_R f\big)'(r)$ as $j \to \infty$, and for almost all $r \in D_L(f)$ we have $\big(\wM_L f_j\big)'(r) \to \big(\wM_L f\big)'(r)$ as $j \to \infty$.
\end{proposition}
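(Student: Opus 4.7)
The plan is to combine the explicit description of $(\wM_R f)'$ in \eqref{20200707_11:57} with the partition $D_R(f) = D^+(f) \cup D^0(f) \cup \bigl(D_R(f) \cap D^-(f)\bigr)$ recorded in \eqref{20200711_23:20}, treating each piece separately, and then to run a symmetric argument for $\wM_L$ using the mirror decomposition. Throughout I would fix $r$ outside a single null set $N_0$ so that $f,\wM f,\wM_R f$ and all of $\{f_j,\wM f_j,\wM_R f_j\}_{j \geq 1}$ are differentiable at $r$, the identifications of Proposition~\ref{Monotonicity_lem} hold at $r$ for $f$ and every $f_j$, and the a.e.\ convergence $(\wM f_j)'(r) \to (\wM f)'(r)$ from Proposition~\ref{Prop_pointwise_conv}(iii) is valid. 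The general tools used at every stage are Proposition~\ref{Prop_pointwise_conv}(i) and Proposition~\ref{Prop_pointwise_conv_lateral}, which guarantee $f_j(r), \wM f_j(r), \wM_R f_j(r)$ all converge to the corresponding values for $f$, so that any strict inequality between functional values at $r$ is inherited by $f_j$ for $j$ large.

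When $r \in D^+(f)$, say $r \in (\tau_i^+, b_i)$, I would mimic Case~2 of the proof of Proposition~\ref{Prop_pointwise_conv_lateral}: choose $s \in (\tau_i^+, r)$ with $\wM f(s) < \wM f(r)$, which exists since $\wM f(\tau_i^+) < \wM f(r)$ and $\wM f$ is continuous. For $j$ large the interval $[s,r]$ lies in $D(f_j)$ and $\wM f_j(s) < \wM f_j(r)$, which forces $r \in D^+(f_j)$. Hence $(\wM_R f_j)'(r) = (\wM f_j)'(r) \to (\wM f)'(r) = (\wM_R f)'(r)$ by Proposition~\ref{Prop_pointwise_conv}(iii). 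When $r \in D_R(f) \cap D^-(f)$, the argument of Case~4 of the proof of Proposition~\ref{Prop_pointwise_conv_lateral} already places $r$ in $D^-(f_j)$ for $j$ large; combined with the strict gap $\wM_R f_j(r) \to \wM_R f(r) > f(r)$ (propagated to $f_j$), this yields $r \in D_R(f_j) \cap D^-(f_j)$ and therefore $(\wM_R f_j)'(r) = 0 = (\wM_R f)'(r)$ identically for $j$ large.

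The subtlest case is $r \in D^0(f)$, where $(\wM_R f)'(r) = (\wM f)'(r) = 0$. Here the gap $\wM_R f(r) - f(r) = \wM f(r) - f(r) > 0$ propagates so that $r \in D_R(f_j)$ for $j$ large, but the point $r$ may jump between all three sub-regions of $D(f_j)$ as $j$ varies, and this allocation is out of direct control. The rescue is that every possible allocation produces a value of $(\wM_R f_j)'(r)$ that converges to $0$: if $r \in D^+(f_j) \cup D^0(f_j)$, then $(\wM_R f_j)'(r) = (\wM f_j)'(r) \to (\wM f)'(r) = 0$ by Proposition~\ref{Prop_pointwise_conv}(iii); if $r \in D_R(f_j) \cap D^-(f_j)$, then $(\wM_R f_j)'(r) = 0$ outright; and the remaining scenario $r \in C_R(f_j) \cap D^-(f_j)$, in which one would be faced with $(\wM_R f_j)'(r) = f_j'(r)$, is excluded precisely because $r \in D_R(f_j)$ for $j$ large.

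The statement for $\wM_L$ follows by the mirror-image decomposition $D_L(f) = D^-(f) \cup D^0(f) \cup \bigl(D_L(f) \cap D^+(f)\bigr)$ and the analogous three-case analysis, exchanging the roles of $\tau_i^-$ and $\tau_i^+$. The main conceptual obstacle is the $D^0$-case just described: the failure of sublinearity for the lateral operators forces us to argue via the structural monotonicity formula \eqref{20200707_11:57} rather than by bounding $\wM_R(f-f_j)$ directly, and the saving grace is that on this central ``plateau'' every branch of the formula collapses to the common limit $0$.
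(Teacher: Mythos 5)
Your argument is correct and follows essentially the same route as the paper's proof: the same three-way split $D_R(f) = D^+(f) \cup D^0(f) \cup \bigl(D_R(f) \cap D^-(f)\bigr)$ from \eqref{20200711_23:20}, the same use of the explicit derivative formula \eqref{20200707_11:57}, and the same appeal to Proposition~\ref{Prop_pointwise_conv}(iii) for the pieces where the derivative is inherited from $\wM f_j$. The only organizational difference is in the $D^0$ case: you fix a generic $r$ (outside a countable union of null sets) and run the exhaustive case analysis on which sub-region of $D_R(f_j)$ the point $r$ lands in, observing that each branch collapses to $0$, while the paper instead fixes a subinterval $[u,v]$ strictly inside the plateau and explicitly derives the two-branch formula for $\wM_R f_j$ on $[u,v]$ in terms of the approximate minimum $\tau_{i_j}^-(f_j)$; both reduce to the same observation that the limit is $(\wM f_j)'(r) \to 0$ or $0$ identically.
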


\begin{proof} We prove the statement for $\wM_R$ as the proof for $\wM_L$ is essentially analogous. For each $\varepsilon >0$ we keep defining $j_0(\varepsilon)$ by \eqref{20200712_10:09}. Recalling decomposition \eqref{20200711_23:20} we divide again our analysis into cases.

\medskip

\noindent {\it Case} 1: $D^+(f)$. Let us consider an interval $(\tau_i^+, b_i)$ for some $i \geq 1$. For each $\tau_i^+ < u < v < b_i$ we may choose $s$ with $\tau_i^+ < s < u < v < b_i$ such that $\wM f(s) < \wM f_j(u)$. Then $[s,v] \subset D^+(f)$ and by Proposition \ref{Prop_pointwise_conv} (i) we have that $[s,v] \subset D(f_j)$ and $\wM f_j(s) < \wM f_j(u)$ for $j \geq j_1$. This plainly implies that $[u,v] \subset D^+(f_j)$ and hence $\wM_Rf_j(r) = \wM f_j(r)$ for all $r \in [u,v]$ and $j \geq j_1$. The result follows from Proposition \ref{Prop_pointwise_conv} (iii).

\medskip

\noindent {\it Case} 2: $D^0(f)$. Since we want to prove the result almost everywhere, it is sufficient to consider only the intervals $[\tau_i^-, \tau_i^+] \cap D(f) \cap (\rho,\infty)$ where $\tau_i^- < \tau_i^+$ (in particular, this implies that $b_i < +\infty$). Let $u$ and $v$ be such that $\tau_i^- < u < v < \tau_i^+$. We consider two subcases:

\medskip

\noindent {\it Subcase} 2.1: $\tau_i^+ < b_i < +\infty$. Given $\varepsilon >0 $ sufficiently small, let $\tau_i^+ <  s < b_i$ be such that 
$$\wM f(u) = \wM f(v)  \geq \max\big\{f(t) \ : \ t \in [u,s]\big\} + 3\varepsilon.$$
From Proposition \ref{Prop_pointwise_conv} (i) we know that  $[u,s] \subset D(f_j)$ and $\wM f_j(s) > \wM f_j(\tau_i^+)$ for $j \geq j_1 \geq j_0(\varepsilon)$. Let $\tau_{i_j}^-(f_j)= \tau_{i_j}^-(f_j;\rho)$ be the corresponding left minimum of $\wM f_j$ in the disconnecting open interval of $D(f_j) \cap (\rho,\infty)$ that contains $[u,s]$. Note that $\tau_{i_j}^-(f_j) < s$ and for $r \in [u,v]$ we have
\begin{align*}
\wM_Rf_j(r) = \left\{
\begin{array}{lcc}
\wM f_j(r) & {\rm if} & \tau_{i_j}^-(f_j) \leq r \leq v;\\
W^{i_j}_R f_j(r) = \wM f_j(\tau_{i_j}^-(f_j))& {\rm if} & u \leq r < \tau_{i_j}^-(f_j).
\end{array}
\right.
\end{align*}
Then, for a.e. $r \in [u,v]$ we have
\begin{align}\label{20200711_00:58}
\begin{split}
\big(\wM_Rf_j\big)'(r) = \left\{
\begin{array}{lcc}
\big(\wM f_j\big)'(r) & {\rm if} & \tau_{i_j}^-(f_j) \leq r \leq v;\\
0& {\rm if} & u \leq r < \tau_{i_j}^-(f_j).
\end{array}
\right.
\end{split}
\end{align}
We conclude from \eqref{20200707_11:57} and Proposition \ref{Prop_pointwise_conv} (iii).

\medskip

\noindent {\it Subcase} 2.2: $\tau_i^+ = b_i < +\infty$. Let $\varepsilon >0$ be sufficiently small so that 
$$\wM f(u) = \wM f(v)  \geq \max\big\{f(t) \ : \ t \in [u,v]\big\} + 3\varepsilon.$$
From Proposition \ref{Prop_pointwise_conv} (i) we know that  $[u,v] \subset D(f_j)$ for $j \geq j_1 \geq j_0(\varepsilon)$, and we again let $\tau_{i_j}^-(f_j)= \tau_{i_j}^-(f_j;\rho)$ be the corresponding left minimum. As before we have, for $r \in [u,v]$, 
\begin{align}\label{20200711_00:49}
\begin{split}
\wM_Rf_j(r) = \left\{
\begin{array}{lcc}
\wM f_j(r) & {\rm if} & \tau_{i_j}^-(f_j) \leq r \leq v;\\
W^{i_j}_R f_j(r) & {\rm if} & u \leq r < \tau_{i_j}^-(f_j).
\end{array}
\right.
\end{split}
\end{align}
Let us take a closer look at the second possibility in \eqref{20200711_00:49}. Observe that if $u \leq r < \tau_{i_j}^-(f_j) \leq b_i$ we have 
$$W^{i_j}_R f_j(r) \geq \wM f_j(\tau_{i_j}^-(f_j)),$$ 
and if $u \leq r < b_i < \tau_{i_j}^-(f_j)$ we have 
$$W^{i_j}_R f_j(r) \geq f_j(b_i) \geq f(b_i) - \varepsilon  = \wM f(u) -  \varepsilon.$$
In either case what matters is that 
\begin{equation*}
W^{i_j}_R f_j(r) = \max\left\{ \max_{r \leq t \leq \tau_{i_j}^-(f_j)}f_j(t) \ , \ \wM f_j(\tau_{i_j}^-(f_j))\right\} = \max\left\{ \max_{v < t \leq \tau_{i_j}^-(f_j)}f_j(t) \ , \ \wM f_j(\tau_{i_j}^-(f_j))\right\} \,,
\end{equation*}
and the expression on the right-hand side is independent of $r$. Then \eqref{20200711_00:49} implies \eqref{20200711_00:58} and we conclude from \eqref{20200707_11:57} and Proposition \ref{Prop_pointwise_conv} (iii) as before.

\medskip

\noindent {\it Case} 3: $D_R(f) \cap D^-(f)$. Let $r \in D_R(f) \cap D^-(f)$. Then $r \in (a_i, \tau_i^-)$ for some $i \geq 1$. Let $s$ be such that $r < s < \tau_i^-$ and $\wM f(r) > \wM f(s)$. Then $[r,s] \subset D^-(f)$ and by Proposition \ref{Prop_pointwise_conv} (i) we have that $[r,s] \subset D(f_j)$ and $\wM f_j(r) > \wM f_j(s)$ for $j \geq j_1$. In particular, this implies that $r \in D^-(f_j)$ for $j \geq j_1$. We have already seen in Proposition \ref{Prop_pointwise_conv_lateral} that if $r \in D_R(f)$ then $r \in D_R(f_j)$ for $j \geq j_2 \geq j_1$. Hence $r \in D_R(f_j) \cap D^-(f_j)$ for $j \geq j_2$ and we conclude by using \eqref{20200707_11:57}.
\end{proof}

\section{The proof}\label{Sect_Proof}

We are now in position to move on to the proof of Theorem \ref{Thm1}. 

\subsection{Setup} Given $f\in W^{1,1}_{\rm rad}(\R^d)$ and $\{f_{j}\}_{j \geq 1}\subset W^{1,1}_{\rm rad}(\R^d)$, all non-negative, and such that $\|f_{j}-f\|_{W^{1,1}(\R^d)}\to0$ as $j\to\infty$, we want to show that 
\begin{equation*}
\big\| \nabla \wM f_j - \nabla \wM f\big\|_{L^1(\R^d)} \to 0 \ \ \ {\rm as}\ \ \ j \to \infty.
\end{equation*}
Given $\varepsilon >0$, let $\eta >0$ be given by Proposition \ref{Prop_control_origin}. Then
\begin{equation*}
\big\| \nabla \wM f_j - \nabla \wM f\big\|_{L^1(B_{\eta})} < 2 \varepsilon
\end{equation*}
for $j \geq j_1(\varepsilon, \eta)$. It is then enough to prove that 
\begin{equation*}
\big\| \nabla \wM f_j - \nabla \wM f\big\|_{L^1(\R^d \setminus B_{\eta})} \to 0 \ \ \ {\rm as}\ \ \ j \to \infty,
\end{equation*}
which is equivalent to 
\begin{equation}\label{20200713_09:35}
\big\| \big(\wM f_j\big)' -  \big(\wM f\big)'\big\|_{L^1((\eta,\infty), \, r^{d-1}\d r)} \to 0 \ \ \ {\rm as}\ \ \ j \to \infty.
\end{equation}
From now on we fix $\rho = \eta /2$ and consider the sunrise construction of the lateral operators $\wM_R$ and $\wM_L$ in Section \ref{Sunrise_sec} with respect to this parameter $\rho$. We have seen in \S \ref{Sec4_basic} that the functions $\wM_Rf$, $\wM_Lf$, $\big\{\wM_Rf_{j}\big\}_{j \geq 1}, \big\{\wM_Lf_{j}\big\}_{j \geq 1}$ are all contained in the space $\mathcal{X}$ of Lemma \ref{reduction to lateral} and hence, by the same lemma and identity \eqref{20201713_09:38}, in order to prove \eqref{20200713_09:35} is is sufficient to show that 
\begin{equation*}
\big\| \big(\wM_R f_j\big)' -  \big(\wM_R f\big)'\big\|_{L^1((\eta,\infty), \, r^{d-1}\d r)} \to 0 \ \  \ {\rm and } \ \  \  \big\| \big(\wM_L f_j\big)' -  \big(\wM_L f\big)'\big\|_{L^1((\eta,\infty), \, r^{d-1}\d r)} \to 0
\end{equation*}
as $j \to \infty$. This is what we are going to do in the remaining of this section. We shall prove it for $\wM_R$ and the proof for $\wM_L$ is essentially analogous.

\subsection{Splitting into the connecting and disconnecting sets} Recall definition \eqref{20200713_09:46}. For the rest of the section let us adopt a simple notation by writing 
$$D = D_R(f) \cap (\eta, \infty) \ ;\  D_j = D_R(f_j) \cap (\eta, \infty) \ ; \ C = C_R(f) \cap (\eta, \infty) \  ;  \ C_j = C_R(f_j) \cap (\eta, \infty).$$ 
Also in the spirit of easing the notation, we sometimes omit the argument of the functions in the integrals below when the context is clear (e.g. writing $f'$ for $f'(r)$) and sometimes use the ``little o" notation for limits (i.e. writing $\lambda_j = o(1)$ when $\lim_{j \to \infty} \lambda_j = 0$). We split our original integral into the following four pieces:
\begin{align*}
\int_{\eta}^{\infty} \big| \big(\wM_R f_j\big)' -  \big(\wM_R f\big)'\big| \,r^{d-1}\,\d r &= \int_{C \cap C_j} + \int_{D \cap C_j} + \int_{C \cap D_j} + \int_{D \cap D_j}\\
& =: (I)_j + (II)_j + (III)_j + (IV)_j.
\end{align*}
Our objective is to show that each of these pieces is $o(1)$ as $j \to \infty$ (note that each of these pieces is non-negative). In what follows the reader should have in mind all times the description \eqref{20200707_11:57} for the derivative of $\wM_R f$. Two of the integral pieces above are particularly simple to analyze, and we clear them out first.

\subsubsection{The term $(I)_j$} By our hypotheses we have
\begin{equation*}
(I)_j = \int_{C \cap C_j} \big| \big(\wM_R f_j\big)' -  \big(\wM_R f\big)'\big| \,r^{d-1}\,\d r = \int_{C \cap C_j} | f_j' -  f'| \,r^{d-1}\,\d r = o(1).
\end{equation*}

\subsubsection{The term $(II)_j$} \label{5.2.2_termII} From Proposition \ref{Prop_pointwise_conv_lateral}, if $r \in D$ then $r \in D_j$ for $j$ large, and hence $\chi_{D \cap C_j}(r) \to 0$ as $j \to \infty$. Therefore, by our hypotheses and dominated convergence we have
\begin{align}\label{20200713_23:28}
\begin{split}
(II)_j &= \int_{D \cap C_j} \big| \big(\wM_R f_j\big)' -  \big(\wM_R f\big)'\big| \,r^{d-1}\,\d r = \int_{\eta}^{\infty} \big| f_j' -  \big(\wM_R f\big)'\big| \,\chi_{D \cap C_j}(r) \,r^{d-1}\,\d r\\
& \leq \int_{\eta}^{\infty} \big| f_j' -  f'| \,\chi_{D \cap C_j}(r) \,r^{d-1}\,\d r + \int_{\eta}^{\infty} \Big(| f'| + \big|\big(\wM_R f\big)'\big|\Big)  \,\chi_{D \cap C_j}(r) \,r^{d-1}\,\d r = o(1).
\end{split}
\end{align}

\vspace{0.05cm}

\subsection{Brezis-Lieb reduction and some useful identities} 

\subsubsection{Using the convergence of the derivatives} The {\it raison d'\^{e}tre} of Proposition \ref{Prop_pointwise_conv_lateral_derivative} is to allow for an application of the classical Brezis-Lieb lemma \cite{BL} to conclude that 
\begin{align}\label{20200713_23:25}
(II)_j + (IV)_j = \int_{D} \big| \big(\wM_R f_j\big)' -  \big(\wM_R f\big)'\big| \,r^{d-1}\,\d r \to 0
\end{align}
as $j \to \infty$ if and only if 
\begin{align}\label{20200713_23:26}
\int_{D} \big| \big(\wM_R f_j\big)' \big| \,r^{d-1}\,\d r \to \int_{D} \big| \big(\wM_R f\big)' \big| \,r^{d-1}\,\d r = \int_{D}  \big(\wM_R f\big)'  \,r^{d-1}\,\d r 
\end{align}
as $j \to \infty$. The equality on the right-hand side of \eqref{20200713_23:26} is due to Proposition \ref{Monotonicity_lem}. From Proposition \ref{Prop_pointwise_conv_lateral_derivative} and Fatou's lemma we already have
\begin{equation}\label{20200713_00:00}
 \int_{D}  \big(\wM_R f\big)'  \,r^{d-1}\,\d r  = \int_{D} \big|\big(\wM_R f\big)' \big| \,r^{d-1}\,\d r \leq \liminf_{j \to \infty} \int_{D}  \big| \big(\wM_R f_j\big)' \big| \,r^{d-1}\,\d r.
 \end{equation}
Let us decompose the open set $D \subset (\eta, \infty)$ into a disjoint union of open intervals:
\begin{align}\label{20200713_16:46}
D = \bigcup_{i = 1}^{\infty} (\alpha_i, \beta_i).
\end{align}
We may have one of the left endpoints in \eqref{20200713_16:46} being $\eta$ and, if that is the case, let us agree that $\eta = \alpha_1$. Note that, as in \eqref{20200706_10:01}, we have
\begin{equation}\label{20200714_09:54}
(d-1) \int_{\eta}^{\infty} \wM_R f(r)\, r^{d-2}\, \d r  \leq \int_{\eta}^{\infty}  \big|\big(\wM_R f\big)'\!(t)\big|\, t^{d-1} \,\d t < \infty.
\end{equation}
Recall also \eqref{20200706_10:24}. Using integration by parts (and dominated convergence with \eqref{20200714_09:54} to properly justify the limiting process in the potentially infinite sum) we have
\begin{align} \label{20200713_12:44} 
 \int_{D}  \big(\wM_R f\big)' \,r^{d-1}\,\d r  & = \sum_{i=1}^{\infty} \int_{\alpha_i}^{\beta_i} \big(\wM_R f\big)' \,r^{d-1}\,\d r \nonumber\\
&  = \sum_{i=1}^{\infty} \left( \left( \wM_R f(\beta_i) \,\beta_i^{d-1} - \wM_R f(\alpha_i) \,\alpha_i^{d-1}\right) - (d-1) \int_{\alpha_i}^{\beta_i} \wM_R f \,\,r^{d-2}\,\d r \right) \nonumber\\
& = \gamma(f) + \sum_{i=1}^{\infty} \left( \left( f(\beta_i) \,\beta_i^{d-1} -  f(\alpha_i) \,\alpha_i^{d-1}\right) - (d-1) \int_{\alpha_i}^{\beta_i}  f\, \,r^{d-2}\,\d r \right) \\
&  \ \ \ \ \ \ \ \ \ \ \ \ \ \ \ \ \ \ \ \ \ + (d-1) \int_D \big(f - \wM_R f\big) \,r^{d-2}\,\d \nonumber \\
& = \gamma(f)  + \int_{D}  f'\, \,r^{d-1}\,\d r  + (d-1) \int_D \big(f - \wM_R f\big) \,r^{d-2}\,\d r,\nonumber
\end{align}
where we introduced the term
\begin{equation}\label{20200713_12:59}
\gamma(f) := \left\{
\begin{array}{lc}
f(\eta) \,\eta^{d-1} - \wM_R f(\eta)\,\eta^{d-1}& {\rm if} \ \eta = \alpha_1;\\
0& {\rm otherwise}.
\end{array}
\right.
\end{equation}
Similarly, we may decompose $D_j = \bigcup_{i = 1}^{\infty} (\alpha_i^j, \beta_i^j)$, with the agreement that if $\eta$ is a left endpoint in this decomposition then $\eta = \alpha_1^j$. We define $\gamma(f_j)$ as in \eqref{20200713_12:59} and proceed as in \eqref{20200713_12:44} to find
\begin{align}\label{20200713_13:19}
\begin{split}
\int_{D_j} \ \big(\wM_R f_j\big)'  \,r^{d-1}\,\d r = \gamma(f_j)  + \int_{D_j}  f_j'\, \,r^{d-1}\,\d r  + (d-1) \int_{D_j} \big(f_j - \wM_R f_j\big) \,r^{d-2}\,\d r.
\end{split}
\end{align}
Combining \eqref{20200713_12:44} and \eqref{20200713_13:19} we arrive at the following identity
\begin{equation}\label{20200713_18:30}
\int_{D_j} \big(\wM_R f_j\big)'  \,r^{d-1}\,\d r - \int_{D_j}  f_j'\, \,r^{d-1}\,\d r  = \int_{D}  \big(\wM_R f\big)'  \,r^{d-1}\,\d r - \int_{D}  f'\, \,r^{d-1}\,\d r + \lambda_j,
\end{equation}
where
\begin{equation}\label{20200714_00:17}
\lambda_j :=  \big(\gamma(f_j) - \gamma(f)\big) + \left(\int_{D_j} \big(f_j - \wM_R f_j\big) \,r^{d-2}\,\d r - \int_{D} \big(f - \wM_R f\big) \,r^{d-2}\,\d r  \right).
\end{equation}

\subsubsection{Smallness of the remainder: analysis of $\lambda_j$} We now claim that $\lambda_j$ defined in \eqref{20200714_00:17} verifies
\begin{align}\label{20200714_10:04}
\lambda_j = o(1).
\end{align}
Note first that $\gamma(f_j) \to \gamma(f)$ as $j \to \infty$. This is an immediate consequence of the pointwise convergences $f_j(\eta) \to f(\eta)$ and $\wM_R f_j(\eta)\to \wM_R f(\eta)$ as $j \to \infty$. The second observation is that 
\begin{align}\label{20200713_18:03}
\int_{D_j} \big(f_j - \wM_R f_j\big) \,r^{d-2}\,\d r \to  \int_{D} \big(f - \wM_R f\big) \,r^{d-2}\,\d r.
\end{align}
as $j \to \infty$. This requires some work to verify. Start by writing the difference in the following form
\begin{align}\label{20200713_17:00}
\begin{split}
&\int_{D_j} \big(f_j - \wM_R f_j\big) \,r^{d-2}\,\d r - \int_{D} \big(f - \wM_R f\big) \,r^{d-2}\,\d r \\
&  =  \int_{\eta}^{\infty} \Big(\big(f_j - \wM_R f_j\big) - \big(f - \wM_R f\big)\Big) \,\chi_D(r)\,r^{d-2}\,\d r  +  \int_{\eta}^{\infty}  \big(f_j - \wM_R f_j\big) \,\chi_{C \cap D_j}(r) \,r^{d-2}\,\d r.
\end{split}
\end{align}
Let $N>0$ be large. Using \eqref{20200707_13:14} and the sublinearity of $\wM$, the portion of each of the two integrals on the right-hand side of \eqref{20200713_17:00} evaluated from $N$ to $\infty$ is bounded in absolute value by 
\begin{equation}\label{20200713_17:38}
4\int_N^{\infty} \Big(\wM f + \wM(f- f_j)\Big)\,r^{d-2}\,\d r.
\end{equation}
A computation as in \eqref{20200706_10:01}, together with \eqref{20200702_12:24}, shows that \eqref{20200713_17:38} is bounded by
\begin{align*}
&\lesssim_d \int_{N}^{\infty}  \Big( \big|\big(\wM f\big)'(t)\big|  + \big|\big(\wM(f- f_j)\big)'(t)\big|\Big)\, t^{d-1} \,\d t \\
& \lesssim_d \int_{N}^{\infty}   \big|\big(\wM f\big)'(t)\big| \, t^{d-1} \,\d t + \int_{0}^{\infty} |(f-f_j)'(t)| \, t^{d-1} \,\d t,
\end{align*}
and by our hypotheses this is small if $N$ is large and $j$ is large. In the interval $[\eta, N]$ all the functions $\wM f_j$ are uniformly bounded (by Proposition \ref{Prop_pointwise_conv} (i)). By applying Proposition \ref{Prop_pointwise_conv} (i), Proposition \ref{Prop_pointwise_conv_lateral} and dominated convergence, we find that the portion of each of the two integrals on the right-hand side of \eqref{20200713_17:00} evaluated from $\eta$ to $N$ converges to zero. This establishes \eqref{20200713_18:03} and hence \eqref{20200714_10:04}.

\subsubsection{Final preparation} We need yet another useful identity to run our upcoming dichotomy scheme. Using Proposition \ref{Monotonicity_lem} (multiple times), and identity \eqref{20200713_18:30} - \eqref{20200714_10:04} (in the third line below), we have
\begin{align}\label{20200713_23:22}
 &\!\! \int_{D}  \big| \big(\wM_R f_j\big)' \big| \,r^{d-1}\,\d r =\!\! \int_{D \cap D_j} \! \big| \big(\wM_R f_j\big)' \big| \,r^{d-1}\d r  + \!\! \int_{D \cap C_j} \! \big| \big(\wM_R f_j\big)' \big| \,r^{d-1}\d r + (III)_j - \!(III)_j\nonumber\\
& = \int_{D \cap D_j}  \big(\wM_R f_j\big)'  \,r^{d-1}\,\d r - \int_{D \cap C_j} f_j' \,\,r^{d-1}\,\d r  + \int_{C \cap D_j}  \Big(\big(\wM_R f_j\big)' -   f'\Big) \,r^{d-1}\,\d r - (III)_j\nonumber\\
& =  \int_{D_j}  \big(\wM_R f_j\big)'  \,r^{d-1}\,\d r -  \int_{D \cap C_j} f_j' \,\,r^{d-1}\,\d r - \int_{C \cap D_j}   f' \,r^{d-1}\,\d r - (III)_j\nonumber\\
& = \int_{D}  \big(\wM_R f\big)'  \,r^{d-1}\,\d r - \int_{D}  f' \,r^{d-1}\,\d r + \int_{D_j}  f_j' \,r^{d-1}\,\d r + o(1)\\
&  \ \ \ \ \ \  \ \ \ \ \ \ \ \ \ \ \ \ \ \ -  \int_{D \cap C_j} f_j' \,\,r^{d-1}\,\d r - \int_{C \cap D_j}   f' \,r^{d-1}\,\d r - (III)_j\nonumber\\
& = \int_{D}  \big(\wM_R f\big)' \,r^{d-1}\,\d r + \int_{D_j} (f_j' - f') \,\,r^{d-1}\,\d r - \int_{D\cap C_j}  (f_j' + f') \,r^{d-1}\,\d r  - (III)_j + o(1)\nonumber\\
& = \int_{D} \big(\wM_R f\big)'  \,r^{d-1}\,\d r - (III)_j + o(1).\nonumber
\end{align}
Note that in the last passage above we used the fact that $\chi_{D \cap C_j}(r) \to 0$ and dominated convergence as in \eqref{20200713_23:28}.

\subsection{Finale: the dichotomy} Let us take a closer look at identity \eqref{20200713_18:30}. For each $j \geq 1$ we have the following dichotomy: either
\begin{equation}\label{20200713_23:00}
\int_{C \cap D_j} \big(\wM_R f_j\big)'  \,r^{d-1}\,\d r \leq \int_{C \cap D_j}  f_j'\, \,r^{d-1}\,\d r 
\end{equation}
or 
\begin{equation}\label{20200713_23:31}
\int_{D \cap D_j} \big(\wM_R f_j\big)' \,r^{d-1}\,\d r \leq \int_{D \cap D_j}  f_j'\, \,r^{d-1}\,\d r + \int_{D} \big(\wM_R f\big)'  \,r^{d-1}\,\d r - \int_{D}  f'\, \,r^{d-1}\,\d r + \lambda_j.
\end{equation}

\subsubsection{Case 1} Assume that we go over the subsequence of $j$’s such that \eqref{20200713_23:00} holds. Using Proposition \ref{Monotonicity_lem} and \eqref{20200713_23:00} we get
\begin{align*}
(III)_j = \int_{C \cap D_j}  \Big(\big(\wM_R f_j\big)' -   f'\Big) \,r^{d-1}\,\d r \leq \int_{C \cap D_j}  \big(f_j' -   f'\big) \,r^{d-1}\,\d r = o(1).
\end{align*}
Then, from \eqref{20200713_23:25}, \eqref{20200713_23:26} and \eqref{20200713_23:22} we find that 
$$(II)_j + (IV)_j = o(1).$$
Then $(IV)_j = o(1)$ and the proof is complete in this case.

\subsubsection{Case 2} Assume now that we go over the subsequence of $j$’s such that \eqref{20200713_23:31} holds. Using Proposition \ref{Monotonicity_lem}, \eqref{20200714_10:04} and \eqref{20200713_23:31}, we get
\begin{align}\label{20200714_00:22}
 \int_{D} & \big| \big(\wM_R f_j\big)' \big| \,r^{d-1}\,\d r = \int_{D \cap D_j}   \big(\wM_R f_j\big)'  \,r^{d-1}\d r  -  \int_{D \cap C_j}   f_j' \,\,r^{d-1}\d r \nonumber\\
& \leq \int_{D \cap D_j}  f_j'\, \,r^{d-1}\,\d r + \int_{D} \big(\wM_R f\big)'  \,r^{d-1}\,\d r - \int_{D}  f'\, \,r^{d-1}\,\d r  -  \int_{D \cap C_j}   f_j' \,\,r^{d-1}\d r + o(1)  \nonumber\\
& = \int_{D} \big(\wM_R f\big)'  \,r^{d-1}\,\d r + \int_{D \cap D_j}  \big(f_j' - f'\big) \,r^{d-1}\,\d r - \int_{D \cap C_j} \big(f_j' + f'\big) \,r^{d-1}\,\d r + o(1)\\
& = \int_{D} \big(\wM_R f\big)'  \,r^{d-1}\,\d r + o(1). \nonumber
\end{align}
Note in the last passage the use of $\chi_{D \cap C_j}(r) \to 0$ and dominated convergence as in \eqref{20200713_23:28}. It follows from \eqref{20200714_00:22} that, along our subsequence of $j$’s, \begin{equation}\label{20200713_00:01}
\limsup_{j \to \infty} \int_{D}  \big| \big(\wM_R f_j\big)' \big| \,r^{d-1}\,\d r \leq \int_{D} \big|\big(\wM_R f\big)' \big| \,r^{d-1}\,\d r. 
\end{equation}
From \eqref{20200713_00:00} and \eqref{20200713_00:01} we arrive at \eqref{20200713_23:26}, and hence at \eqref{20200713_23:25}. That is,
$$(II)_j + (IV)_j = o(1).$$
Then $(IV)_j = o(1)$, and from \eqref{20200713_23:26} and \eqref{20200713_23:22} we find that $(III)_j = o(1)$ along this subsequence. This completes the proof. 

\section{Sunrise strategy reviewed: the core abstract elements} \label{Sec_strategy}A posteriori, let us take a moment to reflect on some of the main ingredients of our sunrise strategy in general terms. It should be clear by now that it is a one-dimensional mechanism, but part of its power relies on the fact that it can be applied to multidimensional maximal operators, when these act of subspaces of $W^{1,1}$ that can be identified with one-dimensional spaces.

\smallskip

Assume that we are working on a space $W^{1,1}(I, \d \mu)$, where $I \subset \R$ is an open interval or $I = \mathbb{S}^1$, and $\mu$ is a non-negative measure on $I$ such that $\mu$ and the Lebesgue measure (or arclength measure in the case of $\mathbb{S}^1$) are mutually absolutely continuous. It will be also convenient to assume that the Radon-Nikodym derivative $\frac{\d \mu}{\d x}$ is an absolutely continuous function on $I$. The cases we have in mind are: $(I, \d\mu) = (\R, \d x); \big((0,\infty), r^{d-1}\d r \big)$ for $d \geq 2$; $\big(\mathbb{S}^{1}, \d \theta \big)$; and $\big((0,\pi), (\sin \theta)^{d-1}\d \theta \big)$ for $d \geq 2$. The second option, as we have seen, appears associated to the subspace $W^{1,1}_{\rm rad}(\R^d)$ while the fourth option is associated to the subspace $W^{1,1}_{\rm pol}(\mathbb{S}^d)$. 

\smallskip

For $f\in W^{1,1}(I, \d \mu)$, that we assume non-negative and absolutely continuous in compact subsets of $I$, we let $\frak{M}$ be a maximal operator acting on $f$ such that $\frak{M}f$ is a continuous function defined on $I$. We make the additional assumption that $\frak{M}f$ is weakly differentiable and verifies the a priori bound
\begin{equation}\label{20200720_09:53}
\big\| (\frak{M}f)'\big\|_{L^1(I, \d \mu)} \lesssim_{I, \mu} \|f\|_{W^{1,1}(I, \d \mu)}.
\end{equation}
In particular, by \eqref{20200720_09:53}, $\frak{M}f$ is also absolutely continuous in compact subsets of $I$, and hence differentiable a.e. in $I$. 

\smallskip

The sunrise strategy aims to establish the continuity of the map $f \mapsto (\frak{M}f)'$, from $W^{1,1}(I, \d \mu)$ to $L^1(I, \d \mu)$. Assume that $f_j \to f$ in $W^{1,1}(I, \d \mu)$ as $j \to \infty$ (all $f_j$'s non-negative and absolutely continuous in compact subsets of $I$). As we have seen in the proof of Theorem \ref{Thm1}, the following five properties are the core elements that make the method work:
\begin{enumerate}
\item[(P1)] {\it Absence of local maxima in the disconnecting set}: $\frak{M}f$ does not have strict local maxima in the set $\{\frak{M}f > f\}$ (analogue of Proposition \ref{No_local_maxima}). 

\smallskip

\item[(P2)] {\it Convergence properties}: we have $f_j \to f$ and $\frak{M}f_j \to \frak{M}f$ pointwise in $I$ (uniformly, away from the potential singularities) and $(\frak{M}f_j)' \to (\frak{M}f)'$ pointwise a.e. in $\{\frak{M}f > f\}$ (analogue of Proposition \ref{Prop_pointwise_conv} (i) and (iii)).

\smallskip

\item[(P3)] {\it Flatness in the connecting set}: we have $f' = 0$ for a.e. point in the set $\{\frak{M}f = f\}$. This is necessary for the lateral sunrise operators to have the desired monotonicity properties of Proposition \ref{Monotonicity_lem}.

\smallskip

\item[(P4)] {\it Singularity control}: uniform control of $(\frak{M}f_j)'$ near the potential singularities (analogue of Proposition \ref{Prop_control_origin}).

\smallskip

\item[(P5)] {\it Smallness of the remainder}: Control of the remainder terms coming from the integration by parts in the final part of the proof (analogue of \eqref{20200714_00:17} - \eqref{20200714_10:04}).
 
\end{enumerate}
If these five core abstract elements are in place, the proof of Theorem \ref{Thm1} can be adapted to this situation. Note that Lemma \ref{reduction to lateral} is already in place to absorb the general setup, and our sunrise construction of the lateral operators in Section \ref{Sunrise_sec} can be performed with respect to any open interval $(\rho_1, \rho_2)$ whose closure is contained in $I \subset \R$ (this includes the whole $\R$ itself if $I = \R$), and with respect to the whole $I$ in the case $I = \mathbb{S}^{1}$.

\section{Further applications}\label{Further_app}
In this section we briefly discuss how our sunrise strategy can be applied to establish the endpoint Sobolev continuity of the other maximal operators discussed in \S \ref{Sec_Further_app_Intro}. For simplicity, the presentation here will be kept on a broad level, and we shall only indicate the major steps or changes required for each adaptation in order to verify properties (P1) - (P5) above. We omit some of the routine details.

\subsection{Proof of Theorem \ref{Thm2}} \label{Section_polar} We start by recalling that the space $W^{1,1}_{\rm pol}(\mathbb{S}^d)$ can be naturally associated to $W^{1,1}\big((0,\pi), (\sin \theta)^{d-1}\d \theta\big)$, where $\theta = \theta(\xi) = d({\bf e}, \xi)$ is the polar angle; see \cite[Lemma 13]{CGR}. For $d \geq 2$, we shall refer to $f(\xi)$ when viewing $f \in W^{1,1}_{\rm pol}(\mathbb{S}^d)$ on $\mathbb{S}^{d}$ and to $f(\theta)$ when viewing it on $(0,\pi)$. In this sense we may write
$$\|\nabla f\|_{L^1(\mathbb{S}^d)} = \omega_{d-1} \int_0^\pi |f'(\theta)| \,(\sin \theta)^{d-1}\,\d \theta.$$
Observe that inequality \eqref{20200720_12:56} accounts for \eqref{20200720_09:53} above. Properties (P1) and (P3) can be proved exactly as in \S \ref{Sec2.3_CD}. 

\smallskip

In order to verify the remaining properties, let us first consider the case $d \geq 2$. Let $g \in W^{1,1}_{\rm pol}(\mathbb{S}^d) \simeq W^{1,1}\big((0,\pi), (\sin \theta)^{d-1}\d \theta \big)$ be a given non-negative function, absolutely continuous in compact subsets of $(0,\pi)$. We start with a suitable replacement for \eqref{20200706_10:01} since we do not have the ``vanishing at infinity" situation anymore. For $0 < \theta \leq \pi/4$ we have 
\begin{align*}
 \int_0^{\frac{\pi}{4}} \widetilde{\mathcal{M}}g(\theta)\,& (\sin \theta)^{d-2} \,\cos \theta \, \d \theta   =  \int_0^{\frac{\pi}{4}} \left( \int_{\theta}^{\frac{\pi}{4}} - \big(\widetilde{\mathcal{M}}g\big)'\!(t)\, \d t + \widetilde{\mathcal{M}}g(\tfrac{\pi}{4})\right) \, (\sin \theta)^{d-2}\,\cos \theta \, \d \theta  \\
& \lesssim_d  \int_0^{\frac{\pi}{4}} \left( \int_{\theta}^{\frac{\pi}{4}} \big| \big(\widetilde{\mathcal{M}}g\big)'(t)\big|\,\d t\right) \, (\sin \theta)^{d-2}\,\cos \theta \, \d \theta   + \widetilde{\mathcal{M}}g(\tfrac{\pi}{4})\\
& =  \int_0^{\frac{\pi}{4}}  \int_{0}^{t} (\sin \theta)^{d-2}\,\cos \theta \,\,  \big| \big(\widetilde{\mathcal{M}}g\big)'(t)\big|\,\d \theta\, \d t      + \widetilde{\mathcal{M}}g(\tfrac{\pi}{4})\\
& \simeq_d \int_0^{\frac{\pi}{4}}  \big| \big(\widetilde{\mathcal{M}}g\big)'(t)\big|\, (\sin t)^{d-1} \,\d t   + \widetilde{\mathcal{M}}g(\tfrac{\pi}{4}) \\
& < \infty.
\end{align*}
Note the use of \eqref{20200720_12:56} in the last line above. An analogous computation holds in the interval $(\tfrac{3\pi}{4}, \pi)$, and also if $\widetilde{\mathcal{M}}g(\theta)$ is replaced by $g(\theta)$. If follows that the functions $\theta \mapsto g(\theta)(\sin \theta)^{d-1}$ and $\theta \mapsto \widetilde{\mathcal{M}}g(\theta)(\sin \theta)^{d-1}$ have integrable derivatives in $(0,\pi)$ and hence, by the fundamental theorem of calculus, the limits of these functions as $\theta \to 0^+$ or $\theta \to \pi^-$ must exist. If any of these limits were not zero, we would have a contradiction to the fact that $g$ and $\widetilde{\mathcal{M}}g$ belong to $L^{d/(d-1)}(\mathbb{S}^{d})$ (the former by Sobolev embedding, and the latter by the boundedness of $\widetilde{\mathcal{M}}$ in $L^{d/(d-1)}(\mathbb{S}^{d})$). Therefore
\begin{equation}\label{20200720_14:18}
\lim_{\theta \to 0^+} \!g(\theta)(\sin \theta)^{d-1} = \!\lim_{\theta \to \pi^-} \!g(\theta)(\sin \theta)^{d-1} = \!\lim_{\theta \to 0^+} \!\!\widetilde{\mathcal{M}}g(\theta)(\sin \theta)^{d-1} =\! \lim_{\theta \to \pi^-} \!\!\widetilde{\mathcal{M}}g(\theta)(\sin \theta)^{d-1} = 0.
\end{equation}

\smallskip

Given $\lambda >0$ recall now the weak-type estimate
\begin{equation}\label{20200720_13:26}
\sigma\{ \xi \in \mathbb{S}^{d} \ : \ \widetilde{\mathcal{M}}g(\xi) \geq \lambda\} \lesssim_d \frac{\|g\|_{L^1(\mathbb{S}^{d})} }{\lambda}.
\end{equation}
 Fix an interval $J_{\eta} := [\eta , \pi - \eta] \subset (0,\pi)$, say with $\eta < \tfrac{\pi}{4}$. Let $\theta_{\eta} \in J_{\eta}$ be such that $\widetilde{\mathcal{M}}g(\theta_{\eta}) = \min_{\theta \in J_{\eta}}\widetilde{\mathcal{M}}g(\theta)$. Then, taking $\lambda =  \widetilde{\mathcal{M}}g(\theta_{\eta})$ in \eqref{20200720_13:26}, we find
 \begin{equation*}
 \widetilde{\mathcal{M}}g(\theta_{\eta})  \lesssim_d \|g\|_{L^1(\mathbb{S}^{d})}.
 \end{equation*}
Hence, for any $\theta \in J_{\eta}$, we have
\begin{align}\label{20200720_14:05}
 \widetilde{\mathcal{M}}g(\theta) &= \int_{\theta_{\eta}}^{\theta}  \big(\widetilde{\mathcal{M}}g\big)'(t)\,\d t +  \widetilde{\mathcal{M}}g(\theta_{\eta})\nonumber\\
 & \lesssim_{\eta,d} \int_{0}^{\pi}  \big|\big(\widetilde{\mathcal{M}}g\big)'(t)\big|\,(\sin t)^{d-1}\d t +  \widetilde{\mathcal{M}}g(\theta_{\eta})\\
 & \lesssim_{\eta,d} \|\nabla g\|_{L^1(\mathbb{S}^{d})} + \|g\|_{L^1(\mathbb{S}^{d})}.\nonumber
\end{align}
Of course, estimates \eqref{20200720_13:26} and \eqref{20200720_14:05} also hold with $g$ replacing  $\widetilde{\mathcal{M}}g$. Then, if $f_j \to f$ in $W^{1,1}(\mathbb{S}^{d})$, an application of \eqref{20200720_14:05} with $g = f_j - f$ yields (note the sublinearity of  $\widetilde{\mathcal{M}}$) that $f_j \to f$ and $\widetilde{\mathcal{M}}f_j \to \widetilde{\mathcal{M}}f$ uniformly in the interval $J_{\eta} := [\eta , \pi - \eta]$. This is the analogue of Proposition \ref{Prop_pointwise_conv} (i). Parts (ii) and (iii) of Proposition \ref{Prop_pointwise_conv} can be proved in the same way as we did in \S \ref{Point_conv_sec} using \cite[Lemma 5]{CGR}, which is the spherical analogue of Proposition \ref{Prop_derivative_inside}. This builds up to property (P2). 

\smallskip

The analogue of Proposition \ref{Prop_control_origin}, the uniform control of $\nabla \widetilde{\mathcal{M}}f_j$ near the potential singularities (in this case, the poles ${\bf e}$ and $-{\bf e}$), can be proved in the exact same way using \eqref{20200720_14:18} and the pointwise convergence. This is property (P4). Then we proceed with the sunrise construction with respect to an open interval $(\rho, \pi - \rho)$, with $\rho$ small, and adapt the scheme of proof in Section \ref{Sect_Proof}. Note the presence of potentially two remainder terms in \eqref{20200713_12:59} coming from the integration by parts, and the proof of \eqref{20200714_10:04} will follow from directly from dominated convergence and the fact that all quantities involved are uniformly bounded in the considered interval by another application of \eqref{20200720_14:05}. This is property (P5), which completes the skeleton of the proof. We omit the remaining details of the adaptation.

\smallskip

The case $d=1$ is in fact simpler. Here our functions $f_j$ and $f$ will be absolutely continuous in the whole $\mathbb{S}^1$, and so will $\widetilde{\mathcal{M}}f_j$ and $\widetilde{\mathcal{M}}f$. Proceeding as in \eqref{20200720_13:26} and \eqref{20200720_14:05} we deduce the pointwise convergence, which is now uniform in $\mathbb{S}^1$. The analogues of Proposition \ref{Prop_pointwise_conv} (ii) and (iii) also hold. There is no need for Proposition \ref{Prop_control_origin} (property (P4)) since we do not have any singularities. We can carry out the sunrise construction with respect to the whole space $\mathbb{S}^1$ (here we must choose an orientation a priori, say clockwise, to read the decomposition \eqref{20200720_14:54}; note that the set $\widetilde{\mathcal{M}}f = f$ is always non-empty) and proceed smoothly as in Section \ref{Sect_Proof}.

\subsection{Proof of Theorem \ref{Thm_non_tang_HL}} \label{Non_tang_HL}

\subsubsection{The $\alpha =\frac{1}{3}$ threshold: a geometric argument}  \label{Threshold_sec}If $d \geq 2$ and $f \in W^{1,1}_{\rm rad}(\R^d)$ we have seen in \S \ref{Sec_basic_reg} and \S \ref{Split_Sec} that we may assume $f$ is continuous in $\R^d \setminus \{0\}$ (and non-negative for our purposes). In this case, one can verify that $M^{\alpha}f$ is also continuous in $\R^d \setminus \{0\}$, and we may also consider a degenerate cube of side zero, that is, just the point $x$ itself, in our definition of $M^{\alpha}$. As in \S \ref{Sec2.3_CD} we may define the $d$-dimensional disconnecting set 
\begin{equation*}
\mathcal{D}^{\alpha}(f) = \{x \in \R^d\setminus \{0\}\ : \ M^{\alpha} f(x) > f(x)\},
\end{equation*}
and its corresponding one-dimensional radial version
\begin{equation*}
D^{\alpha}(f) = \{ |x|\ : \ x \in \mathcal{D}^{\alpha}(f)\}.
\end{equation*}
These are open sets in $\R^d \setminus \{0\}$ and $(0,\infty)$, respectively. We define the connecting sets $\mathcal{C}^{\alpha}(f):= \big(\R^d\setminus \{0\}\big) \setminus \mathcal{D}^{\alpha}(f) $ and $C^{\alpha}(f):= (0,\infty) \setminus D^{\alpha}(f)$. In dimension $d=1$ we define the sets $D^{\alpha}(f)$ and its complement $C^{\alpha}(f)$ over the whole $\R$, for $f \in W^{1,1}(\R)$. With start by proving the analogue of Proposition \ref{No_local_maxima} in this case, a result that involves some insightful geometric considerations coming from the fact that $\alpha \geq \frac{1}{3}$.
\begin{proposition} \label{Prop_new_arg_cubes}Let $d\geq 2$ and $f \in W^{1,1}_{\rm rad}(\R^d)$. The function $M^{\alpha} f(r)$ does not have a strict local maximum in $D^{\alpha}(f)$.
\end{proposition}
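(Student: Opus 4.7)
The plan is to argue by contradiction, following the template of Proposition \ref{No_local_maxima}, with $\alpha \geq 1/3$ serving as the sharp geometric threshold that compensates for the fact that a point $y$ lying in a cube $Q$ need not lie in the $\alpha$-dilate $\alpha Q$.

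Suppose for contradiction that $r_0 \in D^\alpha(f)$ is a strict local maximum witnessed by $s_0 < r_0 < t_0$ with $[s_0, t_0] \subset D^\alpha(f)$, and fix any $x_0 \in \R^d$ with $|x_0| = r_0$. First I would produce a maximizing closed cube $Q^*$: one with $x_0 \in \alpha Q^*$, positive side length $\ell^* > 0$, and $\intav{Q^*} f = M^\alpha f(x_0)$. Existence follows from a standard compactness argument: along any maximizing sequence $\{Q_n\}$, side lengths are bounded (otherwise $\intav{Q_n} f \leq \|f\|_{L^1(\R^d)}/|Q_n|$ would tend to zero, contradicting $M^\alpha f(x_0) > 0$); then centers are bounded because $x_0 \in \alpha Q_n$; and orientations lie in the compact group $O(d)$. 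The limit cube $Q^*$ has $\ell^* > 0$ because $x_0 \in \mc D^\alpha(f)$ forbids the degenerate limit $Q^* = \{x_0\}$.

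Exactly as in Proposition \ref{No_local_maxima}, one first checks that the radial projection of the $\alpha$-core satisfies $\{|y| : y \in \alpha Q^*\} \subset (s_0, t_0)$: this set is a closed interval containing $r_0$, and for every $y \in \alpha Q^*$ the cube $Q^*$ is admissible for $M^\alpha f(y)$, giving $M^\alpha f(|y|) \geq \intav{Q^*} f = M^\alpha f(r_0)$, which together with $M^\alpha f(s_0), M^\alpha f(t_0) < M^\alpha f(r_0)$ and continuity of $M^\alpha f$ forces the inclusion. If in fact the \emph{full} cube satisfies $\{|y| : y \in Q^*\} \subset (s_0, t_0)$, then $Q^* \subset \mc D^\alpha(f)$ by the hypothesis $[s_0, t_0] \subset D^\alpha(f)$; using the local maximum property $M^\alpha f(y) \leq M^\alpha f(x_0)$ pointwise on $Q^*$ together with the strict inequality $f < M^\alpha f$ on $Q^*$, averaging gives
\begin{equation*}
M^\alpha f(x_0) = \intav{Q^*} f \; < \; \intav{Q^*} M^\alpha f \; \leq \; M^\alpha f(x_0),
\end{equation*}
a contradiction.

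The remaining case, which is the heart of the matter, is when $\{|y| : y \in Q^*\}$ exits $(s_0, t_0)$; since $\{|y| : y \in \alpha Q^*\} \subset (s_0, t_0)$ already, the exit happens in $Q^* \setminus \alpha Q^*$, and (WLOG, after swapping roles of $s_0$ and $t_0$) there is $y^* \in Q^* \setminus \alpha Q^*$ with $|y^*| = t_0$. The goal is to show $M^\alpha f(y^*) \geq M^\alpha f(x_0)$, from which radiality of $M^\alpha f$ gives $M^\alpha f(t_0) \geq M^\alpha f(r_0)$, contradicting the strict local maximum. For $\alpha = 1$ this is automatic because $y^* \in Q^*$ directly legitimizes $Q^*$ as an averaging cube at $y^*$; for $\alpha < 1$ one must produce a larger cube $\hat Q \supseteq Q^*$ with $y^* \in \alpha \hat Q$ and $\intav{\hat Q} f \geq \intav{Q^*} f$. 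The threshold $\alpha \geq 1/3$ is precisely the geometric condition that makes such an enlargement viable with controlled volume ratio: in one dimension, for $Q^* = [y_0 - r, y_0 + r]$ and $y^* = y_0 + r$, the cube $\hat Q := [y_0 - r, y_0 + 2r]$ has $\alpha \hat Q = [y_0 + r(1-3\alpha)/2, y_0 + r(1+3\alpha)/2]$, which contains $y^* = y_0 + r$ precisely when $\alpha \geq 1/3$; an analogous construction aligned with the line from the center of $Q^*$ to $y^*$ handles the higher-dimensional case. The hard part, and the main obstacle, is the companion averaging estimate $\intav{\hat Q} f \geq \intav{Q^*} f$: this does \emph{not} follow from the maximality of $Q^*$ (which would yield the opposite inequality) and must instead exploit the radial symmetry of $f$ together with the way $Q^*$ straddles the sphere of radius $t_0$, via a careful choice of $\hat Q$ in a one-parameter family of enlargements or a reflection/averaging argument across that sphere.
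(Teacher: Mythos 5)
Your setup, the compactness argument for a maximizing cube $Q^*$, and the reduction $\{|y| : y \in \alpha Q^*\} \subset (s_0,t_0)$ are all sound, and your Case (i) (when the full $Q^*$ projects radially into $(s_0,t_0)$) is essentially the argument of Proposition \ref{No_local_maxima}. The problem is Case (ii), and you are right to flag it yourself: the companion estimate $\intav{\hat Q} f \geq \intav{Q^*} f$ for an \emph{enlargement} $\hat Q \supseteq Q^*$ is not just ``the hard part'' — there is no mechanism available to produce it. Maximality of $Q^*$ only bounds from \emph{above} the averages over cubes $Q$ with $x_0 \in \alpha Q$; it gives no lower bound for larger cubes, and radial symmetry of $f$ alone does not force the average over an eccentric enlargement straddling a sphere to exceed the average over $Q^*$. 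Concretely, nothing prevents $f$ from decaying steeply just outside $Q^*$ in the radial directions you would add, killing the desired inequality. ``A reflection/averaging argument across that sphere'' is an aspiration, not a step, and so your proposal has a genuine gap precisely where you suspected.

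The paper's proof resolves this by going in the \emph{opposite} direction: rather than enlarging the maximizing cube, it \emph{shrinks} it. Starting from $Q_0 = Q^*$, it performs iterated dyadic subdivision into families $\mathcal{A}_k$ of $2^{dk}$ congruent subcubes; since $f$ is continuous and (as one checks) not constant on $Q_0$, there is a minimal $k \geq 1$ and $Q_k \in \mathcal{A}_k$ with $\intav{Q_k} f > \intav{Q_0} f$, and along the genealogical chain $Q_k \subset Q_{k-1} \subset \cdots \subset Q_0$ the minimality of $k$ forces $\intav{Q_i} f = \intav{Q_0} f$ for $i<k$. The geometry enters exactly where you expected the $1/3$ threshold: for $\alpha \geq \tfrac13$, a dyadic child $Q_{i+1}$ of $Q_i$ satisfies $\alpha Q_i \cap \alpha Q_{i+1} \neq \emptyset$, so $\mathcal{Y} = \bigcup_{i=0}^k \alpha Q_i$ is connected, $M^\alpha f \geq \intav{Q_0} f = M^\alpha f(r_0)$ on all of $\mathcal{Y}$, and the inequality is strict on $\alpha Q_k$. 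Projecting radially gives a connected set $Y \ni r_0$ in $(0,\infty)$ on which $M^\alpha f(r) \geq M^\alpha f(r_0)$ with strict inequality somewhere — contradicting the strict local maximum. The crucial point, and what distinguishes shrinking from enlarging, is that subdivision gives one-sided \emph{control of averages for free} (some child must be at least as large; if all children are equal one recurses; a strictly larger child exists eventually by non-constancy and continuity), whereas enlargement gives no such control. If you want to salvage your case split, you should replace the enlargement step in Case (ii) by this dyadic shrinking argument; in fact once you have it, the case split becomes unnecessary and the subdivision argument covers both cases at once.
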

\begin{proof}
Assume there is a point $r_0 \in D^{\alpha}(f)$ for which there exist $s_0$ and $t_0$ with $s_0 < r_0 < t_0$, $[s_0,t_0] \subset D^{\alpha}(f)$, such that $\wM f(r) \leq \wM f(r_0)$ for all $r \in [s_0,t_0]$ and $\wM f(s_0), \wM f(t_0) < \wM f(r_0)$. Let $x_0 \in \R^d$ be such that $|x_0| = r_0$. Let $Q_0$ be a cube such that $x_0 \in \alpha Q_0$ and 
\begin{equation*}
M^{\alpha}f(x_0) =  \intav{Q_0} f(y)\,\d y.
\end{equation*}
Observe that $Q_0$ has a positive side since $x_0 \in D^{\alpha}(f)$. Note that for any $x \in \alpha Q_0$ we have $M^{\alpha}f(x) \geq M^{\alpha}f(x_0)$, and hence $|x| \in [s_0,t_0]$ and $M^{\alpha}f(x) = M^{\alpha}f(x_0)$. This is due to the fact that the set $\{ |x| \, : \, x \in  \alpha Q_0\}$ contains $|x_0| = r_0$ and is connected. In particular, this implies that $f$ is not constant in $Q_0$, since this would contradict the fact that $M^{\alpha}f(x) >f(x)$ when $x$ is the center of $Q_0$.

\smallskip

Throughout the rest of the proof we only consider cubes with sides parallel to those of $Q_0$ (in fact, only dyadic cubes starting from $Q_0$). Let $\mathcal{A}_0 = \{Q_0\}$ and proceed inductively by defining $\mathcal{A}_k$ as the family obtained by partitioning each cube in $\mathcal{A}_{k-1}$ into $2^{d}$ dyadic cubes. Then $\mathcal{A}_k$ has $2^{dk}$ cubes of side $2^{-k}$ times the original side of $Q_0$. Since $f$ is continuous in $\R^d \setminus\{0\}$ and not constant in $Q_0$, there exists $k \geq 1$ such that the family $\mathcal{A}_k$ has a cube $Q_k$ over which we have 
\begin{equation}\label{20200722_11:19}
\intav{Q_k} f(y)\,\d y > \intav{Q_0} f(y)\,\d y  = M^{\alpha}f(x_0).
\end{equation}
Choose such $k$ minimal. We consider the genealogical sequence
$$Q_k \subset Q_{k-1} \subset \ldots Q_1 \subset Q_0,$$ 
where $Q_i \in \mathcal{A}_i$, and $Q_i$ is the parent of $Q_{i+1}$ for $i = 0,1,\ldots, k-1$. From the minimality of $k$, note that for $i =0,1, \ldots k-1$ we have
\begin{equation}\label{20200722_11:38}
 \intav{Q_i} f(y)\,\d y = \intav{Q_0} f(y)\,\d y  = M^{\alpha}f(x_0).
\end{equation}
Observe that we could not have a strictly smaller average in \eqref{20200722_11:38}, otherwise another average in the same family would be strictly larger, contradicting the minimality of $k$. 

\smallskip

If $\alpha \geq \frac{1}{3}$ we have the following relevant geometric property (recall our cubes are closed):
$$\alpha Q_i \cap \alpha Q_{i+1} \neq \emptyset$$
for any $i = 0,1,\ldots, k-1$. This means that the set $\mathcal{Y} = \cup_{i=0}^{k} \alpha Q_i$ is connected in $\R^d$ and hence its one-dimensional version, excluding the origin, $Y = \{|x|\, : \, x \in \mathcal{Y} \setminus\{0\}\}$ is also connected in $(0,\infty)$. If $x \in \mathcal{Y} \setminus\{0\}$ is such that $x \in \alpha Q_i$ for some $i =0,1, \ldots k-1$, by \eqref{20200722_11:38} we have
\begin{equation*}
 M^{\alpha} f(x) \geq \intav{Q_i} f(y)\,\d y = \intav{Q_0} f(y)\,\d y  = M^{\alpha}f(x_0).
\end{equation*}
If $x \in \alpha Q_k$, by \eqref{20200722_11:19} we have
$$M^{\alpha} f(x) \geq \intav{Q_k} f(y)\,\d y > \intav{Q_0} f(y)\,\d y  = M^{\alpha}f(x_0).$$
Hence $Y$ is a connected set in $(0,\infty)$ (i.e. an interval) such that: (i) it contains $r_0 = |x_0|$; (ii) $M^{\alpha} f(r) \geq M^{\alpha} f(r_0)$ for every $r \in Y$; (iii) there is a point $r_k = |x|$ (with $x \in \alpha Q_k$) in $Y$ such that $M^{\alpha} f(r_k) > M^{\alpha} f(r_0)$. This contradicts the fact that $r_0$ was a strict local maximum.
\end{proof}
\noindent{\sc Remark:} The proof of Proposition \ref{Prop_new_arg_cubes} can be modified to the case of dimension $d=1$ and a function $f:\R \to \R$ that is continuous and of bounded variation. In this case we also have $M^{\alpha} f$ continuous and a strict local maximum in the disconnecting set would have $M^{\alpha} f$ realized in a bounded and non-denegerate interval. This provides an alternative approach to \cite{Ra} in order to prove \eqref{20200720_13:11}.

\begin{figure}
  \centering
  \includegraphics[height=7.5cm]{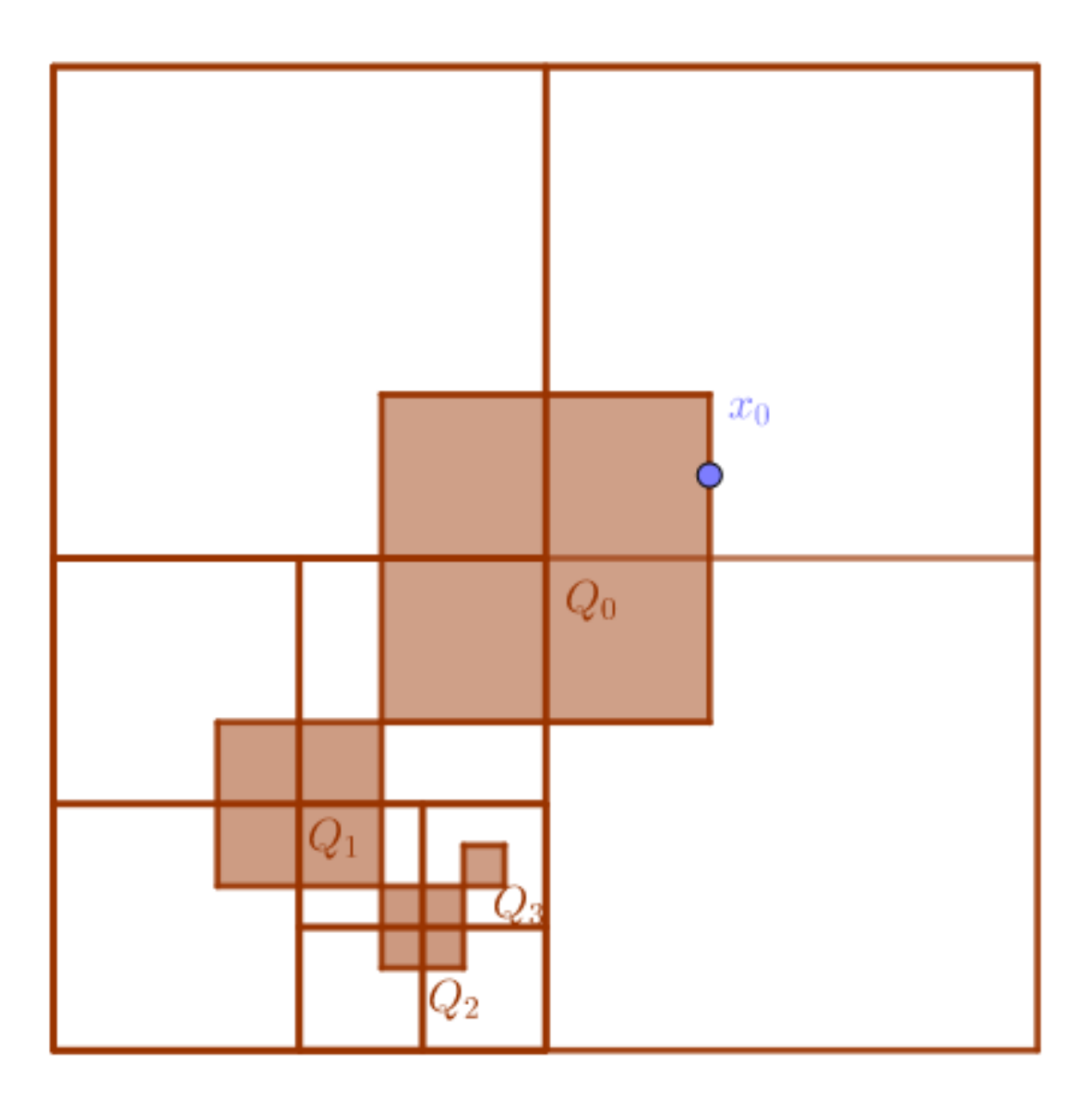} 
    \caption{Illustration of the construction in the case $\alpha =\frac{1}{3}$. The dyadic cubes $Q_0, Q_1, Q_2, Q_3$ are in white, and the colored cubes represent $\alpha Q_i$ ($i =0,1,2,3$).}
\label{fig:Cubos}
\end{figure}

\smallskip

We now proceed to the proof of Theorem \ref{Thm_non_tang_HL}.

\subsubsection{Proof of Theorem \ref{Thm_non_tang_HL}: boundedness} \label{Bound_NTHL}We first briefly consider the boundedness claim in part (ii). Here $d \geq 2$. Observe first that 
\begin{equation}\label{20200722_13:08}
M^{\alpha}f(x) \lesssim_{d, \alpha} \wM f(x).
\end{equation}
One now proceeds via the following steps:

\smallskip

\noindent {\it Step} 1. Show that $M^{\alpha}f$ is locally Lipschitz in the disconnecting set $\mathcal{D}^{\alpha}(f)$. For this, note that every $x \in \mathcal{D}^{\alpha}(f)$ has a neighborhood $x \in U_x \subset \mathcal{D}^{\alpha}(f)$ in which the cubes that realize the maximal function for any $y \in U_x$ are of size bounded by below. Take two points $y,z \in U_x$ and compare their maximal functions by using translated cubes and the fact that the difference quotients are uniformly bounded in $L^1$ by a multiple of the $L^1$-norm of the gradient of $f$. Hence $M^{\alpha}f$ is differentiable a.e. in $\mathcal{D}^{\alpha}(f)$.

\smallskip

\noindent {\it Step} 2. Follow line-by-line the mechanism of proof of the first two authors in \cite[Theorem 1, \S 2.1]{CGR} to prove that 
\begin{equation}\label{20200726_08:54}
\int_{D^{\alpha}(f)} \big| \big(M^{\alpha}f\big)'(r)\big| \, r^{d-1}\,\d r \lesssim_{d, \alpha} \int_{0}^{\infty} |f'(r)| \, r^{d-1}\,\d r.
\end{equation}
This scheme, which in \cite{CGR} is used for maximal functions of convolution type, only requires the control \eqref{20200722_13:08}, the bound \eqref{20200702_12:24}, and the absence of local maxima in the disconnecting set given by Proposition \ref{Prop_new_arg_cubes}. 

\smallskip

\noindent {\it Step} 3. Follow line-by-line the argument in \cite[\S 5.4]{CS} to show that $M^{\alpha}f(r)$ is weakly differentiable in $(0,\infty)$ with weak derivative given by $\chi_{C^{\alpha}(f)}f' + \chi_{D^{\alpha}(f)}(M^{\alpha}f)'$. Conclude that $M^{\alpha}f(x)$ is weakly differentiable in $\R^d$ by the discussion in \S \ref{Sec_basic_reg} (see \cite[Lemma 4]{CGR}) and that the desired bound
\begin{equation*}
\|\nabla M^{\alpha}f\|_{L^1(\R^d)} \lesssim_{d, \alpha} \, \|\nabla f\|_{L^1(\R^d)}
\end{equation*}
follows from \eqref{20200726_08:54}.

\subsubsection{Proof of Theorem \ref{Thm_non_tang_HL}: continuity}  Let us look at properties (P1) - (P5) described in Section \ref{Sec_strategy}. We have already established (P1). Let us move to property (P2). The uniform pointwise convergence $M^{\alpha}f_j(r) \to M^{\alpha}f(r)$ follows from the sublinearity of $M^{\alpha}$, together with \eqref{20200722_13:08} and \eqref{20200724_11:39}. For the convergence of the derivatives a.e. in the disconnecting set $D^{\alpha}(f)$ one may start establishing an analogue of Proposition \ref{Prop_derivative_inside} to move the derivative inside an average over a ``good" cube; this follows with the same proof, that only uses translations in $\R^d$. One also needs the analogue of Proposition \ref{Prop_pointwise_conv} (ii) on accumulating sequences of ``good cubes". Here the proof is also the same, and one may think of parametrizing the cubes by its center, its side and its orientation (say, with a set of $d$ orthogonal vectors in $\mathbb{S}^{d-1}$). This leads to the desired analogue of Proposition \ref{Prop_pointwise_conv} (iii).

\smallskip

Establishing (P3) requires a brief computation and we do it for $d\geq 2$  in the next proposition (the case $d\geq 1$ and $I = \R$ being easier and following via the same reasoning). 
\begin{proposition}\label{Prop_discon_}
Let $\alpha >0$, $d \geq 2$ and $f \in W^{1,1}_{\rm rad}(\R^d)$. Let $r_0>0$ be a point of differentiability of $f(r)$ such that $f'(r_0) \neq 0$. Then $r_0 \in D^{\alpha}(f)$.
\end{proposition}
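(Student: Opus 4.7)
The statement is equivalent to exhibiting a closed cube $Q$ with some point $x_0$ of norm $r_0$ lying in $\alpha Q$ and $\intav_Q f\,\d y > f(r_0)$. The plan is to construct a small ``radial" test cube. Fix $x_0 = r_0 e_1$ and, for $\ell>0$ small, take $Q_\ell$ to be the axis-aligned cube of side $\ell$ centered at $(r_0 + s) e_1$, where $s := \tfrac{\alpha\ell}{2}\,\sgn(f'(r_0))$. Since $\alpha Q_\ell$ is the closed cube of side $\alpha\ell$ with the same center, and $|x_0 - (r_0+s)e_1|_\infty = |s| = \alpha\ell/2$, the point $x_0$ sits exactly on the boundary of $\alpha Q_\ell$, so $x_0 \in \alpha Q_\ell$.

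The core computation is to expand $\intav_{Q_\ell}(|y| - r_0)\,\d y$ to first order in $\ell$. Parameterizing $y = (r_0 + s + z_1)e_1 + \sum_{i\geq 2} z_i e_i$ with $z_i \in [-\ell/2,\ell/2]$, and choosing $\ell$ small enough that $r_0 + s + z_1 > 0$ throughout, one has
$$|y| = (r_0 + s + z_1) + \frac{|z'|^2}{2(r_0 + s + z_1)} + O\!\left(\tfrac{\ell^4}{r_0^3}\right),$$
with $z' = (z_2,\dots,z_d)$. Integration over $[-\ell/2,\ell/2]^d$ kills the $z_1$ contribution by symmetry, so $\intav_{Q_\ell}(|y|-r_0)\,\d y = s + O(\ell^2/r_0)$. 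Next, I invoke the differentiability hypothesis at $r_0$ and write $f(r) = f(r_0) + f'(r_0)(r-r_0) + \eta(r)(r-r_0)$ with $\eta(r) \to 0$ as $r \to r_0$. Since $|y|-r_0 = O(\ell)$ uniformly on $Q_\ell$ and $E(\ell) := \sup_{y \in Q_\ell}|\eta(|y|)| \to 0$ as $\ell \to 0^+$, I conclude
$$\intav_{Q_\ell} f(y)\,\d y - f(r_0) = f'(r_0)\,s + O(\ell^2/r_0) + O(\ell\, E(\ell)) = |f'(r_0)|\,\tfrac{\alpha\ell}{2} + o(\ell),$$
which is strictly positive for $\ell$ small enough. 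Hence $M^\alpha f(r_0) \geq \intav_{Q_\ell} f > f(r_0)$, i.e. $r_0 \in D^\alpha(f)$.

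The main obstacle to be careful about is that when $\alpha < 1$ the cube $Q_\ell$ straddles the sphere $\{|y| = r_0\}$, so $|y| - r_0$ changes sign on $Q_\ell$ and it is not obvious that the first-order shift $s$ survives averaging. The expansion above pins down precisely that the spherical-curvature correction enters only at order $O(\ell^2/r_0)$, leaving the linear term $s = \pm\alpha\ell/2$ as the dominant contribution. The argument uses only the pointwise differentiability of $f$ at $r_0$, consistent with the minimal regularity available in $W^{1,1}_{\rm rad}(\R^d)$.
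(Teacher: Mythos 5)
Your proof is correct and takes essentially the same approach as the paper: place an axis-aligned cube $Q$ offset along the $e_1$-direction by $\alpha$ times its half-side (so that $x_0$ lies on the boundary of $\alpha Q$) and show that the first-order radial drift in the average dominates as the side length tends to zero. The only technical difference is in how the spherical-curvature correction is bounded — you expand $|y|$ explicitly to second order and absorb the $O(\ell^2)$ term, while the paper instead compares $|y|$ to $y_1$ (using $|y|\geq y_1$ when $f'(r_0)>0$ and $|y|\leq y_1 + \varepsilon h$ when $f'(r_0)<0$) to reduce to an exact one-dimensional computation; both handle the same issue and your version avoids the case split on $\sgn f'(r_0)$.
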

\begin{proof}
Assume first that $f'(r_0) = c >0$. Take a point $x_0 = (r_0, 0, \ldots, 0) \in \R^d$. For $h >0$, we consider a cube $Q_h$ with sides parallel to the usual axes, with side length $2h$, and center $z_0 = (r_0 + \alpha h, 0, \ldots, 0)$. Note that $x_0$ belongs to the boundary of $\alpha Q_h$. The idea is to have $Q_h$ ``to the right of $x_0$" as much as possible. If $\alpha \geq1$, we see that this cube is completely to the right of $x_0$ and for $h$ small we can easily infer that $\intav{Q_h} f > f(x_0)$. If $\alpha <1$, part of this cube will be ``to the left of $x_0$" and we must be a bit more careful. Fix $\varepsilon >0$ small (say, with $ \varepsilon < \min\{c,1\} $ to begin with). Then 
\begin{equation}\label{20200730_22:15}
f(r_0 + s) \geq f(r_0) + (c - \varepsilon)s  \ \ {\rm and} \ \ f(r_0 - s) \geq f(r_0) - (c + \varepsilon)s
\end{equation}
for $|s| \leq s_0(\varepsilon)$. Assume $h$ is sufficiently small so that $\big||y| - r_0\big| \leq s_0(\varepsilon)$ for all $y \in Q_h$. Then, letting $y = (y_1, y_2, \ldots, y_d)$ be our variable in $\R^d$, using \eqref{20200730_22:15} and the basic fact that $|y| \geq y_1$ we get 
\begin{align}\label{20200724_11:16}
\begin{split}
\intav{Q_h} f(y)\, \d y  & -  f(x_0)  = \frac{1}{|Q_h|} \left( \int_{y \in Q_h \, : \, |y| \geq r_0} f(y)\, \d y  + \int_{y \in Q_h\,:\,|y| < r_0} f(y)\, \d y\right) -  f(x_0) \\
& \geq \frac{1}{|Q_h|} \left( \int_{y \in Q_h\, :\, |y| \geq r_0} (c - \varepsilon) (|y| - r_0)\, \d y  + \int_{y \in Q_h\, :\,|y| < r_0} (c + \varepsilon)(|y| - r_0)\, \d y\right)\\
&  \geq \frac{1}{|Q_h|} \left( \int_{y \in Q_h\, :\, y_1 \geq r_0} (c - \varepsilon) (y_1 - r_0)\, \d y  + \int_{y \in Q_h\, :\,y_1 < r_0} (c + \varepsilon)(y_1 - r_0)\, \d y\right)\\
& = \frac{1}{2h} \left(  \int_{r_0}^{r_0 + \alpha h + h} (c - \varepsilon) (y_1 - r_0)\, \d y_1 + \int_{r_0 + \alpha h - h}^{r_0}(c + \varepsilon) (y_1 - r_0)\, \d y_1\right)\\
& = \frac{1}{2h}  \left( (c - \varepsilon) \frac{(\alpha h  +h)^2}{2} - (c +\varepsilon) \frac{(\alpha h -h)^2}{2}\right)\\
& = \frac{h( 2 \alpha c -  \varepsilon \alpha^2 - \varepsilon)}{2}.
\end{split}
\end{align}
The latter is strictly positive as long as we choose $\varepsilon < 2 \alpha c/ (\alpha^2 +1)$, which is clearly possible if $\alpha >0$. A similar argument shows that if $f'(r_0) = c <0$ then $r_0 \in D^{\alpha}(f)$. Here we choose $z_0 = (r_0 - \alpha h, 0, \ldots, 0)$, and choose $h$ small so that if $y \in Q_h$ then $y_1 \geq r_0 - \alpha h - h \geq  \frac{r_0}{2}$ and 
\begin{equation}\label{20200730_22:40}
|y| \leq \sqrt{y_1^2 + (d-1)h^2} \leq \left(y_1 + \frac{(d-1)h^2}{2y_1}\right) \leq y_1 + \varepsilon h.
\end{equation}
We use \eqref{20200730_22:40} in the analogue of passage \eqref{20200724_11:16}.
\end{proof}

Property (P4) is not needed in the case $d=1$, whereas in the case $d\geq 2$ we can prove it following the same outline of Proposition \ref{Prop_control_origin}, with minor adjustments to allow for a dependence on $\alpha$. The we perform the surnrise construction, in the case $d=1$ with respect to the whole $\R$, and in the case $d\geq2$ as we already did, in an interval $(\rho, \infty)$. The proof in Section \ref{Sect_Proof} goes through identically, as \eqref{20200722_13:08} can be used to prove the analogue of \eqref{20200714_10:04} (property (P5)).

\subsection{Proof of Theorem \ref{Thm_non_tang_heat_flow}} \label{Proof_heat_flow} We start by observing that, for any $\alpha \geq 0$, we have the pointwise bound (see \cite[Chapter II, Eq. (3.18)]{SteinWeiss})
\begin{equation}\label{20200728_10:00}
M^{\alpha}_{\varphi}f(x) \lesssim_{d, \alpha} \wM f(x).
\end{equation}
In the rest of the proof we focus in the case $d\geq 2$. The case $d=1$ is simpler and requires only minor modifications. We start with the usual setup, in which our $f \in W^{1,1}_{\rm rad}(\R^d)$ is non-negative and continuous in $\R^d \setminus \{0\}$, and one can verify that $M^{\alpha}_{\varphi}f$ is also radial and continuous in $\R^d \setminus \{0\}$.

\subsubsection{Absence of local maxima} Define the disconnecting sets $\mathcal{D}^{\alpha}_{\varphi}(f)$ (in $\R^d \setminus \{0\})$ and $D^{\alpha}_{\varphi}(f)$ (in $(0,\infty)$), and the connecting sets $\mathcal{C}^{\alpha}_{\varphi}(f)$ and $C^{\alpha}_{\varphi}(f)$ as we already did in \S \ref{Sec2.3_CD} or \S \ref{Threshold_sec}. We first establish property (P1), the analogue of Proposition \ref{No_local_maxima}. 
\begin{proposition} \label{Prop_no_max_heat_flow}Let $\alpha > 0$, $d\geq 2$ and $f \in W^{1,1}_{\rm rad}(\R^d)$. The function $M^{\alpha}_{\varphi} f(r)$ does not have a strict local maximum in $D^{\alpha}_{\varphi}(f)$.
\end{proposition}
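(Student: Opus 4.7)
The plan is to argue by contradiction, following the pattern of Proposition~\ref{No_local_maxima} but replacing the ball--averaging argument with parabolic--PDE tools adapted to the caloric function $u(y,t):=(|f|*\varphi_t)(y)$. Suppose $r_0\in D^{\alpha}_{\varphi}(f)$ is a strict local maximum witnessed by $s_0<r_0<t_0$, fix $x_0\in\R^d$ with $|x_0|=r_0$, and set $\Omega:=\{(y,t):t>0,\,|y-x_0|<\alpha\sqrt{t}\}$. First I would show that the supremum defining $M^{\alpha}_{\varphi}f(x_0)$ is attained at some $(y^*,t^*)$ with $t^*>0$: along admissible pairs one has $y\to x_0$ and $u(y,t)\to |f(x_0)|$ as $t\to 0^+$ by continuity of $f$ at $x_0\neq 0$, while $u(y,t)\le \|f\|_{L^1}(4\pi t)^{-d/2}\to 0$ as $t\to\infty$; since $x_0\in\mathcal{D}^{\alpha}_{\varphi}(f)$ gives $M^{\alpha}_{\varphi}f(x_0)>|f(x_0)|\ge 0$, the supremum must be achieved in the interior of the admissibility region.

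The crucial next step is a translation trick. For every $x\in\R^d$, the pair $(y^*+(x-x_0),\,t^*)$ is admissible for $x$, since $|y^*+(x-x_0)-x|=|y^*-x_0|\le\alpha\sqrt{t^*}$. Defining $v(x):=u(y^*+(x-x_0),t^*)$, this yields $M^{\alpha}_{\varphi}f(x)\ge v(x)$ on all of $\R^d$, with equality at $x_0$. Combined with the bound $M^{\alpha}_{\varphi}f(x)\le M^{\alpha}_{\varphi}f(x_0)=v(x_0)$ on the full--dimensional annulus $\{s_0\le |x|\le t_0\}$ (which follows from the strict local maximum assumption and the radial symmetry of $M^{\alpha}_{\varphi}f$), this shows that $v$ has a local maximum at the interior point $x_0$. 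Since $v$ is smooth (indeed real--analytic, as a translate of a heat convolution at positive time), this forces $\nabla v(x_0)=\nabla_y u(y^*,t^*)=0$.

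I would then close by a parabolic dichotomy on the location of $(y^*,t^*)$. In Case A, $(y^*,t^*)$ lies in the interior of $\Omega$ (i.e.\ $|y^*-x_0|<\alpha\sqrt{t^*}$); since $u$ is caloric on $\R^d\times(0,\infty)$ and attains its supremum over $\overline{\Omega}$ at this interior point, the strong parabolic maximum principle forces $u\equiv u(y^*,t^*)$ on the connected set $\Omega\cap\{t\le t^*\}$, and letting $(y,t)\to(x_0,0^+)$ inside this set yields $|f(x_0)|=M^{\alpha}_{\varphi}f(x_0)$, contradicting $x_0\in\mathcal{D}^{\alpha}_{\varphi}(f)$. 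In Case B, $|y^*-x_0|=\alpha\sqrt{t^*}$ and $(y^*,t^*)$ sits on the smooth lateral boundary of $\Omega$; provided $|f|\not\equiv 0$, the function $u$ is non--constant on $\Omega$ (otherwise $u\equiv 0$ by the decay at infinity, contradicting $M^{\alpha}_{\varphi}f(x_0)>0$), and the parabolic Hopf lemma applied at $(y^*,t^*)$ then yields $\partial_\nu u(y^*,t^*)>0$, where $\nu=(y^*-x_0)/|y^*-x_0|$ is the outward spatial normal. But this quantity equals $\nabla_y u(y^*,t^*)\cdot \nu$, which is $0$ by the previous paragraph, a contradiction.

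I expect Case B to be the delicate step. A direct sub--mean--value argument in the spirit of Proposition~\ref{No_local_maxima} breaks down here because the Gaussian $\varphi_{t^*}$ has global support, so one cannot localise the strict inequality $|f|<M^{\alpha}_{\varphi}f$ from the annulus $[s_0,t_0]$ to control the full convolution $(|f|*\varphi_{t^*})(y^*)$. Replacing the ball average by the Hopf lemma on the paraboloidal lateral boundary of $\Omega$ is what extracts the caloric structure of $u$; the one technical verification needed is the interior parabolic contact condition at lateral boundary points $(y^*,t^*)$ with $t^*>0$, which should be available because the surface $\{|y-x_0|^2=\alpha^2 t\}$ is real--analytic and an appropriate inward parabolic region can be fit inside $\Omega$ touching the boundary there.
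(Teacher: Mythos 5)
Your proof is correct in its essentials but takes a genuinely different route from the paper's. The paper reduces the $\alpha>0$ case to the centered case $\alpha=0$: writing $M^{\alpha}_{\varphi}f(x_0)=u(z_0,t)$ for some admissible pair $(z_0,t)$, it observes that $(z_0,t)$ remains admissible at every $y\in\overline{B_{\alpha\sqrt{t}}(z_0)}$, forcing $M^{\alpha}_{\varphi}f\equiv M^{\alpha}_{\varphi}f(x_0)$ on that whole ball; in particular $M^{\alpha}_{\varphi}f(z_0)=M^{0}_{\varphi}f(z_0)=u(z_0,t)>f(z_0)$, so $|z_0|$ becomes a strict local maximum of the \emph{centered} operator in its disconnecting set, contradicting the subharmonicity of $M^{0}_{\varphi}f$ there established in \cite[Lemma 8]{CS}. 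You stay directly at $\alpha>0$ and exploit the caloric structure of $u$ on the parabolic cone $\Omega$ via the strong maximum principle and the Hopf boundary-point lemma, after first turning translated admissibility into the first-order information $\nabla_y u(y^*,t^*)=0$. Each route buys something: the paper's is shorter and leverages an existing centered fact, while yours is self-contained (no appeal to \cite[Lemma 8]{CS}) and makes the PDE mechanism explicit, which might transfer to other parabolic kernels. Two points in your write-up should be tightened. First, state the dichotomy as ``some maximizer is interior'' versus ``every maximizer lies on the lateral boundary,'' so that in Case~B you genuinely have $u<M$ strictly in $\Omega$: the Hopf lemma needs this, and mere non-constancy of $u$ on $\Omega$ does not deliver it, since an interior maximizer would make $u$ constant only for $t\le t^*$ and not on all of $\Omega$. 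This is not a fatal gap (an interior maximizer is already dispatched by Case~A), but the phrasing conflates the two conditions. Second, the application of the parabolic boundary-point lemma at a lateral point $(y^*,t^*)$ of the paraboloid — which you flag as the technical point — must be made for the purely spatial outward direction $\nu=(y^*-x_0)/|y^*-x_0|$; this is legitimate because $\nu$ makes an acute angle with the outward normal $(2(y^*-x_0),-\alpha^2)$ of an interior touching ball (whose center has spatial component different from $y^*$, as $|y^*-x_0|=\alpha\sqrt{t^*}>0$), but that verification belongs in the proof rather than in a closing remark.
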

\begin{proof}
Assume there is a point $r_0 \in D_{\varphi}^{\alpha}(f)$ for which there exist $s_0$ and $t_0$ with $s_0 < r_0 < t_0$, $[s_0,t_0] \subset D_{\varphi}^{\alpha}(f)$, such that $M^{\alpha}_{\varphi} f(r) \leq M^{\alpha}_{\varphi} f(r_0)$ for all $r \in [s_0,t_0]$ and $M^{\alpha}_{\varphi} f(s_0), M^{\alpha}_{\varphi} f(t_0) < M^{\alpha}_{\varphi} f(r_0)$. Let $x_0 \in \R^d$ be such that $|x_0| = r_0$. Assume that $M^{\alpha}_{\varphi} f(x_0) = f*{\varphi}_t(z_0)$ with $|z_0 - x_0| \leq \alpha \sqrt{t}$. For any $y \in \overline{B_{\alpha \sqrt{t}}(z_0)}$ note that the pair $(z_0, t)$ is an admissible choice for the maximal function $M^{\alpha}_{\varphi} f$ at $y$, hence $M^{\alpha}_{\varphi} f(y) \geq M^{\alpha}_{\varphi} f(x_0)$. Since $x_0$ is a strict local maximum, in our setup we must then have $\big\{|y|\,:\, y \in \overline{B_{\alpha \sqrt{t}}(z_0)}\big\} \subset [s_0,t_0]$ and $M^{\alpha}_{\varphi} f(y) = M^{\alpha}_{\varphi} f(x_0) = f*{\varphi}_t(z_0)$ for such $y$. In particular this implies that $z_0 \neq 0$ and that $M^{\alpha}_{\varphi} f(z_0) = M^{0}_{\varphi} f(z_0) = f*{\varphi}_t(z_0) > f(z_0)$. Hence $|z_0|$ is a strict local maximum of  $M^{0}_{\varphi} f(r)$ in the disconnecting set $D^{0}_{\varphi}(f)$. This contradicts \cite[Lemma 8]{CS}, i.e. the fact that $M^{0}_{\varphi} f$ is subharmonic in the disconnecting set (which is the case $\alpha = 0$ of this proposition). Note that \cite[Lemma 8]{CS} is originally stated for continuous functions $f$ but its proof only uses such continuity in a neighborhood of $z_0$ whose closure is contained in the disconnecting set $\mathcal{D}^{0}_{\varphi}(f)$ (which serves our purposes here).
\end{proof}

\subsubsection{Proof of Theorem \ref{Thm_non_tang_heat_flow}: boundedness} Once we have \eqref{20200728_10:00} and Proposition \ref{Prop_no_max_heat_flow} in our hands, the proof of the boundedness follows the exact same outline with three steps of \S \ref{Bound_NTHL} (in Step 1, one would think of the time $t$ being bounded by below).

\smallskip

Having gone through the three steps above and established the gradient bound, it will be useful to take a closer look at the second step, for it provides, as a corollary, a local estimate that will imply our desired property (P4). Let $\rho >0$ and write
$$D^{\alpha}_{\varphi}(f) \cap (0,\rho) = \bigcup_{i=1}^{\infty} (a_i, b_i).$$
For each $i$, let $\tau_i \in [a,b]$ be a point of minimum of $M^{\alpha}_{\varphi} f$ is such interval (then $M^{\alpha}_{\varphi} f$ is non-increasing in $[a_i,\tau_i]$ and non-decreasing in $[\tau_i, b_i]$). Assuming for a moment that $b_i \neq \rho$, using integration by parts we get
\begin{align}\label{20200728_10:53}
 \int_{a_i}^{b_i} & \big| \big(M^{\alpha}_{\varphi} f\big)'(r)\big| \,r^{d-1}\,\d r = - \int_{a_i}^{\tau_i}  \big(M^{\alpha}_{\varphi} f\big)'(r) \,r^{d-1} \,\d r + \int_{\tau_i}^{b_i}  \big(M^{\alpha}_{\varphi} f\big)'(r)\,r^{d-1}\,\d r  \nonumber\\
& = M^{\alpha}_{\varphi} f(a_i)\,{a_i}^{d-1} + M^{\alpha}_{\varphi} f(b_i)\,{b_i}^{d-1} - 2M^{\alpha}_{\varphi} f(\tau_i)\,\tau_{i}^{d-1}   \nonumber\\
& \ \ \ \ \ \ \ \ \ \ \ + (d-1) \int_{a_i}^{\tau_i}  M^{\alpha}_{\varphi} f(r) \,r^{d-2}\,\d r - (d-1)\int_{\tau_1}^{b_i}  M^{\alpha}_{\varphi} f(r) \,r^{d-2}\,\d r  \nonumber\\
& \lesssim_{d,\alpha} f(a_i)\,{a_i}^{d-1} + f(b_i)\,{b_i}^{d-1} - 2f(\tau_i)\,\tau_i^{d-1}\\
& \ \ \ \ \ \ \ \ \ \ \ + (d-1) \int_{a_i}^{\tau_i}  \wM f(r) \,r^{d-2}\,\d r - (d-1)\int_{\tau_i}^{b_i}  f(r) \,r^{d-2}\,\d r  \nonumber\\
& = f(a_i)\,{a_i}^{d-1} - f(\tau_i)\,\tau_i^{d-1}  + (d-1) \int_{a_i}^{\tau_i}  \wM f(r) \,r^{d-2}\,\d r  + \int_{\tau_i}^{b_i}  f'(r) \,r^{d-1}\,\d r  \nonumber\\
&\leq (d-1) \int_{a_i}^{\tau_i}  \wM f(r) \,r^{d-2}\,\d r  + \int_{a_i}^{b_i}  |f'(r)| \,r^{d-1}\,\d r. \nonumber
\end{align}
The last inequality holds since
$$f(a_i)\,{a_i}^{d-1} - f(\tau_i)\,\tau_i^{d-1} \leq - \int_{a_i}^{\tau_i} f'(r)\, r^{d-1}\,\d r \leq \int_{a_i}^{\tau_i} |f'(r)|\, r^{d-1}\,\d r.$$
From \eqref{20200728_10:00} and \eqref{20200706_10:24} note that there is no issue in \eqref{20200728_10:53} if $a_i =0$. If $b_i = \rho$, the inequality \eqref{20200728_10:53} continues to hold if we add a term $\wM(\rho)\, \rho^{d-1} - f(\rho)\, \rho^{d-1}$ on the right hand-side. If we sum over all intervals (and take also the connecting set into consideration) we arrive at 
\begin{equation}\label{20200728_11:12}
\int_0^{\rho} \big| \big(M^{\alpha}_{\varphi} f\big)'(r)\big| \,r^{d-1}\,\d r \lesssim_{d,\alpha} \int_{0}^{\rho}  |f'(r)| \,r^{d-1}\,\d r + \int_{0}^{\rho}  \wM f(r) \,r^{d-2}\,\d r + \wM(\rho)\, \rho^{d-1}.
\end{equation}
On the other hand, a similar computation to \eqref{20200706_10:01} yields
\begin{align}\label{20200728_11:13}
\begin{split}
\int_{0}^{\rho}  \wM f(r) \,r^{d-2}\,\d r &= \int_{0}^{\rho} \left( \wM f(\rho) - \int_{r}^{\rho} \big( \wM f\big)'(t)\,\d t \right) r^{d-2}\,\d r\\
& \lesssim_d \wM f(\rho) \,\rho^{d-1} + \int_0^{\rho} \big|\big( \wM f\big)'(t)\big|\,t^{d-1}\,\d t.
\end{split}
\end{align}
Combining \eqref{20200728_11:12} and \eqref{20200728_11:13} we arrive at
\begin{equation*}
\int_0^{\rho} \big| \big(M^{\alpha}_{\varphi} f\big)'(r)\big| \,r^{d-1}\,\d r \lesssim_{d,\alpha} \int_{0}^{\rho}  |f'(r)| \,r^{d-1}\,\d r + \int_0^{\rho} \big|\big( \wM f\big)'(r)\big|\,r^{d-1}\,\d r + \wM(\rho)\, \rho^{d-1}.
\end{equation*}
Observe that this estimate, combined with Proposition \ref{Prop_control_origin}, plainly yields the analogue of Proposition \ref{Prop_control_origin} for the non-tangential operators $M^{\alpha}_{\varphi}$. This is property (P4) in our to-do list (which is not needed for the case $d=1$).

\subsubsection{Proof of Theorem \ref{Thm_non_tang_heat_flow}: continuity} We have already established properties (P1) and (P4) of our sunrise strategy outlined in Section \ref{Sec_strategy}. Property (P2) follows pretty much as in Proposition \ref{Prop_pointwise_conv}, using \eqref{20200728_10:00} and the sublinearity of $M^{\alpha}_{\varphi} f$ for the convergences at the function level, and verifying that one can move the gradient inside the integral as in Proposition \ref{Prop_derivative_inside} in the disconnecting set. The sunrise construction will be identical to Section \ref{Sunrise_sec} when $d\geq 2$ (and over $I = \R$ when $d=1$) and one shall use \eqref{20200728_10:00} to prove the analogue of \eqref{20200714_10:04} (property (P5)). The proof will be complete once we establish property (P3). This is the content of our final proposition (which also holds for $d=1$ and $I=\R$ with the same reasoning).

\begin{proposition}\label{Det_Heat_Flat}
Let $\alpha > 0$, $d\geq 2$ and $f \in W^{1,1}_{\rm rad}(\R^d)$. Let $r_0>0$ be a point of differentiability of $f(r)$ such that $f'(r_0) \neq 0$. Then $r_0 \in D_{\varphi}^{\alpha}(f)$.
\end{proposition}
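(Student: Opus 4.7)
The plan is to follow the template of Proposition \ref{Prop_discon_} and exhibit, for some small $h > 0$, an admissible pair in \eqref{20200727_11:41} at $x_0 := (r_0, 0, \ldots, 0)$ for which the heat average strictly exceeds $f(x_0)$. Assume without loss of generality that $c := f'(r_0) > 0$; the case $c < 0$ is dispatched by reflecting the direction below. Set $z_h := (r_0 + \alpha h, 0, \ldots, 0)$ and $t_h := h^2$, so that $|z_h - x_0| = \alpha h = \alpha \sqrt{t_h}$ and the pair $(z_h, t_h)$ is admissible at $x_0$. After the standard rescaling $y = z_h - hu$ in the heat convolution, it suffices to show
\begin{equation*}
I(h) \;:=\; \int_{\R^d} f(z_h - hu)\,\varphi_1(u)\,du \;>\; f(x_0)
\end{equation*}
for some $h > 0$ sufficiently small.

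Splitting $I(h) - f(x_0)$ according to $|u| \leq R$ or $|u| > R$, for a large parameter $R$, I first address the inner region. Writing $y = z_h - hu = x_0 + hw$ with $w := (\alpha - u_1, -u_2, \ldots, -u_d)$ one has
\begin{equation*}
|y| \;=\; \sqrt{r_0^2 + 2 r_0 h w_1 + h^2 |w|^2} \;=\; r_0 + h w_1 + O_R(h^2) \qquad (|u| \leq R,\ h \ \text{small}).
\end{equation*}
The differentiability of $f$ at $r_0$, namely $f(r) - f(r_0) = c(r - r_0) + o(|r - r_0|)$ as $r \to r_0$, together with the symmetry cancellation $\int u_1\,\varphi_1(u)\,du = 0$, then yields
\begin{equation*}
\int_{|u|\leq R} \!\big(f(z_h - hu) - f(x_0)\big)\,\varphi_1(u)\,du \;=\; c\alpha h\,(1 - \sigma_R) \;+\; O_R(h^2) \;+\; o_R(h),
\end{equation*}
where $\sigma_R := \int_{|u| > R} \varphi_1(u)\,du \to 0$ as $R \to \infty$, and the last error term can be made smaller than any prescribed $\eta h$ by taking $h$ small depending on $R$ and $\eta$. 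On the tail, the pointwise bound $\varphi_1(u) \leq 2^{d/2} e^{-R^2/8}\,\varphi_2(u)$ for $|u| > R$, followed by a change of variables, converts the tail integral into a multiple of $e^{-R^2/8}(f * \varphi_{2h^2})(z_h)$; invoking \eqref{20200728_10:00} and the local boundedness of $\wM f$ near $x_0 \in \R^d \setminus \{0\}$, this gives
\begin{equation*}
\bigg|\int_{|u|>R} \!\big(f(z_h - hu) - f(x_0)\big)\,\varphi_1(u)\,du\bigg| \;\leq\; C_{x_0}\,e^{-R^2/8}
\end{equation*}
uniformly for $h$ small.

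The main subtlety is the delicate bookkeeping of scales: the tail error $C_{x_0} e^{-R^2/8}$ is independent of $h$ while the principal contribution $c\alpha h$ vanishes with $h$, so ``fix $R$ large, then send $h \to 0$'' fails at face value. The rescue is that the Taylor-type estimates on the inner region require only $h \lesssim_R R^{-2}$ for validity, while to dominate the tail it is enough that $h \gtrsim_{x_0} e^{-R^2/8}$; for $R$ sufficiently large these two constraints define a non-empty window since $e^{-R^2/8} \ll R^{-2}$. Picking $R$ large and then $h$ inside this window yields $I(h) - f(x_0) \geq \tfrac{1}{2} c\alpha h > 0$, establishing $r_0 \in D_\varphi^{\alpha}(f)$. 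The case $c < 0$ is handled identically with $z_h := (r_0 - \alpha h, 0, \ldots, 0)$.
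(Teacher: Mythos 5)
Your proof is correct, and it reaches the conclusion by a genuinely different execution of the same general Taylor/split idea. The two main departures from the paper are: (1) your tail estimate uses the pointwise bound $\varphi_1(u) \leq 2^{d/2} e^{-R^2/8}\varphi_2(u)$ for $|u|>R$, converts the tail into a constant times $e^{-R^2/8}(f*\varphi_{2h^2})(z_h)$, and then controls it by \eqref{20200728_10:00} together with the local boundedness of $\wM f$ away from the origin, whereas the paper uses H\"{o}lder's inequality against the $L^{d/(d-1)}$ norm of $f$ (available by Sobolev embedding); and (2) the paper resolves the ``scales don't match'' tension by letting the cutoff depend on $t$, taking $N=(t\varepsilon)^{-1/8}$ so that both $N\to\infty$ and $N\sqrt{t}\to 0$ simultaneously, so the whole inequality scales like $\sqrt{t}$ and holds for all small $t$, while you fix the (rescaled) cutoff $R$ and choose $h$ in a nonempty window $\big(c_1 e^{-R^2/8},\, c_2/R\big)$. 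Your window argument is sound, but the bookkeeping could be phrased more carefully: the crucial point is that the precision $\delta$ in the first-order Taylor estimate can be chosen independently of $R$ (because the inner error $\int_{|u|\le R}\delta\,\|z_h-hu|-r_0|\,\varphi_1(u)\,du$ is $\lesssim \delta h(\alpha+1)$, \emph{not} $\delta h(\alpha+R)$, thanks to the Gaussian weight), so the upper end of the $h$-window decays only like $\rho(\delta)/(\alpha+R)\sim R^{-1}$ with a constant independent of $R$. Writing $h\lesssim_R R^{-2}$ with an unspecified $R$-dependent constant obscures this and, taken literally, would not by itself guarantee a nonempty window against $e^{-R^2/8}$; it would be cleaner to make explicit that the constant in the upper bound on $h$ does not itself degenerate in $R$. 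Both tail estimates and both cutoff strategies are valid, and neither is obviously preferable; the paper's choice $N=N(t)$ keeps everything on a single scale, while your fixed-$R$-plus-window approach avoids the Sobolev embedding altogether.
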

\begin{proof}
The proof here is similar in spirit to the proof of Proposition \ref{Prop_discon_}, but technically slightly more involved. We first consider the case $f'(r_0) = c >0$ and let $x_0 := (r_0, 0, \ldots, 0) \in \R^d$. Fix $\varepsilon >0$ small (say, with $ \varepsilon < \min\{c,1\} $ to begin with).  Then we have 
\begin{equation}\label{20200730_13:42}
f(r_0 + s) \geq f(r_0) + (c - \varepsilon)s  \ \ {\rm and} \ \ f(r_0 - s) \geq f(r_0) - (c + \varepsilon)s
\end{equation}
for $|s| \leq s_0(\varepsilon)$. 

\smallskip

For $t <1$ small we set $N := (t \varepsilon)^{-1/8}$ and consider the cube $Q_t$ of center at the origin and side $2N\sqrt{t}$ (with sides parallel to the usual axes). We let $z_0 = (r_0 + \alpha\sqrt{t}, 0, 0 , \ldots, 0)$ and we want to show that $f*\varphi_t(z_0) > f(x_0)$ when $t$ and $\varepsilon$ are small enough (note that we are trying to place the mass of the heat kernel ``to the right" of $r_0$). Since the heat kernel is radial we may write
\begin{align*}
f*\varphi_t(z_0) - f(x_0) = \int_{\R^d} \big(f(z_0 +y) - f(x_0)\big) \, \varphi_t(y)\,\d y = \int_{Q_t} + \ \int_{Q_t^c} =: (I) + (II).
\end{align*}
We first verify that the integral $(II)$ is small. By the Sobolev embedding, recall that $f \in L^{d/(d-1)}(\R^d)$. Observe also that 
\begin{align}\label{20200730_21:05}
\begin{split}
\int_{Q_t^c}  & \varphi_t(y)^d \,\d y \leq \int_{|y| \geq N\sqrt{t}}\ \varphi_t(y)^d\,\d y  = \omega_{d-1} \int_{N\sqrt{t}}^{\infty} \frac{s^{d-1}}{(4 \pi t)^{d^2/2}} \, e^{\frac{-d\, s^2}{4t}}\d s  \\
& = \frac{\omega_{d-1}}{t^{d(d-1)/2}} \int_{N}^{\infty} \frac{u^{d-1}}{(4 \pi)^{d^2/2}} \, e^{\frac{-d\, u^2}{4}}\d u \lesssim_d \frac{e^{\frac{-d N^2}{8}}}{t^{d(d-1)/2}} \lesssim_d \frac{N^{4d(-d(d-1)-1)}}{t^{d(d-1)/2}} \leq \big(\varepsilon \sqrt{t}\big)^d.
\end{split}
\end{align}
Hence, using H\"{o}lder's inequality we get 
\begin{align}\label{20200730_13:05}
\int_{Q_t^c}f(z_0 +y)  \, \varphi_t(y)\,\d y \leq \|f\|_{L^{d/(d-1)}(\R^d)} \left(\int_{Q_t^c} \varphi_t(y)^d \,\d y \right)^{1/d} \lesssim_d \|f\|_{L^{d/(d-1)}(\R^d)} \ \varepsilon \sqrt{t}.
\end{align}
Similarly, one can show that
\begin{align}\label{20200730_13:08}
\int_{Q_t^c}  f(x_0) \,  \varphi_t(y)\,\d y  \lesssim_d f(x_0) \, \varepsilon \sqrt{t}.
\end{align}
Combining \eqref{20200730_13:05} and \eqref{20200730_13:08} we arrive at 
\begin{equation}\label{20200730_21:13}
(II) = \varepsilon \sqrt{t} \, O(1),
\end{equation}
where the implicit constant depends only on $d, \|f\|_{L^{d/(d-1)}(\R^d)}$ and $f(x_0)$. 

\smallskip

We then move to the analysis of the term $(I)$. Let $Q_t = Q_t^{+} \cup Q_t^-$, where $Q_t^+ = \{y \in Q_t\, : \, |z_0 + y| \geq r_0\}$ and $Q_t^- = \{y \in Q_t\, : \, |z_0 + y| < r_0\}$. Assume $t$ is sufficiently small so that $\big||z_0 + y| - r_0\big| \leq s_0(\varepsilon)$ for all $y \in Q_t$. Then, letting $y = (y_1, y_2, \ldots, y_d)$, using \eqref{20200730_13:42} and the fact that $|z_0+y| \geq r_0 + \alpha \sqrt{t} + y_1$, we get
\begin{align}\label{20200730_14:14}
& \int_{Q_t}  \big(f(z_0 +y) - f(x_0)\big) \, \varphi_t(y)\,\d y = \int_{Q_t^+} \, + \ \int_{Q_t^-} \nonumber \\
& \geq  \int_{Q_t^+} (c - \varepsilon) \big(|z_0 +y| - r_0\big)\, \varphi_t(y)\,\d y  - \int_{Q_t^-} (c + \varepsilon) \big(-|z_0 +y| + r_0\big)\, \varphi_t(y)\,\d y \nonumber \\
& \geq \! \int_{Q_t^+} (c - \varepsilon) \big((r_0 + \alpha \sqrt{t} + y_1) - r_0\big) \varphi_t(y)\,\d y -  \int_{Q_t^-} (c + \varepsilon) \big(\!-(r_0 + \alpha \sqrt{t} + y_1) + r_0\big) \varphi_t(y)\,\d y\\
& = c \,\alpha \sqrt{t} \int_{Q_t} \varphi_t(y)\,\d y + \varepsilon \left( - \int_{Q_t^+} (\alpha \sqrt{t} + y_1) \, \varphi_t(y)\,\d y + \int_{Q_t^-} (\alpha \sqrt{t} + y_1) \, \varphi_t(y)\,\d y \right). \nonumber
\end{align}
Note that we used above the fact that $\int_{Q_t} y_1 \varphi_t(y)\,\d y = 0$, since $\varphi_t$ is even. Proceeding as in \eqref{20200730_21:05} and \eqref{20200730_13:08} we find that 
\begin{equation}\label{20200730_14:15}
1-  O_d(\varepsilon) \leq \int_{Q_t} \varphi_t(y)\,\d y \leq  1.
\end{equation}
and
\begin{align}\label{20200730_14:16}
\int_{Q_t} |y_1| \,\varphi_t(y)\,\d y & \leq  \int_{-N \sqrt{t}}^{N \sqrt{t}} \frac{|y_1|}{(4 \pi t)^{1/2}} \, e^{\frac{-y_1^2}{4t}}\,\d y_1  = \sqrt{t} \int_{-N }^{N} \frac{|u|}{(4 \pi)^{1/2}} \, e^{\frac{-u^2}{4}}\d u \lesssim \sqrt{t}.
\end{align}
Using \eqref{20200730_14:15} and \eqref{20200730_14:16} in \eqref{20200730_14:14} we arrive at 
\begin{equation}\label{20200730_21:14}
(I) = \int_{Q_t}  \big(f(z_0 +y) - f(x_0)\big) \, \varphi_t(y)\,\d y \geq  \sqrt{t} \, \Big( c \,\alpha \big(1 - O_d(\varepsilon)\big) - \varepsilon \big(\alpha + O(1)\big) \Big).
\end{equation}
Note that the work in \eqref{20200730_21:13} and \eqref{20200730_21:14} had the intention of leaving things in the same scale $\sqrt{t}$. Combining \eqref{20200730_21:13} and \eqref{20200730_21:14} we arrive at 
\begin{equation}\label{20200730_21:19}
(I) + (II) \geq \sqrt{t} \, \Big( c \,\alpha \big(1 - O_d(\varepsilon)\big) - \varepsilon \big(\alpha + O(1)\big) \Big),
\end{equation}
where the implicit constant in the $O(1)$ depends only on $d, \|f\|_{L^{d/(d-1)}(\R^d)}$ and $f(x_0)$. Since $c >0$ and $\alpha >0$, the conclusion is that for our initial choice of $\varepsilon$ sufficiently small we will have \eqref{20200730_21:19} strictly positive, as we wanted.

\smallskip

The case $f'(r_0) = c <0$ follows along the same lines. Given our initial $\varepsilon >0$, we will now choose $z_0 = (r_0 - \alpha\sqrt{t}, 0, 0 , \ldots, 0)$. We start with $t$ small so that $r_0 - \alpha\sqrt{t} - N\sqrt{t} \geq \frac{r_0}{2}$. Then we can go to $t$ even smaller such that for every  $y \in Q_t$ we have
\begin{align*}
|z_0+y| & \leq \Big((r_0 - \alpha \sqrt{t} + y_1)^2 + (d-1)N^2 t\Big)^{1/2} \leq (r_0 - \alpha \sqrt{t} + y_1) + \frac{(d-1)N^2t }{2(r_0 - \alpha \sqrt{t} + y_1)}\\
& \leq (r_0 - \alpha \sqrt{t} + y_1) + \varepsilon \sqrt{t}.
\end{align*}
We use this inequality in the analogue of \eqref{20200730_14:14}.
\end{proof}

\subsection{Concluding remarks}\label{Conc_rem} We briefly comment on the obstructions towards the endpoint $W^{1,1}$--continuity via the sunrise strategy for some maximal operators mentioned, or at least hinted at, in our text (and for which the corresponding boundedness result is already established). The non-tangential Hardy-Littlewood maximal operator $M^{\alpha}$, in the case of dimension $d=1$ and $0 < \alpha < \frac{1}{3}$, does not necessarily verify property (P1) as exemplified in \cite[Theorem 2]{Ra} (think of $f$ being two high bumps far apart). Still in dimension $d=1$, for the centered Hardy-Littlewood maximal operator, on top of obstruction (P1), property (P3) may also not be verified. The centered heat flow maximal function $M^{0}_{\varphi}$ (in dimension $d=1$ for general $f \in W^{1,1}(\R)$ and if $d\geq 2$ for $f \in W^{1,1}_{\rm rad}(\R^d)$) verifies (P1) but does not necessarily verify the flatness property (P3) (just think of $f$ being the Gaussian $\varphi_1$). 

\smallskip

Another standard maximal function of convolution type is the one associated to the Poisson kernel
\begin{equation*}
\Psi_t(x) = \frac{\Gamma \left(\frac{d+1}{2}\right)}{\pi^{(d+1)/2}}\ \frac{t}{(|x|^2 + t^2)^{(d+1)/2}}.
\end{equation*}
Similarly to \eqref{20200727_11:41}, for $\alpha \geq 0$ we may consider 
\begin{equation}\label{20200727_11:41}
M_{\Psi}^{\alpha}f(x) = \sup_{t >0 \, ; \, |y-x|\leq \alpha t} \,(|f| * \Psi_t)(y).
\end{equation}
The boundedness of the map $f \to \big(M_{\Psi}^{\alpha}f\big)'$ from $W^{1,1}(\R) \to L^1(\R)$ was established for $\alpha =0$ in \cite[Theorem 2]{CS} and for $\alpha >0$ in \cite[Theorem 4]{CFS}. When $d\geq 2$ and $\alpha =0$ the boundedness of the map $f \to \nabla M_{\Psi}^{0}f$ from $W^{1,1}_{\rm rad}(\R^d) \to L^1(\R^d)$ was established in \cite[Theorem 1]{CGR}. Following the exact same argument of our Theorem \ref{Thm_non_tang_HL} we can extend this boundedness result in dimension $d\geq 2$ for $\alpha >0$ as well (this has not been recorded in the literature before). In all of the cases above, property (P1) holds; and this is actually an important ingredient in such boundedness proofs. One may be naturally led to think that the analogue of Proposition \ref{Det_Heat_Flat}, i.e. property (P3), would be somewhat reasonable for such an operator, at least in the non-tangential case $\alpha >0$. This turns out to be false. The flatness property (P3) is not necessarily verified for any $\alpha \geq 0$. 

\smallskip

\enlargethispage{0.4\baselineskip}

In dimension $d\geq 2$, it is shown in \cite[\S 5.3]{CFS} that the function
$$f(x) = (1 + |x|^2)^{\frac{-d+1}{2}}$$
is such that $M_{\Psi}^{\alpha}f(x) = f(x)$ for $|x| \leq \frac{1}{\alpha}$. Such $f$ is not in $W^{1,1}(\R^d)$, but we could simply multiply $f$ by a smooth and radially non-increasing function $\phi$ with $\phi(x) = 1$ if $|x| \leq 1$, and $\phi(x) = 0$ if $|x| \geq 2$, that the property $M_{\Psi}^{\alpha}f(x) = f(x)$ would continue to hold in a neighborhood of the origin. In dimension $d=1$ we may consider the function 
\begin{equation*}
f(x) = \log\left( \frac{4 + x^2}{1 + x^2}\right) = 2 \int_1^2 \frac{s}{(s^2 + x^2)} \, \d s = 2\pi \int_1^2 \Psi_s(x) \, \d s
\end{equation*}
This function belongs to $W^{1,1}(\R)$. Using the semigroup property of the Poisson kernel we get
\begin{align*}
v(y,t) & :=  (f * \Psi_t)(y) = 2 \pi \int_{-\infty}^{\infty}\int_1^2 \Psi_s(y-x) \,\Psi_t (x) \, \d s \, \d x =  2 \pi \int_1^2 \int_{-\infty}^{\infty}\Psi_s(y-x) \,\Psi_t (x) \,\d x\, \d s \\
& = 2 \pi \int_1^2 \Psi_{t+s} (y)\,\d s = \log\left( \frac{(t+2)^2 + y^2}{(t+1)^2 + y^2}\right).
\end{align*}
For a fixed $x \in \R$, by the maximum principle (recall that $v$ verifies $\Delta v = 0$ in $\R \times (0,\infty)$), the supremum of $v(y,t)$ in the cone $|y-x| \leq \alpha t$ is attained at a point $y = x \pm \alpha t$. We want to show that, for $x$ in a neighborhood of the origin we have
\begin{equation*}
\log\left( \frac{4 + x^2}{1 + x^2}\right) \geq \log\left( \frac{(t+2)^2 + (x \pm \alpha t)^2}{(t+1)^2 + (x \pm \alpha t)^2}\right)
\end{equation*}
for all $t \geq 0$. After removing the $\log$ and multiplying out, this is equivalent to 
$$t\, (-2 x^2 \pm 6 x\alpha  + 3 \alpha^2 t + 3t + 4) \geq 0\,,$$
which is clearly true if $|x|$ is small.

\section*{Acknowledgments}
E.C. acknowledges support from FAPERJ - Brazil. C.G.R. was supported by CAPES - Brazil.


\begin{thebibliography}{99}

 \bibitem{AP} 
J. M. Aldaz and J. P\'{e}rez L\'{a}zaro, 
\newblock Functions of bounded variation, the derivative of the one dimensional maximal function, and applications to inequalities,
\newblock Trans. Amer. Math. Soc. 359 (2007), no. 5, 2443--2461.

\bibitem{AL}
F. J. Almgren and E. H. Lieb,
\newblock Symmetric decreasing rearrangement is sometimes continuous,
\newblock J. Amer. Math. Soc. 2 (1989), 683--773.

\bibitem{BM}
D. Beltran and J. Madrid,
\newblock Endpoint Sobolev continuity of the fractional maximal function in higher dimensions,
\newblock to appear in Int. Math. Res. Not., preprint at https://arxiv.org/abs/1906.00496.

\bibitem{BM1}
D. Beltran and J. Madrid,
\newblock Regularity of the centered fractional maximal function on radial functions,
\newblock to appear in J. Funct. Anal., preprint at https://arxiv.org/abs/1911.00065.

\bibitem{BOS}
S. Bortz, M. Egert and O. Saari,
\newblock Sobolev contractivity of gradient flow maximal functions,
\newblock preprint at https://arxiv.org/abs/1910.13150.

\bibitem{BL}
H. Brezis and E. Lieb, 
\newblock A relation between pointwise convergence of functions and convergence of functionals, 
\newblock Proc. Amer. Math. Soc. 88 (1983) 486--490.

\bibitem{CFS}
E. Carneiro, R. Finder and M. Sousa, 
\newblock On the variation of maximal operators of convolution type II,
\newblock Rev. Mat. Iberoam. 34 (2018), 739--766.

\bibitem{CGR}
E. Carneiro and C. Gonz\'{a}lez-Riquelme,
\newblock Gradient bounds for radial maximal functions,
\newblock to appear in Ann. Acad. Sci. Fenn. Math., preprint at https://arxiv.org/abs/1906.01487.

\bibitem{CMa}
E. Carneiro and J. Madrid, 
\newblock Derivative bounds for fractional maximal functions,
\newblock Trans. Amer. Math. Soc. 369 (2017), no. 6, 4063--4092.

\bibitem{CMP}
E. Carneiro, J. Madrid and L. B. Pierce,
\newblock Endpoint Sobolev and BV continuity for maximal operators,
\newblock J. Funct. Anal. 273 (2017), 3262-3294.

\bibitem{CS}
E. Carneiro and B. F. Svaiter, 
\newblock On the variation of maximal operators of convolution type,
\newblock J. Funct. Anal. 265 (2013), 837--865.

\bibitem{GR}
C. Gonz\'{a}lez-Riquelme,
\newblock Sobolev regularity of polar fractional maximal functions,
\newblock Nonlinear Anal. 198 (2020), article 111889. 

\bibitem{HM} 
P. Haj\l asz and J. Mal\'{y},
\newblock On approximate differentiability of the maximal function,
\newblock Proc. Amer. Math. Soc. 138 (2010), 165--174.


\bibitem{HO} 
P. Haj\l asz and J. Onninen, 
\newblock On boundedness of maximal functions in Sobolev spaces,
\newblock Ann. Acad. Sci. Fenn. Math. 29 (2004),  no. 1, 167--176.

\bibitem{Ki}
J. Kinnunen, 
\newblock The Hardy-Littlewood maximal function of a Sobolev function,
\newblock Israel J. Math. 100 (1997), 117--124.

\bibitem{KL}
J. Kinnunen and P. Lindqvist, 
\newblock The derivative of the maximal function,
\newblock J. Reine Angew. Math. 503 (1998), 161--167.

\bibitem{KiSa}
J. Kinnunen and E. Saksman,
\newblock Regularity of the fractional maximal function,
\newblock Bull. London Math. Soc. 35 (2003), no. 4, 529--535. 

\bibitem{Ku}
O. Kurka,
\newblock On the variation of the Hardy-Littlewood maximal function,
\newblock Ann. Acad. Sci. Fenn. Math. 40 (2015), 109--133.

\bibitem{LXK}
F. Liu, Q. Xue and K. Yabuta,
\newblock Regularity and continuity of the multilinear strong maximal operators,
\newblock J. Math. Pures Appl. 138 (2020), 204--241.

\bibitem{Lu1}
H. Luiro, 
\newblock Continuity of the maximal operator in Sobolev spaces,
\newblock Proc. Amer. Math. Soc. 135 (2007), no. 1, 243--251.

\bibitem{Lu2}
H. Luiro,
\newblock The variation of the maximal function of a radial function,
\newblock  Ark. Mat. 56 (2018), no. 1, 147--161.

\bibitem{Lu3}
H. Luiro,
\newblock On the continuous and discontinuous maximal operators,
\newblock Nonlinear Anal. 172 (2018), 36--58. 

\bibitem{LM}
H. Luiro and J. Madrid,
\newblock The variation of the fractional maximal function of a radial function,
\newblock Int. Math. Res. Not. 17 (2019), 5284--5298.

\bibitem{M2}
J. Madrid,
\newblock Endpoint Sobolev and BV continuity for maximal operators II,
\newblock Rev. Mat. Iberoam. 35, no 7 (2019) 2151--2168.

\bibitem{PPSS}
C. Pérez, T. Picon, O. Saari and M. Sousa, 
\newblock Regularity of maximal functions on Hardy-Sobolev spaces, 
\newblock Bull. Lond. Math. 50 (2018), no. 6, 1007--1015.   

\bibitem{Ra}
J. P. Ramos,
\newblock Sharp total variation results for maximal functions,
\newblock Ann. Acad. Sci. Fenn. Math. 44 (2019), 41--64.

\bibitem{Saa}
O. Saari,
\newblock Poincar\'{e} inequalities for the maximal function,
\newblock Ann. Sc. Norm. Super. Pisa Cl. Sci. (5) 19 (2019), 1065--1083.

\bibitem{SteinWeiss} 
E. Stein and G. Weiss, 
\newblock {\it Introduction to Fourier Analysis on Euclidean Spaces},
\newblock Princeton University Press, 1971. 

\bibitem{Ta}
H. Tanaka, 
\newblock A remark on the derivative of the one-dimensional Hardy-Littlewood maximal function,
\newblock Bull. Austral. Math. Soc. 65 (2002), no. 2, 253--258.

\bibitem{We1}
J. Weigt,
\newblock Variation of the uncentered maximal characteristic function,
\newblock preprint at https://arxiv.org/abs/2004.10485.

\bibitem{We2}
J. Weigt,
\newblock Variation of the dyadic maximal function,
\newblock preprint at https://arxiv.org/abs/2006.01853.

\end{thebibliography}
\end{document}